\newtheorem{theorem}{Theorem}[section]
\newtheorem{lemm}[theorem]{Lemma}
\newtheorem{prop}[theorem]{Proposition}
\newtheorem{theo}[theorem]{Theorem}
\theoremstyle{definition}
\newtheorem{defi}[theorem]{Definition}
\newtheorem{example}[theorem]{Example}
\newtheorem{coro}[theorem]{Corollary}
\theoremstyle{remark}
\newtheorem{remark}[theorem]{Remark}
\numberwithin{equation}{section}
\def \<{\langle}
\def \>{\rangle}
\def \g{{\frak{g}}}
\def \h{{\hbar}}
\def \be{\begin{equation}\label}
	\def \ee{\end{equation}}
\def \bex{\begin{example}\label}
	\def \eex{\end{example}}
\def \bl{\begin{lem}\label}
	\def \el{\end{lem}}
\def \bt{\begin{thm}\label}
	\def \et{\end{thm}}
\def \bp{\begin{prop}\label}
	\def \ep{\end{prop}}
\def \br{\begin{rem}\label}
	\def \er{\end{rem}}
\def \bc{\begin{coro}\label}
	\def \ec{\end{coro}}
\def \bd{\begin{de}\label}
	\def \ed{\end{de}}
\begin{document}
	
	\title[Non-weight modules over generalized Heisenberg-Virasoro algebra of rank two]
	{Non-weight modules over generalized Heisenberg-Virasoro algebra of rank two}
		
		
	\author[WEN]{Yi Wen}
	\address{Department of Mathematics, Shanghai University,
		Shanghai 200444, China}\email{even-02@shu.edu.cn}
	\author[JING]{Naihuan Jing$^\star$}
	\address{Department of Mathematics, North Carolina State University,
		Raleigh, NC 27695, USA}
	\email{jing@ncsu.edu}
	\thanks{$^\star$ N. Jing, Corresponding Author}
	\author[SUN]{Jiancai Sun}
	\address{Department of Mathematics, Shanghai University, and Newtouch Center for Mathematics of Shanghai University,
		Shanghai 200444, China}\email{jcsun@shu.edu.cn}
	\author[ZHANG]{Honglian Zhang}
	\address{Department of Mathematics, Shanghai University, and Newtouch Center for Mathematics of Shanghai University,
		Shanghai 200444, China}\email{hlzhangmath@shu.edu.cn}
	
	\subjclass[2010]{17B05, 17B40, 17B68}
	\date{Revision: September 8, 2025}
	\keywords{Non-weight modules, Block Lie algebra, U($\mathfrak{h}$)-free modules}
	\begin{abstract}
		In this paper, we study a class of non-weight modules over the generalized Heisenberg-Virasoro algebra of rank two $\widetilde{L}(p_1, p_2)$.
We construct a family of irreducible
		$\widetilde{L}(p_1, p_2)$-modules, determine the isomorphism classes and show that these modules exhaust all the $\widetilde{L}(p_1, p_2)$-modules that are free modules of rank one over the Cartan subalgebra.
	\end{abstract}
	
	\keywords{Non-weight modules, U($\mathfrak{h}$)-free modules, Block Lie algebra}
	
	\maketitle
	
	\section{Introduction}
	Studying various types of modules has been an important problem in the development of the representation theory of Lie algebras. For certain types of
Lie algebras and Lie superalgebras with good properties, it is meaningful to classify those modules with special properties. For instance, \cite{B2} and \cite{M} classified irreducible modules of the complex Lie algebra $\mathfrak{sl}_2$. Harish-Chandra modules (irreducible weight modules with finite-dimensional weight spaces) of various Lie algebras were studied for the Virasoro algebra in \cite{BF,M2}, for generalized Virasoro algebras \cite{GLZ2,LZ,S}, for (generalized) Heisenberg-Virasoro algebras in \cite{LG,LZ2}, and for irreducible quasifinite modules of Block type Lie algebras $\mathcal B(p)$ in \cite{CGZ,S2,S3,SXX}.

	Recently, a class of non-weight modules of a Lie algebra (or superalgebra) $\mathfrak g$ have been studied, particularly those modules
for which the universal enveloping algebra $U(\mathfrak{h})$ acts freely, where $\mathfrak{h}$ is the Cartan subalgebra of $\mathfrak g$.
The $U(\mathfrak{h})$-free modules were firstly introduced as quotients of fraction modules for the Virasoro algebra in \cite{GLZ} and for the complex matrix algebra $\mathfrak{sl}_{n+1}$ in \cite{N}. Nilsson \cite{N2} showed that a finite-dimensional simple Lie algebra has nontrivial $U(\mathfrak{h})$-free modules of finite rank if and only if it is of type $A$ or $C$, and he classified the isomorphism classes of
	$U(\mathfrak{h})$-free modules of rank $1$ in type $C$. In \cite{TZ2}, a class of modules $\Omega(\bm{\lambda},\alpha)=\mathbb{C}[\partial_1,\partial_2,...,\partial_n]$ was defined for the Witt
algebra $\mathcal{W}_n$, which turned out to be all possible $\mathcal{W}_n$-modules with free $U(\mathfrak{h})$-modules of rank 1, where $\mathfrak{h}=\mathop{\oplus}^n\limits_{i=1}\mathbb{C}\partial_i$.
Moreover, 
similar non-weight modules for many other Lie algebras and Lie superalgebras were also studied, such as the Kac-Moody algebras in \cite{CTZ}, the centerless Heisenberg-Virasoro algebras $Vir(a, b)$ in \cite{HCS}, the super-Virasoro algebra in \cite{YYX}, the $N = 2$ superconformal algebra in \cite{YYX2}, Cartan type $S$ Lie algebras in \cite{Z} and so on.\par
	The non-weight modules that are free over the Cartan subalgebra are closely related to off-shell field representations, which could be used for describing the symmetry of field theories in mathematical physics, especially in quantum field theory and conformal field theory. In quantum field theory, symmetry operations act on off-shell propagators through representations of Lie algebras.
The structure of non-weight modules is more flexible than traditional weight modules, providing extra degrees of freedom for off-shell fields.
In non-weight modules, the free action of the Cartan subalgebra corresponds to the fact that dynamics do not constrain the degrees of freedom of fields in off-shell representation. In addition, if $\mathbb{C}[\partial_1,\partial_2,...,\partial_n]$ is a non-weight module with free action over the Cartan subalgebra, then the partial derivatives $\partial_1,\partial_2,...,\partial_n$ in off-shell field representations could be regarded as the coordinates of the momentum space, which could be connected to the space-time coordinates by the Fourier transform. Fourier duality reflects the deep connection between algebraic structures and physical contexts.

	The Heisenberg-Virasoro algebra is a central extension of the Lie
	algebra of differential operators on a circle of order at most one. This algebra was generalized to rank two with central extension in \cite{XLT}.
In particular, the Heisenberg-Virasoro algebra of rank two contains a Virasoro-like subalgebra introduced by Kirkman, Procesi, and Small in \cite{KPS}. By generalizing the Witt algebra, Block introduced
a simple infinite dimensional Lie algebra in 1958 \cite{B} to study modular Lie algebras. The Block type Lie algebra $\mathcal B(p)$ is a certain algebra generalizing the original Block algebra.\par
	In \cite{GWL} and \cite{XSX}, modules with free $U(\mathfrak{h})$ action for the Block type algebra $\mathcal{B}(p)$ and more general Lie algebra $\mathcal{B}(p_1, p_2)$ are studied. In this paper, we will introduce generalized rank two (centerless) Heisenberg-Virasoro algebras $L(p_1, p_2)$ (see \eqref{e:L}), which contain $\mathcal{B}(p_1, p_2)$ as subalgebras.
Then we will study non-weight modules over the algebra $L(p_1, p_2)$ by constructing certain families of $L(p_1, p_2)$-modules. We
	 classify their isomorphism classes, moreover, we will show that they are essentially all the non-weight modules that are free over $U(\mathfrak h)$ of rank 1 under restriction.
	
	Throughout the paper, $\mathbb{Z}, \mathbb{Z}^\ast, \mathbb{N}, \mathbb{C}$ and $\mathbb{C}^{\times}$ will denote the sets of integers, non-zero integers, nonnegative integers, complex numbers, and non-zero complex numbers, respectively. All vector spaces and algebras are assumed to be over $\mathbb{C}$. This paper is organized as
	follows: In Section 2, we introduce some basic definitions and results and give the module structure of the $U(\mathfrak{h})$-free modules of $\widetilde{L}(p_1, p_2)$ (see \eqref{2.13}). In addition, we determine the isomorphism classes and irreducibilities of these modules. In Section 3, we show that these modules exhaust all $U(\mathfrak{h})$-free $\widetilde{L}(p_1, p_2)$-modules of rank 1.\par
	
\section{The algebra $\widetilde{L}(p_1, p_2)$ and the module $\Omega(\bf{\lambda}, \bf{\alpha})$}

\begin{defi}
		For fixed $\textbf{p}\in\mathbb C^2$, the {\it generalized Heisenberg-Virasoro algebra $L(p_1,p_2)$ of rank two without center} is the Lie algebra over $\mathbb{C}$ spanned by $t^{\textbf{m}}, E(\textbf{m})$ subject to the relations:
		\begin{equation}\label{e:L}
			\begin{aligned}
				\, [t^{\textbf{m}_1}, t^{\textbf{m}_2}]&=0, \\
				[t^{\textbf{m}_1},E(\textbf{m}_2)]&=-|\textbf m_1+\textbf p, \textbf m_2+\textbf p|
				t^{\textbf{m}_1+\textbf{m}_2},\\
				[E(\textbf{m}_1),E(\textbf{m}_2)]&=
				-|\textbf m_1+\textbf p, \textbf m_2+\textbf p| E(\textbf{m}_1+\textbf{m}_2),
			\end{aligned}
		\end{equation}
	where $\textbf{m}_i=(m_{i1}, m_{i2})\in\mathbb Z^2\in\mathbb{C}^2$ and $|\textbf m_1+\textbf p, \textbf m_2+\textbf p|=(m_{11}+p_1)(m_{22}+p_2)-(m_{12}+p_2)(m_{21}+p_1) $
	is the determinant of the $2\times 2$-matrix with row vectors $\textbf m_1+\textbf p$ and $\textbf m_2+\textbf p$. The Block type algebra $\mathcal B(p_1, p_2)$ spanned by the skew derivations $E(\textbf{m})$ is a subalgebra of $L(p_1,p_2)$.
	\end{defi}
Next let's take the degree derivations $\partial_1,\partial_2$ into account:\par
Let $\widetilde L(p_1, p_2)=<L(p_1, p_2), \mathbb{C}\partial_1, \mathbb{C}\partial_2>$, where $\partial_1,\partial_2$ are the degree derivations defined by
\begin{equation}
	\begin{aligned}\label{2.13}  
		[\partial_i,E(\textbf{m})]&=m_iE(\textbf{m}),  i=1, 2, \\
		[\partial_i,t^\textbf{m}]&=(m_i+p_i)t^\textbf{m}, i=1, 2, \\
		[\partial_1,\partial_2]&=0,
	\end{aligned}
\end{equation}
then $\widetilde{L}(p_1, p_2)$ is a Lie algebra. Similarly, $\widetilde{\mathcal B}(p_1, p_2)=<\mathcal{B}(p_1, p_2), \partial_1, \partial_2>$ is a Lie subalgebra of $\widetilde L(p_1, p_2)$.
\begin{remark} \label{label4} The Cartan subalgebra of $\widetilde{L}(p_1, p_2)$ is $\mathbb{C}$-span of $\partial_1, \partial_2$, and $E(\textbf{0})$, also it is clear that

	(1) When $p_1=p_2=0$, the algebra $L(0,0)$ is isomorphic to the {\it Heisenberg-Virasoro algebra	of rank two without centers} \cite{LTW,TWX}; $\widetilde{\mathcal B}(0,0)$ is isomorphic to the \emph{Virasoro-like algebra} $\mathcal{L}$ in \cite{WT}.
	
	(2) For fixed $\textbf p=(p_1,p_2)\in\mathbb C^2$, the algebra $L(p_1, p_2)$ can be realized as the Lie algebra of
	generalized power elements $t^{\textbf m}=t_1^{m_1+p_1}t_2^{m_2+p_2}$ (${\textbf m}\in\mathbb Z^2$)  and skew derivations $E(\textbf{m})=t_1^{m_1}t_2^{m_2}[(m_2+p_2)(\partial_1+p_1)-(m_1+p_1)(\partial_2+p_2)]$ (${\textbf m}\in\mathbb Z^2$). In particular, $E(\textbf{0})=p_2\partial_1-p_1\partial_2$, so the Cartan subalgebra of $\widetilde{L}(p_1, p_2)$ is $\mathfrak{h}=\mathbb{C}\partial_1\oplus\mathbb{C}\partial_2$.\par
	
	(3) There exists a degree derivation $D$ of $\widetilde{L}(p,q)$, it satisfies that $D(E(\textbf{m}))=0, D(t^{\textbf{m}})=t^{\textbf{m}}$ for all $\textbf{m}\in\mathbb{Z}^2$, and $[D(\partial_i),t^{\textbf{m}}]=p_it^{\textbf{m}}, [D(\partial_i),E(\textbf{m})]=0,i=1,2$, especially, $[\partial_i-D(\partial_i),t^{\textbf{m}}]=m_it^{\textbf{m}}, [\partial_i-D(\partial_i),E(\textbf{m})]=m_iE(\textbf{m}),i=1,2$. But $D$ is an outer derivation and does not belong to the algebra $\widetilde{L}(p_1, p_2)$.
\end{remark}
Let $\mathbb{C}[\partial_1,\partial_2]$ be the polynomial algebra in variables $\partial_1,\partial_2$. We now define an action of $L(p_1, p_2)$ on the space $\mathbb{C}[\partial_1,\partial_2]$ as follows.

Fix  $\bm{\lambda}=(\lambda_1,\lambda_2)\in\mathbb{C}^{\times}\times\mathbb{C}^{\times},\alpha\in\mathbb{C}$ and $b_{\textbf{0}}\in\mathbb{C}^{\times}$.
For any 
polynomial $f(\partial_1,\partial_2)=\sum\limits_{k=0}^M\sum\limits_{l=0}^N a_{kl}\partial_1^k \partial_2^l\in\mathbb{C}[\partial_1,\partial_2]$, we define
\begin{equation}\label{2.19}
	E(\textbf{m})f(\partial_1,\partial_2)=\lambda_1^{m_1}\lambda_2^{m_2}[(m_2+p_2)(\partial_1+p_1\alpha)-(m_1+p_1)(\partial_2+p_2\alpha)]f(\partial_1-m_1,\partial_2-m_2),
\end{equation}
\begin{equation}\label{2.20}
	t^\textbf{m}f(\partial_1,\partial_2)=\lambda_1^{m_1}\lambda_2^{m_2}b_{\textbf{0}}f(\partial_1-m_1-p_1,\partial_2-m_2-p_2),
\end{equation}
where $f(\partial_1-m_1,\partial_2-m_2)=\sum\limits_{k=0}^M\sum\limits_{l=0}^N a_{kl}(\partial_1-m_1)^k (\partial_2-m_2)^l$, the actions of $\partial_1,\partial_2$ are the usual left multiplication.
It is readily seen that this action gives $\mathbb C[\partial_1, \partial_2]$ an $\widetilde{L}(p_1, p_2)$-module structure, which will be denoted as $\Omega(\bm\lambda, \alpha,b_{\textbf{0}})$, where $\bm{\lambda}\in\mathbb{C}^{\times}\times\mathbb{C}^{\times},\alpha\in\mathbb{C}$ and $b_{\textbf{0}}\in\mathbb{C}^{\times}$.\par
When $p_1=p_2=0$, the action of $L(0,0)$ on $\Omega(\bm\lambda, \alpha,b_{\textbf{0}})$ is independent from
$\alpha$. For any $\bm{\beta}\in\mathbb{C}\times\mathbb{C}$ and $b_{\textbf{0}},k\in\mathbb{C}$, we define another $L(0,0)$-module structure $\Omega(\bm{\lambda},\bm{\beta},b_{\textbf{0}},k)$ as follows.
\begin{equation}\label{4.23}
		E(\textbf{m})f(\partial_1,\partial_2)=\lambda_1^{m_1}\lambda_2^{m_2}(m_2(\partial_1+\beta_1)-m_1(\partial_2+\beta_2))f(\partial_1-m_1,\partial_2-m_2),
\end{equation}
\begin{equation}\label{equ2.6}
	t^\textbf{m}f(\bm{\partial})=
	\left\{\begin{array}{l}\lambda_1^{m_1}\lambda_2^{m_2}kf(\bm{\partial}-\textbf{m}),~~\mbox{if}~~\textbf{m}\neq\textbf{0},
		\\b_{\textbf{0}}f(\bm{\partial}),~~\mbox{if}~~\textbf{m}=\textbf{0}.
	\end{array}\right.
\end{equation}

In the following, we will discuss irreducibility of these modules depending on $\textbf{p}=(0,0)$ or not.
\subsection{The case $\textbf{p}=(p_1, p_2)\neq(0,0)$}
\
\newline
\indent
First on the module $ \Omega(\bm{\lambda},\alpha,b_{\textbf{0}})$ we define
\begin{center} $\Omega'(\bm{\lambda},\alpha,b_{\textbf{0}})=(\partial_1+p_1\alpha)\Omega(\bm{\lambda},\alpha,b_{\textbf{0}})+(\partial_2+p_2\alpha)\Omega(\bm{\lambda},\alpha,b_{\textbf{0}}),$
\end{center}
it is clear that $\Omega'(\bm{\lambda},\alpha,b_{\textbf{0}})$ is an $L(p_1, p_2)$-submodule as well as an $\widetilde{L}(p_1, p_2)$-submodule.\par
Next we show that $\Omega(\bm{\lambda},\alpha,b_{\textbf{0}})$ is simple as both $L(p_1, p_2)$ and $\widetilde{L}(p_1, p_2)$-module, and then study isomorphisms of these modules. If $\Omega(\bm{\lambda},\alpha,b_{\textbf{0}})$ and $\Omega'(\bm{\lambda},\alpha,b_{\textbf{0}})$ are $\mathcal{B}(p_1, p_2)$-module or $\widetilde{\mathcal B}(p_1, p_2)$-module, then they could be briefly written as $\Omega(\bm{\lambda},\alpha)$ and $\Omega'(\bm{\lambda},\alpha)$. We recall
 \cite[Theorem 2.3-2.4]{GWL} and \cite[Theorem 2.6-2.7]{XSX}, and organize them into the following two statements: 
\begin{theo}\label{th2.5} Let $\mathcal{B}$ denote either $\mathcal{B}(p_1, p_2)$ or $\widetilde{\mathcal B}(p_1, p_2)$, $(p_1, p_2)\in\mathbb{C}^{\times}\times\mathbb{C}^{\times}$. 
Then $\Omega'(\bm{\lambda},\alpha)$ is the unique nonzero proper $\mathcal{B}$-submodule of $\Omega(\bm{\lambda},\alpha)$. In particular, $\Omega'(\bm{\lambda},\alpha)$ is a simple $\mathcal{B}$-submodule.
\end{theo}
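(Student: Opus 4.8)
The plan is to deduce the statement from the module structure of $\Omega(\bm{\lambda},\alpha)$ over $\mathcal B(p_1,p_2)$, which is essentially the content of \cite[Theorems 2.3--2.4]{GWL} and \cite[Theorems 2.6--2.7]{XSX}; I sketch the self‑contained argument. First I would normalize by the substitution $u_i=\partial_i+p_i\alpha$, under which $\Omega(\bm{\lambda},\alpha)$ becomes the polynomial ring $\mathbb{C}[u_1,u_2]$, the candidate submodule $\Omega'(\bm{\lambda},\alpha)$ becomes the augmentation ideal $(u_1,u_2)$ (of codimension one), each $\partial_i$ acts by multiplication by $u_i-p_i\alpha$, and \eqref{2.19} takes the clean form $E(\textbf{m})g=\lambda_1^{m_1}\lambda_2^{m_2}\,\ell_{\textbf{m}}(u)\,g(u_1-m_1,u_2-m_2)$ with $\ell_{\textbf{m}}(u)=(m_2+p_2)u_1-(m_1+p_1)u_2$; in particular $E(\textbf{0})$ is multiplication by the nonzero linear form $p_2u_1-p_1u_2$ since $(p_1,p_2)\neq(0,0)$. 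Because every $\ell_{\textbf{m}}$ lies in $(u_1,u_2)$, the subspace $(u_1,u_2)$ is stable under all $E(\textbf{m})$ and, being an ideal of $\mathbb{C}[u_1,u_2]$, under multiplication by $u_i-p_i\alpha$; hence $\Omega'(\bm{\lambda},\alpha)$ is a nonzero proper submodule in both the $\mathcal B(p_1,p_2)$ and the $\widetilde{\mathcal B}(p_1,p_2)$ cases.

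Next I would prove the key claim: every nonzero $\mathcal B(p_1,p_2)$-submodule $W$ contains $\Omega'(\bm{\lambda},\alpha)$. This automatically covers $\widetilde{\mathcal B}(p_1,p_2)$, since any $\widetilde{\mathcal B}(p_1,p_2)$-submodule is a fortiori a $\mathcal B(p_1,p_2)$-submodule. Starting from a nonzero $g\in W$, I would apply the one‑parameter family $E((k,0))$, $k\in\mathbb{Z}$: the elements $\lambda_1^{-k}E((k,0))g\in W$ depend polynomially on $k$, so a Vandermonde argument shows that every coefficient of this polynomial in $k$ again lies in $W$, and its top coefficient is (up to sign) $u_2$ times the leading $u_1$-coefficient of $g$, a nonzero element of $\mathbb{C}[u_2]$. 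Feeding such a one‑variable polynomial back through $E((0,k))$ and extracting coefficients in $k$ the same way produces $u_1\in W$ and $u_2\in W$; then a finite induction on total degree, run through the same $E((k,0))$ and $E((0,k))$ extractions, yields every monomial $u_1^au_2^b$ with $a+b\geq1$, so $\Omega'(\bm{\lambda},\alpha)\subseteq W$. I expect this inductive bookkeeping, together with the harmless split into the cases $p_1=0$, $p_2=0$, $p_1p_2\neq0$ (handled by choosing whichever of $E((k,0))$, $E((0,k))$ has the relevant coefficient nonzero), to be the only genuine obstacle; everything else is formal.

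Finally, the claim closes the argument: if $W$ is a nonzero proper submodule then $\Omega'(\bm{\lambda},\alpha)\subseteq W\subsetneq\Omega(\bm{\lambda},\alpha)$, and since $\Omega(\bm{\lambda},\alpha)/\Omega'(\bm{\lambda},\alpha)$ is one‑dimensional we conclude $W=\Omega'(\bm{\lambda},\alpha)$. Hence $\Omega'(\bm{\lambda},\alpha)$ is the unique nonzero proper submodule; in particular it admits no proper nonzero submodule of its own (such a submodule would be a proper nonzero submodule of $\Omega(\bm{\lambda},\alpha)$ distinct from $\Omega'(\bm{\lambda},\alpha)$), so it is simple, as asserted.
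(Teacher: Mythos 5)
Your argument is correct, but note that the paper does not actually prove Theorem \ref{th2.5}: it is stated as a recollection of \cite[Theorems 2.3--2.4]{GWL} and \cite[Theorems 2.6--2.7]{XSX}, so there is no in-paper proof to compare against. Your self-contained sketch — normalizing to $u_i=\partial_i+p_i\alpha$ so that $\Omega'(\bm{\lambda},\alpha)$ becomes the augmentation ideal $(u_1,u_2)$, extracting coefficients of the polynomial in $k$ from $E((k,0))$ and $E((0,k))$ via Vandermonde, and inducting on total degree to show every nonzero submodule contains $(u_1,u_2)$ — is essentially the standard argument of those references, and the key extraction steps (top coefficient giving $u_2$ times the leading $u_1$-coefficient, then $u_1,u_2\in W$, then the degree induction) all check out.
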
\par
\begin{theo}\label{theo2.4}
	For $\bm{\lambda},\bm{\mu}, \textbf{p}\in\mathbb{C}^{\times}\times\mathbb{C}^{\times},\alpha,\gamma\in\mathbb{C}$, we have\par
	$(1)$ $\Omega(\bm{\lambda},\alpha)\cong\Omega(\bm{\mu},\gamma)$ as $\mathcal{B}(p_1, p_2)$-modules $\Leftrightarrow$ $\bm{\lambda}=\bm{\mu}$ and the only isomorphisms between $\Omega(\bm{\lambda},\alpha)$ and $\Omega(\bm{\mu},\gamma)$ are multiples of $\phi_{\bm{\lambda},\alpha,\gamma}:\Omega(\bm{\lambda},\alpha)\to\Omega(\bm{\lambda},\gamma),(\partial_1+p_1\alpha)^i(\partial_2+p_2\alpha)^j\mapsto(\partial_1+p_1\gamma)^i(\partial_2+p_2\gamma)^j$.\par
	$(2)$ $\Omega(\bm{\lambda},\alpha)\cong\Omega(\bm{\mu},\gamma)$ as $\widetilde{\mathcal B}(p_1, p_2)$-modules $\Leftrightarrow$ $\bm{\lambda}=\bm{\mu},\alpha=\gamma$ and the only isomorphisms between $\Omega(\bm{\lambda},\alpha)$ and $\Omega(\bm{\mu},\gamma)$ are multiples of the identity $id$.
\end{theo}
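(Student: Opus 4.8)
The plan is to prove both parts by the same device, resting on two observations about the actions \eqref{2.19}--\eqref{2.20}. Throughout write $\lambda^{\textbf{m}}:=\lambda_1^{m_1}\lambda_2^{m_2}$, $L_{\textbf{m},\alpha}:=(m_2+p_2)(\partial_1+p_1\alpha)-(m_1+p_1)(\partial_2+p_2\alpha)$, $v_i:=\partial_i+p_i\alpha$, $\textbf{e}_1:=(1,0)$, $\textbf{e}_2:=(0,1)$, and let $\sigma_{\textbf{m}}$ be the shift $f(\partial_1,\partial_2)\mapsto f(\partial_1-m_1,\partial_2-m_2)$, so that $E(\textbf{m})=\lambda^{\textbf{m}}L_{\textbf{m},\alpha}\,\sigma_{\textbf{m}}$ on $\Omega(\bm{\lambda},\alpha)$. \emph{First}, $E(\textbf{0})$ acts on every $\Omega(\bm{\lambda},\alpha)$ by multiplication by the fixed linear form $\ell:=p_2\partial_1-p_1\partial_2$, independently of $\bm{\lambda}$ and $\alpha$. \emph{Second}, a direct computation gives $L_{\textbf{e}_1,\alpha}=\ell-v_2$, $L_{-\textbf{e}_1,\alpha}=\ell+v_2$, $L_{\textbf{e}_2,\alpha}=\ell+v_1$, $L_{-\textbf{e}_2,\alpha}=\ell-v_1$, so the scalars $\lambda_i^{\pm1}$ cancel in $E(\textbf{e}_1)E(-\textbf{e}_1)$ and $E(\textbf{e}_2)E(-\textbf{e}_2)$, and these act by multiplication by the polynomials $\ell^2-p_2\ell-v_2(v_2-p_2)$ and $\ell^2+p_1\ell-v_1(v_1-p_1)$, respectively.

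For part $(2)$, the direction $(\Leftarrow)$ is clear, so I would take a $\widetilde{\mathcal B}(p_1,p_2)$-module isomorphism $\psi\colon\Omega(\bm{\lambda},\alpha)\to\Omega(\bm{\mu},\gamma)$. Since $\partial_1,\partial_2\in\widetilde{\mathcal B}(p_1,p_2)$ act by multiplication on both modules, $\psi$ is $\mathbb{C}[\partial_1,\partial_2]$-linear, hence equals multiplication by the polynomial $\psi(1)$; bijectivity forces $\psi(1)\in\mathbb{C}^\times$, so $\psi=c\cdot\mathrm{id}$. Feeding this into $\psi(E(\textbf{m})f)=E(\textbf{m})\psi(f)$ (with the two module structures on the two sides) yields the operator identities $\lambda^{\textbf{m}}L_{\textbf{m},\alpha}=\mu^{\textbf{m}}L_{\textbf{m},\gamma}$; comparing coefficients at $\textbf{m}=\textbf{e}_1$ and $\textbf{e}_2$, and using $p_1,p_2\neq0$, yields $\bm{\lambda}=\bm{\mu}$ and then $\alpha=\gamma$.

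For part $(1)$, direction $(\Leftarrow)$ is the verification that the substitution $\partial_i\mapsto\partial_i+p_i(\gamma-\alpha)$ — which is precisely $\phi_{\bm{\lambda},\alpha,\gamma}$ — intertwines the two $\mathcal B(p_1,p_2)$-actions, since it turns $\partial_i+p_i\alpha$ into $\partial_i+p_i\gamma$ and commutes with each $\sigma_{\textbf{m}}$. For $(\Rightarrow)$, take a $\mathcal B(p_1,p_2)$-isomorphism $\psi\colon\Omega(\bm{\lambda},\alpha)\to\Omega(\bm{\mu},\gamma)$ and set $v_i':=\partial_i+p_i\gamma$. By the first observation $\psi$ commutes with multiplication by $\ell$; subtracting the $\ell$-dependent parts of the two polynomials above (legitimate since $\psi$ commutes with multiplication by $\ell$), the second observation shows $\psi$ intertwines multiplication by $v_i(v_i-p_i)$ with multiplication by $v_i'(v_i'-p_i)$. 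A short manipulation using $\ell=p_2v_1-p_1v_2$ and $p_1,p_2\neq0$ then writes $\ell v_i$ as a $\mathbb{C}$-linear combination of $\ell,\ \ell^2,\ v_1(v_1-p_1),\ v_2(v_2-p_2)$, so $\psi$ also intertwines multiplication by $\ell v_i$ with multiplication by $\ell v_i'$; cancelling the non-zero-divisor $\ell$ gives $\psi(v_if)=v_i'\,\psi(f)$ for $i=1,2$. Since $v_1,v_2$ generate $\mathbb{C}[\partial_1,\partial_2]$ and $\phi_{\bm{\lambda},\alpha,\gamma}$ is the algebra isomorphism $v_i\mapsto v_i'$, it follows that $\psi$ equals multiplication by $\psi(1)$ composed with $\phi_{\bm{\lambda},\alpha,\gamma}$, and bijectivity forces $\psi(1)=c\in\mathbb{C}^\times$, i.e.\ $\psi=c\,\phi_{\bm{\lambda},\alpha,\gamma}$. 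Finally, putting $\psi=c\,\phi_{\bm{\lambda},\alpha,\gamma}$ back into the module-map condition forces the $\mathcal B(p_1,p_2)$-actions on $\Omega(\bm{\mu},\gamma)$ and $\Omega(\bm{\lambda},\gamma)$ to agree, hence $\lambda^{\textbf{m}}=\mu^{\textbf{m}}$ for all $\textbf{m}\in\mathbb{Z}^2$ and so $\bm{\lambda}=\bm{\mu}$.

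The step I expect to be the main obstacle is the middle part of $(\Rightarrow)$ in $(1)$: upgrading ``$\psi$ commutes with $E(\textbf{0})$'' to ``$\psi$ intertwines multiplication by each $v_i$ with multiplication by $v_i'$'', that is, recovering from the $\mathcal B(p_1,p_2)$-action alone the $\phi_{\bm{\lambda},\alpha,\gamma}$-semilinearity of $\psi$ that would be automatic if $\partial_1,\partial_2$ were present. This is exactly the point where one must check that the quadratic operators built from $E(\pm\textbf{e}_1),E(\pm\textbf{e}_2)$ together with $\ell$ generate enough multiplication operators; the identities $L_{\pm\textbf{e}_1,\alpha}=\ell\mp v_2$ and $L_{\pm\textbf{e}_2,\alpha}=\ell\pm v_1$ make this transparent, but they use both the rank-two (determinant) form of the bracket \eqref{e:L} and the hypothesis $\textbf{p}\in\mathbb{C}^\times\times\mathbb{C}^\times$. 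As an alternative to the last line, once $\bm{\lambda}=\bm{\mu}$ is known one could normalize by $\phi_{\bm{\lambda},\alpha,\gamma}^{-1}$ and prove directly that every $\mathcal B(p_1,p_2)$-endomorphism of $\Omega(\bm{\lambda},\alpha)$ is a scalar, but the argument above settles both claims simultaneously.
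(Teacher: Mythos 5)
Your argument is correct, but note that the paper does not prove Theorem \ref{theo2.4} at all: it is imported verbatim from \cite[Theorems 2.3--2.4]{GWL} and \cite[Theorems 2.6--2.7]{XSX}, so there is no in-text proof to compare against. What you have written is therefore a self-contained derivation, and I checked its pivotal computations: $E(\textbf{0})$ is indeed multiplication by $\ell=p_2\partial_1-p_1\partial_2$ independently of $\bm{\lambda},\alpha$; the identities $L_{\pm\textbf{e}_1,\alpha}=\ell\mp v_2$, $L_{\pm\textbf{e}_2,\alpha}=\ell\pm v_1$ hold; and $E(\textbf{e}_1)E(-\textbf{e}_1)$, $E(\textbf{e}_2)E(-\textbf{e}_2)$ act by multiplication by $\ell^2-p_2\ell-v_2(v_2-p_2)$ and $\ell^2+p_1\ell-v_1(v_1-p_1)$ with the $\lambda_i^{\pm1}$ cancelling (one must remember that $\ell$ shifts by $\mp p_2$, resp.\ $\pm p_1$, under $\sigma_{\pm\textbf{e}_1}$, $\sigma_{\pm\textbf{e}_2}$, which is exactly where the cross terms $\mp p_2\ell$, $\pm p_1\ell$ come from). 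The step you flagged as the main obstacle does go through: for instance
\begin{equation*}
\ell v_1=\tfrac{p_1}{2}\,\ell+\tfrac{1}{2p_2}\,\ell^2+\tfrac{p_2}{2}\,v_1(v_1-p_1)-\tfrac{p_1^2}{2p_2}\,v_2(v_2-p_2),
\end{equation*}
and the same identity holds with $v_i$ replaced by $v_i'$ because $\ell=p_2v_1-p_1v_2=p_2v_1'-p_1v_2'$; cancelling the nonzerodivisor $\ell$ then gives $\psi(v_if)=v_i'\psi(f)$, whence $\psi=c\,\phi_{\bm{\lambda},\alpha,\gamma}$ and $\bm{\lambda}=\bm{\mu}$. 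Part $(2)$ is the easy case since $\partial_1,\partial_2$ act by multiplication, forcing $\psi=c\,\mathrm{id}$, and then $\lambda^{\textbf{e}_i}L_{\textbf{e}_i,\alpha}=\mu^{\textbf{e}_i}L_{\textbf{e}_i,\gamma}$ with $p_1,p_2\neq0$ gives $\bm{\lambda}=\bm{\mu}$, $\alpha=\gamma$. The only cosmetic point worth adding is that when $\textbf{p}\in\mathbb{Z}^2$ the single element $E(-\textbf{p})$ acts as zero on every $\Omega(\bm{\lambda},\alpha)$, so the final coefficient comparison should be carried out at $\textbf{m}=\textbf{e}_1,\textbf{e}_2$ (as you do), not at arbitrary $\textbf{m}$.
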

Since $\mathcal{B}(p_1, p_2)$ is a subalgebra of $L(p_1, p_2)$ and $\widetilde{\mathcal B}(p_1, p_2)$ is a subalgebra of $\widetilde{L}(p_1, p_2)$,
we can extend the above results. 
\begin{theo}
	$\Omega(\bm{\lambda},\alpha,b_{\textbf{0}})$ is a simple $L(p_1, p_2)$ or $\widetilde{L}(p_1, p_2)$-module for $(p_1, p_2)\in\mathbb{C}^{\times}\times\mathbb{C}^{\times}$.
\end{theo}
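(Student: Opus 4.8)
The plan is to bootstrap from the already‑known description of the $\mathcal B$‑submodule lattice in Theorem~\ref{th2.5} and then exploit the one feature that distinguishes $L(p_1,p_2)$ from $\mathcal B(p_1,p_2)$ on this space: the operators $t^{\textbf m}$ — unlike $E(\textbf m)$, $E(\textbf 0)$ and $\partial_1,\partial_2$ — do not leave $\Omega'(\bm\lambda,\alpha)$ invariant.

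Concretely, I would take a nonzero submodule $W$ of $\Omega(\bm\lambda,\alpha,b_{\textbf 0})$, regarded as a module over $L(p_1,p_2)$ (respectively over $\widetilde L(p_1,p_2)$). Restricting the action to the subalgebra $\mathcal B(p_1,p_2)$ (respectively $\widetilde{\mathcal B}(p_1,p_2)$), $W$ becomes a nonzero submodule of the $\mathcal B$‑module $\Omega(\bm\lambda,\alpha)$, since the $\mathcal B$‑action is given by \eqref{2.19} (and, in the tilde case, the $\partial_i$‑action) independently of $b_{\textbf 0}$. Hence Theorem~\ref{th2.5} forces $W=\Omega'(\bm\lambda,\alpha)$ or $W=\Omega(\bm\lambda,\alpha,b_{\textbf 0})$; in the second case we are done, so the task is to rule out $W=\Omega'(\bm\lambda,\alpha)=(\partial_1+p_1\alpha)\,\mathbb C[\partial_1,\partial_2]+(\partial_2+p_2\alpha)\,\mathbb C[\partial_1,\partial_2]$.

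To reach a contradiction I would apply $t^{\textbf 0}$ to the element $\partial_1+p_1\alpha\in W$. By \eqref{2.20},
$$t^{\textbf 0}(\partial_1+p_1\alpha)=b_{\textbf 0}\bigl((\partial_1-p_1)+p_1\alpha\bigr)=b_{\textbf 0}(\partial_1+p_1\alpha)-b_{\textbf 0}\,p_1,$$
which lies in $W$ because $W$ is a submodule; subtracting the scalar multiple $b_{\textbf 0}(\partial_1+p_1\alpha)\in W$ gives $b_{\textbf 0}\,p_1\cdot 1\in W$. Since $b_{\textbf 0}\in\mathbb C^{\times}$ and $p_1\neq 0$, this yields $1\in W=\Omega'(\bm\lambda,\alpha)$; but $\Omega'(\bm\lambda,\alpha)$ is the ideal of polynomials vanishing at $(-p_1\alpha,-p_2\alpha)$ — equivalently, it is a \emph{proper} $\mathcal B$‑submodule by Theorem~\ref{th2.5} — so $1\notin\Omega'(\bm\lambda,\alpha)$, a contradiction. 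Therefore $W=\Omega(\bm\lambda,\alpha,b_{\textbf 0})$, and the module is simple over both $L(p_1,p_2)$ and $\widetilde L(p_1,p_2)$.

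I do not anticipate a genuine obstacle here: Theorem~\ref{th2.5} carries all the structural load, and all that remains is to exhibit a single element on which some $t^{\textbf m}$ escapes $\Omega'(\bm\lambda,\alpha)$. The choice $\textbf m=\textbf 0$ works precisely because $p_1\neq 0$ (the generator $\partial_2+p_2\alpha$ together with $p_2\neq 0$ would serve equally well), which is exactly where the hypothesis $(p_1,p_2)\in\mathbb C^{\times}\times\mathbb C^{\times}$ is needed.
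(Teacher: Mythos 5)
Your proof is correct and follows essentially the same route as the paper: both arguments reduce to Theorem~\ref{th2.5} (any nonzero submodule restricts to a nonzero $\mathcal B$-submodule, hence is $\Omega'$ or all of $\Omega$) and then show that the $t^{\textbf m}$-action pushes $\Omega'(\bm\lambda,\alpha)$ outside itself, producing a nonzero constant modulo $\Omega'$. Your version is in fact a cleaner instantiation — the paper computes $t^{\textbf m}f$ for a general $f\in\Omega'$ and argues the residual constant $C_1+C_2$ is nonzero, whereas your single explicit witness $t^{\textbf 0}(\partial_1+p_1\alpha)\equiv -b_{\textbf 0}p_1\not\equiv 0 \pmod{\Omega'}$ settles the point immediately.
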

\begin{proof}
	We take $f(\partial_1,\partial_2)=(\partial_1+p_1\alpha)f_1(\partial_1,\partial_2)+(\partial_2+p_2\alpha)f_2(\partial_1,\partial_2)\in\Omega'(\bm{\lambda},\alpha,b_{\textbf{0}})$, where $f_1(\partial_1,\partial_2)\in\Omega(\bm{\lambda},\alpha,b_{\textbf{0}}), f_2(\partial_1,\partial_2)\in\Omega(\bm{\lambda},\alpha,b_{\textbf{0}})$. Then we examine if $\Omega'(\bm{\lambda},\alpha,b_{\textbf{0}})$ contains $t^{\textbf{m}}f(\partial_1,\partial_2)$. \par
	By (\ref{2.20}), we can calculate $t^{\textbf{m}}f(\partial_1,\partial_2)$ as follow:
	\begin{center}
		$\begin{aligned}
			t^{\textbf{m}}f(\partial_1,\partial_2)=&\lambda_1^{m_1}\lambda_2^{m_2}b_{\textbf{0}}[(\partial_1+p_1\alpha-m_1-p_1)f_1(\partial_1-m_1-p_1,\partial_2-m_2-p_2)
			\\&+(\partial_2+p_2\alpha-m_2-p_2)f_2(\partial_1-m_1-p_1,\partial_2-m_2-p_2)]\\
			=&\lambda_1^{m_1}\lambda_2^{m_2}b_{\textbf{0}}[(\partial_1+p_1\alpha)f_1(\partial_1-m_1-p_1,\partial_2-m_2-p_2)
			\\&+(\partial_2+p_2\alpha)f_2(\partial_1-m_1-p_1,\partial_2-m_2-p_2)\\&-(m_1+p_1)f_1(\partial_1+p_1\alpha-m_1-p_1-p_1\alpha,\partial_2+p_2\alpha-m_2-p_2-p_2\alpha)
			\\&-(m_2+p_2)f_2(\partial_1+p_1\alpha-m_1-p_1-p_1\alpha,\partial_2+p_2\alpha-m_2-p_2-p_2\alpha)]\\
			=&\lambda_1^{m_1}\lambda_2^{m_2}b_{\textbf{0}}[(\partial_1+p_1\alpha)f_1+(\partial_2+p_2\alpha)f_2+\Delta_1+\Delta_2+C_1+C_2]\\
			\equiv&\lambda_1^{m_1}\lambda_2^{m_2}b_{\textbf{0}}(C_1+C_2)\cdot 1~(\mbox{mod}~\Omega'(\bm{\lambda},\alpha,b_{\textbf{0}})),
		\end{aligned}$
	\end{center}
	where $C_1,C_2$ are the sums of terms $(m_1+p_1+p_1\alpha)^i(m_2+p_2+p_2\alpha)^j(i,j\in\mathbb{N})$ in $-(m_1+p_1)f_1,-(m_2+p_2)f_2$, respectively, and $\Delta_1=-(m_1+p_1)f_1-C_1\in\Omega'(\bm{\lambda},\alpha,b_{\textbf{0}})$, $\Delta_2=-(m_2+p_2)f_2-C_2\in\Omega'(\bm{\lambda},\alpha,b_{\textbf{0}})$. \par
	Since 1 $\notin\Omega'(\bm{\lambda},\alpha,b_{\textbf{0}})$, $t^{\textbf{m}}\Omega'(\bm{\lambda},\alpha,b_{\textbf{0}})\not\subseteq\Omega'(\bm{\lambda},\alpha,b_{\textbf{0}})$, thus $\Omega'(\bm{\lambda},\alpha,b_{\textbf{0}})$ is not an $L(p_1, p_2)$-submodule and an $\widetilde{L}(p_1, p_2)$-submodule of $\Omega(\bm{\lambda},\alpha,b_{\textbf{0}})$. By Theorem \ref{th2.5}, $\Omega'(\bm{\lambda},\alpha,b_{\textbf{0}})$ is the unique nonzero proper $\mathcal{B}$-submodule of $\Omega(\bm{\lambda},\alpha,b_{\textbf{0}})$. Hence, $\Omega(\bm{\lambda},\alpha,b_{\textbf{0}})$ is a simple $L(p_1, p_2)$-module and a simple $\widetilde{L}(p_1, p_2)$-module.
\end{proof}
\begin{theo}
	$(1)$ $\Omega(\bm{\lambda},\alpha,b_{\textbf{0}})\cong\Omega(\bm{\mu},\gamma,b_{\textbf{0}}')$ as $L(p_1, p_2)$-modules $\Leftrightarrow$ $\bm{\lambda}=\bm{\mu},b_{\textbf{0}}=b_{\textbf{0}}'$ and the only isomorphisms between $\Omega(\bm{\lambda},\alpha,b_{\textbf{0}})$ and $\Omega(\bm{\mu},\gamma,b_{\textbf{0}}')$ are multiples of $\phi_{\bm{\lambda},\alpha,\gamma}:\Omega(\bm{\lambda},\alpha,b_{\textbf{0}})\to\Omega(\bm{\mu},\gamma,b_{\textbf{0}}'),(\partial_1+p_1\alpha)^i(\partial_2+p_2\alpha)^j\mapsto(\partial_1+p_1\gamma)^i(\partial_2+p_2\gamma)^j$.\par
	$(2)$ $\Omega(\bm{\lambda},\alpha,b_{\textbf{0}})\cong\Omega(\bm{\mu},\gamma,b_{\textbf{0}}')$ as $\widetilde{L}(p_1, p_2)$-modules $\Leftrightarrow$ $\bm{\lambda}=\bm{\mu},\alpha=\gamma,b_{\textbf{0}}=b_{\textbf{0}}'$ and the only isomorphisms between $\Omega(\bm{\lambda},\alpha,b_{\textbf{0}})$ and $\Omega(\bm{\mu},\gamma,b_{\textbf{0}}')$ are multiples of the identity $id$.
\end{theo}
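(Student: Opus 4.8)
The plan is to derive both statements from the already-established isomorphism classification for $\mathcal{B}$-modules (Theorem \ref{theo2.4}) by restriction, handling the extra generators $t^{\mathbf m}$ (and, for $\widetilde L(p_1,p_2)$, the degree derivations $\partial_i$) separately. The first thing I would record is that $\phi_{\bm\lambda,\alpha,\gamma}$ is simply the substitution $f(\partial_1,\partial_2)\mapsto f(\partial_1+p_1(\gamma-\alpha),\partial_2+p_2(\gamma-\alpha))$, i.e. a shift operator on $\mathbb C[\partial_1,\partial_2]$; hence it commutes with every other shift operator and with multiplication by $p_2\partial_1-p_1\partial_2=E(\mathbf 0)$, but not with multiplication by $\partial_1$ or $\partial_2$ individually unless $\gamma=\alpha$. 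This observation already explains the structural difference between $(1)$ and $(2)$.

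For the ``if'' direction of $(1)$ I would assume $\bm\lambda=\bm\mu$ and $b_{\mathbf 0}=b_{\mathbf 0}'$ and check that $\phi_{\bm\lambda,\alpha,\gamma}$ is an $L(p_1,p_2)$-module isomorphism. Its compatibility with the $E(\mathbf m)$-action is given by Theorem \ref{theo2.4}$(1)$, so only the $t^{\mathbf m}$-action remains: using \eqref{2.20}, both $\phi_{\bm\lambda,\alpha,\gamma}(t^{\mathbf m}f)$ and $t^{\mathbf m}\phi_{\bm\lambda,\alpha,\gamma}(f)$ reduce to $\lambda_1^{m_1}\lambda_2^{m_2}b_{\mathbf 0}\,f\!\left(\partial_1+p_1(\gamma-\alpha)-m_1-p_1,\ \partial_2+p_2(\gamma-\alpha)-m_2-p_2\right)$ because shift operators commute. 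Every nonzero scalar multiple of $\phi_{\bm\lambda,\alpha,\gamma}$ is then an isomorphism as well. For $(2)$, if additionally $\alpha=\gamma$ then $\phi_{\bm\lambda,\alpha,\gamma}=\mathrm{id}$ and the two module structures are literally identical, so the ``if'' direction is immediate.

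For the ``only if'' direction I would start from an arbitrary module isomorphism $\psi\colon\Omega(\bm\lambda,\alpha,b_{\mathbf 0})\to\Omega(\bm\mu,\gamma,b_{\mathbf 0}')$. In case $(1)$, restricting $\psi$ to the subalgebra $\mathcal{B}(p_1,p_2)$ (whose action ignores $b_{\mathbf 0}$ and yields the modules $\Omega(\bm\lambda,\alpha)$, $\Omega(\bm\mu,\gamma)$) and applying Theorem \ref{theo2.4}$(1)$ forces $\bm\lambda=\bm\mu$ and $\psi=c_0\,\phi_{\bm\lambda,\alpha,\gamma}$ for some $c_0\in\mathbb C^\times$; in particular the only isomorphisms are scalar multiples of $\phi_{\bm\lambda,\alpha,\gamma}$. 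It then remains only to pin down $b_{\mathbf 0}$: applying the relation $\psi(t^{\mathbf 0}f)=t^{\mathbf 0}\psi(f)$ and cancelling the common shift operator yields $c_0 b_{\mathbf 0}=c_0 b_{\mathbf 0}'$, hence $b_{\mathbf 0}=b_{\mathbf 0}'$. In case $(2)$, the same reasoning with $\widetilde{\mathcal B}(p_1,p_2)$ and Theorem \ref{theo2.4}$(2)$ forces $\bm\lambda=\bm\mu$, $\alpha=\gamma$ and $\psi=c_0\,\mathrm{id}$, and the $t^{\mathbf 0}$-relation again gives $b_{\mathbf 0}=b_{\mathbf 0}'$.

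I do not expect a serious obstacle once Theorem \ref{theo2.4} is in hand; the argument is essentially a bookkeeping verification that relation \eqref{2.20} (and, in case $(2)$, the plain multiplication by $\partial_1,\partial_2$) is respected. The one place I would be careful is that for $\mathbf p\neq\mathbf 0$ the operator $t^{\mathbf 0}$ is not a scalar but $b_{\mathbf 0}$ times the invertible shift $f(\partial_1,\partial_2)\mapsto f(\partial_1-p_1,\partial_2-p_2)$, so before reading off $b_{\mathbf 0}=b_{\mathbf 0}'$ one must confirm that this shift cancels cleanly against the shift built into $\phi_{\bm\lambda,\alpha,\gamma}$ — which is exactly where the commutativity of shift operators on $\mathbb C[\partial_1,\partial_2]$ is used.
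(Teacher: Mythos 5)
Your proposal is correct and follows essentially the same route as the paper: reduce to the $\mathcal{B}(p_1,p_2)$- (resp.\ $\widetilde{\mathcal B}(p_1,p_2)$-) classification of Theorem \ref{theo2.4} by restriction, then verify the $t^{\mathbf m}$-intertwining relation to pin down $b_{\mathbf 0}=b_{\mathbf 0}'$. Your observation that $\phi_{\bm\lambda,\alpha,\gamma}$ is the shift $f(\bm\partial)\mapsto f(\bm\partial+(\gamma-\alpha)\mathbf p)$ and hence commutes with the shift in \eqref{2.20} is just a cleaner packaging of the monomial computation the paper carries out explicitly.
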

\begin{proof}
	$(1)$ First we show that $\phi_{\bm{\lambda},\alpha,\gamma}$ is an $L(p_1, p_2)$-module isomorphism. Note that it is a $\mathcal{B}(p_1, p_2)$-isomorphism. It follows from  (\ref{2.20}) that
	\begin{center}
		$\begin{aligned}
		&\phi_{\bm{\lambda},\alpha,\gamma}(t^{\textbf{m}}(\partial_1+p_1\alpha)^i(\partial_2+p_2\alpha)^j)\\
		=&\phi_{\bm{\lambda},\alpha,\gamma}(\lambda_1^{m_1}\lambda_2^{m_2}b_{\textbf{0}}(\partial_1+p_1\alpha-m_1-p_1)^i(\partial_2+p_2\alpha-m_2-p_2)^j)\\
		=&\lambda_1^{m_1}\lambda_2^{m_2}b_{\textbf{0}}(\partial_1+p_1\gamma-m_1-p_1)^i(\partial_2+p_2\gamma-m_2-p_2)^j\\
		=&t^{\textbf{m}}\phi_{\bm{\lambda},\alpha,\gamma}((\partial_1+p_1\alpha)^i(\partial_2+p_2\alpha)^j),
	\end{aligned}$
	\end{center}
so $\phi_{\bm{\lambda},\alpha,\gamma}$ is an $L(p_1, p_2)$-module isomorphism.\par
If $\bm{\lambda}=\bm{\mu},b_{\textbf{0}}=b_{\textbf{0}}'$, then $\Omega(\bm{\lambda},\alpha,b_{\textbf{0}})\cong\Omega(\bm{\mu},\gamma,b_{\textbf{0}}')$ as $L(p_1, p_2)$-modules. Since the only isomorphisms between $\mathcal{B}(p_1,p_2)$-modules are multiples of $\phi_{\bm{\lambda},\alpha,\gamma}$,
they are the only isomorphisms between $L(p_1,p_2)$-modules.\par
If $\Omega(\bm{\lambda},\alpha,b_{\textbf{0}})\cong\Omega(\bm{\mu},\gamma,b_{\textbf{0}}')$ as $L(p_1, p_2)$-modules, then they are isomorphic as $\mathcal{B}(p_1, p_2)$-modules. 
So multiples of $\phi_{\bm{\lambda},\alpha,\gamma}$ are the only $L(p_1, p_2)$-module isomorphisms, and $\phi_{\bm{\lambda},\alpha,\gamma}(t^{\textbf{m}}(\partial_1+p_1\alpha)^i(\partial_2+p_2\alpha)^j)=t^{\textbf{m}}\phi_{\bm{\lambda},\alpha,\gamma}((\partial_1+p_1\alpha)^i(\partial_2+p_2\alpha)^j)$ can imply $b_{\textbf{0}}=b_{\textbf{0}}'$.\par
$(2)$ is similarly proved, as the map $id$ is an $\widetilde{L}(p_1, p_2)$-module isomorphism.
\end{proof}

\subsection{The case $\textbf{p}=(p_1, p_2)=(0,0)$}
As above we define
\begin{center}
	$\Omega''(\bm{\lambda},\bm{\beta},b_{\textbf{0}},k)=(\partial_1+\beta_1)\Omega(\bm{\lambda},\bm{\beta},b_{\textbf{0}},k)+(\partial_2+\beta_2)\Omega(\bm{\lambda},\bm{\beta},b_{\textbf{0}},k),$
\end{center}
which is 
both an $L(0,0)$-submodule and an $\widetilde{L}(0,0)$-submodule.

If $\Omega(\bm{\lambda},\bm{\beta},b_{\textbf{0}},k)$ and $\Omega''(\bm{\lambda},\bm{\beta},b_{\textbf{0}},k)$ are $\mathcal{B}(0,0)$-module or $\widetilde{\mathcal B}(0,0)$-module, they can be briefly written as $\Omega(\bm{\lambda},\bm{\beta})$ and $\Omega''(\bm{\lambda},\bm{\beta})$. We recall the following results from \cite{Z}.
\begin{theo}\label{th3.13}
$\Omega''(\bm{\lambda},\bm{\beta})$ is the unique irreducible $\mathcal{L}$-submodule of $\Omega(\bm{\lambda},\bm{\beta})$, and
	\begin{center}
		$\Omega(\bm{\lambda},\bm{\beta})/\Omega''(\bm{\lambda},\bm{\beta})\cong\mathbb{C}$.
	\end{center}
\end{theo}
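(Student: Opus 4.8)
The plan is to recast everything as a statement about ideals of $\mathbb{C}[x_1,x_2]$ and to isolate the irreducibility of $\Omega''(\bm\lambda,\bm\beta)$ as the only nontrivial point. First I would change coordinates by setting $x_1=\partial_1+\beta_1$ and $x_2=\partial_2+\beta_2$, so that $\Omega(\bm\lambda,\bm\beta)=\mathbb{C}[x_1,x_2]$, the degree derivations $\partial_i$ act by multiplication by $x_i-\beta_i$, and by \eqref{4.23} the action becomes $E(\textbf{m})g=\lambda_1^{m_1}\lambda_2^{m_2}(m_2x_1-m_1x_2)\,g(x_1-m_1,x_2-m_2)$. In these coordinates $\Omega''(\bm\lambda,\bm\beta)$ is exactly the maximal ideal $\mathfrak m=(x_1,x_2)$ of polynomials vanishing at the origin. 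That $\mathfrak m$ is a submodule is immediate: it is an ideal (hence stable under the $\partial_i$-action), and if $g(0)=0$ then the prefactor $m_2x_1-m_1x_2$ forces $E(\textbf{m})g$ to vanish at the origin as well. The quotient $\Omega/\Omega''$ is one-dimensional, spanned by the image of $1$; since $E(\textbf{m})\cdot1=\lambda_1^{m_1}\lambda_2^{m_2}(m_2x_1-m_1x_2)\in\mathfrak m$ and $\partial_i\cdot1=x_i-\beta_i\equiv-\beta_i\pmod{\Omega''}$, every $E(\textbf{m})$ acts as zero and each $\partial_i$ by a scalar, giving $\Omega/\Omega''\cong\mathbb{C}$.

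Granting that $\Omega''$ is irreducible, I would deduce the ``unique irreducible submodule'' claim from the submodule lattice. For any submodule $V$, irreducibility of $\Omega''$ forces $V\cap\Omega''\in\{0,\Omega''\}$ and simplicity of the quotient forces $V+\Omega''\in\{\Omega'',\Omega\}$. If $V\subseteq\Omega''$ then $V=V\cap\Omega''\in\{0,\Omega''\}$; if $V+\Omega''=\Omega$ and $V\cap\Omega''=\Omega''$ then $V=\Omega$. The only remaining case is $V\cap\Omega''=0$ with $V\neq0$, which would make $V$ a one-dimensional submodule spanned by a common $\mathcal{L}$-eigenvector $v$; but then $(x_1-\beta_1)v\in\mathbb{C}v$, which is impossible for a nonzero polynomial by degree. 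Hence the submodules are exactly $0,\Omega'',\Omega$, and $\Omega''$ is the unique simple submodule.

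The main obstacle is irreducibility of $\Omega''$. The key structural observation is that a nonzero submodule $W\subseteq\Omega''$ is automatically an ideal of $\mathbb{C}[x_1,x_2]$: being a subspace closed under multiplication by $x_i-\beta_i$ (the $\partial_i$-action), it is closed under multiplication by $x_i$. I would then combine the ideal property with the $E$-action through finite differences. Fixing $0\neq f\in W$ with $n=\deg_{x_1}f$, the formula $E((k,0))g=-k\lambda_1^{k}x_2\,g(x_1-k,x_2)$ gives $x_2\,f(x_1-k,x_2)\in W$ for $k=1,\dots,n$, while the $k=0$ term $x_2f\in W$ comes from the ideal structure; forming $\sum_{k=0}^{n}\binom{n}{k}(-1)^{n-k}x_2\,f(x_1-k,x_2)\in W$ annihilates the $x_1$-dependence and yields a nonzero element $x_2\,p(x_2)\in W\cap\mathbb{C}[x_2]$. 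Feeding this back through $E((0,k))g=k\lambda_2^{k}x_1\,g(x_1,x_2-k)$ and taking a second forward difference of matching order in $k$ collapses $x_2\,p(x_2)$ to a nonzero constant, producing $x_1\in W$; the symmetric computation gives $x_2\in W$. Thus $W\supseteq(x_1,x_2)=\mathfrak m=\Omega''$, so $W=\Omega''$ and $\Omega''$ is irreducible.

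I expect the finite-difference extraction to be the delicate step, since one must verify that the relevant binomial combinations of the shifted polynomials $f(x_1-k,x_2)$ (resp.\ $p(x_2-k)$) survive with nonzero leading coefficient and remain inside $W$. It is worth emphasizing that, because $E(\textbf{0})$ vanishes identically when $\textbf{p}=(0,0)$, the $k=0$ contributions in these differences must be supplied by the ideal structure rather than by the $E$-action; this is precisely where the presence of the degree derivations $\partial_1,\partial_2$ in $\mathcal{L}=\widetilde{\mathcal B}(0,0)$ becomes essential to the argument.
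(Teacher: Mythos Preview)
The paper does not prove this theorem; it is quoted from \cite{Z} without argument. Your proposal supplies a complete and correct proof, so there is no ``paper's own proof'' to compare against.

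Your argument is sound throughout. The coordinate change $x_i=\partial_i+\beta_i$ identifying $\Omega''$ with the maximal ideal $\mathfrak m=(x_1,x_2)$ is the natural normalisation. The key observation---that any $\mathcal{L}$-submodule is automatically an ideal because closure under multiplication by $x_i-\beta_i$ (the $\partial_i$-action) plus closure under scalars gives closure under multiplication by $x_i$---is exactly what makes the irreducibility argument work, and you are right to flag that this step genuinely requires the degree derivations in $\mathcal{L}=\widetilde{\mathcal B}(0,0)$. The finite-difference extraction is correct: with $n=\deg_{x_1}f$, the combination $\sum_{k=0}^{n}\binom{n}{k}(-1)^{n-k}x_2\,f(x_1-k,x_2)$ equals $(-1)^n n!\,x_2 c_n(x_2)$, where $c_n(x_2)$ is the (nonzero) leading $x_1$-coefficient of $f$; the $k=0$ term is supplied by the ideal property, as you note. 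The second reduction and the symmetric step giving $x_2\in W$ go through the same way (alternatively, once $x_1\in W$, one computes $E((1,0))\cdot x_1=-\lambda_1 x_2(x_1-1)\in W$ and subtracts $x_1x_2\in W$ to get $x_2\in W$ directly). Your lattice argument ruling out a one-dimensional complement is also correct: $(x_1-\beta_1-c)v=0$ is impossible for $v\neq 0$ in $\mathbb{C}[x_1,x_2]$.
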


\begin{theo}\label{th3.14}
	The simple $\mathcal{L}$-modules $\Omega''(\bm{\lambda},\bm{\beta})\cong\Omega''(\bm{\lambda}',\bm{\beta}')$ if and only if $\bm{\lambda}=\bm{\lambda}',$ $\bm{\beta}=\bm{\beta}'$.
\end{theo}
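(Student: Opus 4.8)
The ``if'' part is immediate: when $\bm{\lambda}=\bm{\lambda}'$ and $\bm{\beta}=\bm{\beta}'$ the two modules are literally equal, so the identity map is an isomorphism. For the converse the plan is to exploit the degree derivations $\partial_1,\partial_2$, which lie in $\mathcal{L}\cong\widetilde{\mathcal{B}}(0,0)$ and act on $\Omega''(\bm{\lambda},\bm{\beta})$ by ordinary multiplication of polynomials, so that any module isomorphism between two such modules is automatically $\mathbb{C}[\partial_1,\partial_2]$-linear.

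Let $\varphi\colon\Omega''(\bm{\lambda},\bm{\beta})\to\Omega''(\bm{\lambda}',\bm{\beta}')$ be an $\mathcal{L}$-module isomorphism. First I would identify $\Omega''(\bm{\lambda},\bm{\beta})$, as a $\mathbb{C}[\partial_1,\partial_2]$-module, with the maximal ideal $\mathfrak{m}_{\bm{\beta}}=(\partial_1+\beta_1,\partial_2+\beta_2)$ of polynomials vanishing at $(-\beta_1,-\beta_2)$ --- this is exactly the definition $\Omega''(\bm{\lambda},\bm{\beta})=(\partial_1+\beta_1)\Omega(\bm{\lambda},\bm{\beta})+(\partial_2+\beta_2)\Omega(\bm{\lambda},\bm{\beta})$ --- and likewise $\Omega''(\bm{\lambda}',\bm{\beta}')$ with $\mathfrak{m}_{\bm{\beta}'}$. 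Since $\varphi$ commutes with multiplication by $\partial_1$ and $\partial_2$, it is $\mathbb{C}[\partial_1,\partial_2]$-linear; and since $\mathbb{C}[\partial_1,\partial_2]$ is an integral domain, a nonzero $\mathbb{C}[\partial_1,\partial_2]$-linear map between nonzero ideals is necessarily multiplication by a fixed element $g$ of the fraction field $\mathbb{C}(\partial_1,\partial_2)$ (take $g=\varphi(h)/h$ for any nonzero $h\in\mathfrak{m}_{\bm{\beta}}$; linearity makes this ratio independent of the choice of $h$), with $\varphi^{-1}$ being multiplication by $g^{-1}$.

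Next I would pin $g$ down. From $g\cdot\mathfrak{m}_{\bm{\beta}}\subseteq\mathbb{C}[\partial_1,\partial_2]$ and the fact that $\partial_1+\beta_1$ and $\partial_2+\beta_2$ are coprime, writing $g$ in lowest terms forces its denominator to be a nonzero constant, so $g\in\mathbb{C}[\partial_1,\partial_2]$; the same argument applied to $g^{-1}$ and $\mathfrak{m}_{\bm{\beta}'}$ gives $g^{-1}\in\mathbb{C}[\partial_1,\partial_2]$, whence $g\in\mathbb{C}^{\times}$. Consequently $\mathfrak{m}_{\bm{\beta}'}=g\,\mathfrak{m}_{\bm{\beta}}=\mathfrak{m}_{\bm{\beta}}$, and comparing the points at which these maximal ideals vanish gives $\bm{\beta}=\bm{\beta}'$. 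Finally, with $\varphi$ now identified with the scalar $g$, the intertwining relation $\varphi(E(\textbf{m})f)=E(\textbf{m})\varphi(f)$ reduces to the statement that $E(\textbf{m})$ acts in the same way on $\Omega''(\bm{\lambda},\bm{\beta})$ and on $\Omega''(\bm{\lambda}',\bm{\beta})$ for every $f\in\mathfrak{m}_{\bm{\beta}}$; substituting $f=\partial_1+\beta_1$ with $\textbf{m}=(1,0)$ and $\textbf{m}=(0,1)$ into \eqref{4.23} and comparing coefficients yields $\lambda_1=\lambda_1'$ and $\lambda_2=\lambda_2'$.

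The step I expect to be the crux is showing that a $\mathbb{C}[\partial_1,\partial_2]$-module isomorphism between $\mathfrak{m}_{\bm{\beta}}$ and $\mathfrak{m}_{\bm{\beta}'}$ must be a nonzero scalar; this uses that $\mathbb{C}[\partial_1,\partial_2]$ is a UFD (hence integrally closed) and that $\mathfrak{m}_{\bm{\beta}}$ is generated by a coprime pair. Once that is in hand, recovering $\bm{\beta}$ and then $\bm{\lambda}$ from \eqref{4.23} is routine.
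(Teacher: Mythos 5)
Your proof is correct. Note first that the paper itself offers no proof of Theorem \ref{th3.14} --- it is quoted from \cite{Z} --- so there is no in-text argument to compare against; your proposal is a genuine, self-contained proof. The route you take is a natural one and differs from the device the paper uses for the isomorphism theorems it does prove (e.g.\ Theorem \ref{th3.16}), where one evaluates a candidate isomorphism on the free generator $1$ and compares leading coefficients. That device is unavailable here because $\Omega''(\bm{\lambda},\bm{\beta})=\mathfrak{m}_{\bm{\beta}}$ is not a free (indeed not a principal) $\mathbb{C}[\partial_1,\partial_2]$-module; your substitute --- that a $\mathbb{C}[\partial_1,\partial_2]$-linear map between nonzero ideals of a domain is multiplication by an element $g$ of the fraction field, and that the coprimality of $\partial_1+\beta_1$ and $\partial_2+\beta_2$ together with the same argument applied to $g^{-1}$ forces $g\in\mathbb{C}^{\times}$ --- is exactly the right replacement, and it immediately yields $\mathfrak{m}_{\bm{\beta}}=\mathfrak{m}_{\bm{\beta}'}$, hence $\bm{\beta}=\bm{\beta}'$, after which comparing the $E(\textbf{m})$-actions for $\textbf{m}=(1,0),(0,1)$ gives $\bm{\lambda}=\bm{\lambda}'$. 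One point deserves emphasis: everything hinges on $\partial_1,\partial_2$ belonging to $\mathcal{L}$ (the convention of Remark \ref{label4}, $\mathcal{L}\cong\widetilde{\mathcal{B}}(0,0)$), which is what makes $\varphi$ automatically $\mathbb{C}[\partial_1,\partial_2]$-linear; if $\mathcal{L}$ consisted only of the $E(\textbf{m})$'s, the shift map $(\partial_1+\beta_1)^i(\partial_2+\beta_2)^j\mapsto(\partial_1+\beta_1')^i(\partial_2+\beta_2')^j$ would be an isomorphism onto $\Omega''(\bm{\lambda},\bm{\beta}')$ for arbitrary $\bm{\beta}'$ and the statement would fail, so your explicit use of the degree derivations is not a convenience but a necessity.
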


Note that $\mathcal{L}$ is a subalgebra of $\widetilde{L}(0,0)$, we can show the following two results. 

\begin{theo}\label{th3.15}
	$\Omega(\bm{\lambda},\bm{\beta},b_{\textbf{0}},k)$ is simple as both $L(0,0)$- and $\widetilde{L}(0,0)$-module.
	\begin{proof}
		We take $f(\partial_1,\partial_2)=(\partial_1+\beta_1)f_1(\partial_1,\partial_2)+(\partial_2+\beta_2)f_2(\partial_1,\partial_2)\in\Omega''(\bm{\lambda},\bm{\beta},b_{\textbf{0}},k)$, where $f_1(\partial_1,\partial_2)\in\Omega(\bm{\lambda},\bm{\beta},b_{\textbf{0}},k), f_2(\partial_1,\partial_2)\in\Omega(\bm{\lambda},\bm{\beta},b_{\textbf{0}},k)$, then we calculate that
		\begin{center}
			$t^{\textbf{m}}f(\partial_1,\partial_2)\equiv\lambda_1^{m_1}\lambda_2^{m_2}k(C_1+C_2)\cdot 1~(\mbox{mod}~\Omega''(\bm{\lambda},\bm{\beta},b_{\textbf{0}},k)),\textbf{m}\neq\textbf{0},$
		\end{center}
	\begin{center}
		$t^{\textbf{0}}f(\partial_1,\partial_2)\equiv 0~(\mbox{mod}~\Omega''(\bm{\lambda},\bm{\beta},b_{\textbf{0}},k)),$
	\end{center}
where $C_1,C_2$ are sums of terms $(m_1+\beta_1)^i(m_2+\beta_2)^j(i,j\in\mathbb{N})$ in $-m_1f_1,-m_2f_2$, respectively, then $t^{\textbf{m}}\Omega''(\bm{\lambda},\bm{\beta},b_{\textbf{0}},k)\not\subseteq\Omega''(\bm{\lambda},\bm{\beta},b_{\textbf{0}},k)$, thus $\Omega''(\bm{\lambda},\bm{\beta},b_{\textbf{0}},k)$ is neither $L(0,0)$- nor $\widetilde{L}(0,0)$-submodule of $\Omega(\bm{\lambda},\bm{\beta},b_{\textbf{0}},k)$. But by Theorem \ref{th3.13}, $\Omega''(\bm{\lambda},\bm{\beta},b_{\textbf{0}},k)$ is the unique irreducible $\mathcal{L}$-submodule of $\Omega(\bm{\lambda},\bm{\beta},b_{\textbf{0}},k)$. Hence, $\Omega(\bm{\lambda},\bm{\beta},b_{\textbf{0}},k)$ is
simple both as an $L(0,0)$- and $\widetilde{L}(0,0)$-module.
	\end{proof}
\end{theo}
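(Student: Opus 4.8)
\emph{Proof sketch.} Write $\Omega=\Omega(\bm{\lambda},\bm{\beta},b_{\textbf{0}},k)$ and $\Omega''=\Omega''(\bm{\lambda},\bm{\beta},b_{\textbf{0}},k)$. The plan is to first pin down the lattice of submodules of $\Omega$ over the subalgebra $\mathcal{L}=\widetilde{\mathcal B}(0,0)$ (and over $\mathcal{B}(0,0)$ for the $L(0,0)$-statement), and then to show that the single proper candidate it produces is destroyed by the operators $t^{\textbf{m}}$, which lie outside these subalgebras. By Theorem \ref{th3.13}, $\Omega''$ is the unique irreducible $\mathcal{L}$-submodule of $\Omega$ and $\Omega/\Omega''\cong\mathbb{C}$, which is simple; I would then argue that the only $\mathcal{L}$-submodules of $\Omega$ are $0,\Omega''$ and $\Omega$. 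Indeed, if $W$ is a nonzero $\mathcal{L}$-submodule then either $W\subseteq\Omega''$, forcing $W=\Omega''$ by irreducibility of $\Omega''$, or $W$ surjects onto $\Omega/\Omega''$, so that $W+\Omega''=\Omega$; in the latter case $W\cap\Omega''$ is $0$ or $\Omega''$, and $W\cap\Omega''=0$ is impossible because it would exhibit $W\cong\mathbb{C}$ as an irreducible $\mathcal{L}$-submodule different from $\Omega''$, against Theorem \ref{th3.13}, so $W=\Omega$. Running the same argument over $\mathcal{B}(0,0)$ (with the corresponding submodule classification for that subalgebra in place of Theorem \ref{th3.13}) handles $L(0,0)$. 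Hence any nonzero $\widetilde{L}(0,0)$- (resp.\ $L(0,0)$-) submodule of $\Omega$ equals $\Omega''$ or $\Omega$.

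It then remains to show that $\Omega''$ is not stable under the whole algebra. It is stable under $\partial_1,\partial_2$ and all $E(\textbf{m})$ (Theorem \ref{th3.13}) and under $t^{\textbf{0}}$ (which acts by the scalar $b_{\textbf{0}}$), so the only possible obstruction is some $t^{\textbf{m}}$ with $\textbf{m}\neq\textbf{0}$. Since $\Omega''=(\partial_1+\beta_1)\Omega+(\partial_2+\beta_2)\Omega$, the quotient map $\Omega\to\Omega/\Omega''\cong\mathbb{C}$ is evaluation at $(\partial_1,\partial_2)=(-\beta_1,-\beta_2)$. For $f=(\partial_1+\beta_1)f_1+(\partial_2+\beta_2)f_2\in\Omega''$, formula \eqref{equ2.6} together with the substitution $\partial_i\mapsto\partial_i-m_i$ gives
\begin{align*}
t^{\textbf{m}}f=\lambda_1^{m_1}\lambda_2^{m_2}k\big[&(\partial_1+\beta_1)f_1(\bm{\partial}-\textbf{m})+(\partial_2+\beta_2)f_2(\bm{\partial}-\textbf{m})\\
&-m_1f_1(\bm{\partial}-\textbf{m})-m_2f_2(\bm{\partial}-\textbf{m})\big],
\end{align*}
whose first two summands lie in $\Omega''$. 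Reducing modulo $\Omega''$, i.e.\ evaluating at $\bm{\partial}=-\bm{\beta}$, gives $t^{\textbf{m}}f\equiv\lambda_1^{m_1}\lambda_2^{m_2}k(C_1+C_2)\cdot1\pmod{\Omega''}$ with $C_i=-m_if_i(-\bm{\beta}-\textbf{m})$; choosing $f_1=1$, $f_2=0$ and $\textbf{m}$ with $m_1\neq0$ yields $t^{\textbf{m}}f\equiv-\lambda_1^{m_1}\lambda_2^{m_2}km_1\cdot1\pmod{\Omega''}$, which is nonzero since $1\notin\Omega''$. Thus $t^{\textbf{m}}\Omega''\not\subseteq\Omega''$, so $\Omega''$ is not a submodule, and by the first paragraph the only nonzero submodule of $\Omega$ is $\Omega$ itself.

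The hard part will be this last computation: one must correctly extract the piece of $t^{\textbf{m}}f$ that survives modulo $\Omega''$ (the ``constant'' $C_1+C_2$) and arrange for it to be nonzero, which is exactly what forces $k\neq0$. Note that if $k=0$ then $t^{\textbf{m}}$ acts as $0$ for $\textbf{m}\neq\textbf{0}$ while $t^{\textbf{0}}$ acts as the scalar $b_{\textbf{0}}$, so $\Omega''$ \emph{is} a submodule and $\Omega$ fails to be simple; the statement should therefore be read with $k\in\mathbb{C}^{\times}$. A secondary subtlety is that, since $\partial_1,\partial_2\notin L(0,0)$, the $L(0,0)$-statement cannot literally be reduced to $\mathcal{L}$ but must be reduced to $\mathcal{B}(0,0)$, so one needs the analogue of Theorem \ref{th3.13} for $\mathcal{B}(0,0)$; this holds by the same sources.
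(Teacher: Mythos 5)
Your proof is correct and follows essentially the same route as the paper's: reduce modulo $\Omega''$ (the maximal ideal of polynomials vanishing at $(-\beta_1,-\beta_2)$), show that $t^{\textbf{m}}$ with $\textbf{m}\neq\textbf{0}$ sends a suitable element of $\Omega''$ to a nonzero constant, and combine this with the uniqueness of $\Omega''$ as a proper submodule over the relevant subalgebra. Your version is tighter in two respects, and both of your caveats are genuine. First, the theorem as printed allows $k\in\mathbb{C}$; for $k=0$ every $t^{\textbf{m}}$ with $\textbf{m}\neq\textbf{0}$ acts by zero while $t^{\textbf{0}}$ acts by the scalar $b_{\textbf{0}}$, so $\Omega''$ \emph{is} then an $L(0,0)$- and $\widetilde{L}(0,0)$-submodule and simplicity fails. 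The paper's proof asserts $t^{\textbf{m}}f\equiv\lambda_1^{m_1}\lambda_2^{m_2}k(C_1+C_2)\cdot1$ and immediately concludes $t^{\textbf{m}}\Omega''\not\subseteq\Omega''$ without exhibiting a choice making this nonzero; your witness $f_1=1$, $f_2=0$, $m_1\neq0$ supplies it and makes the required hypothesis $k\neq0$ visible. Second, for the $L(0,0)$ half the paper invokes Theorem \ref{th3.13}, which classifies $\mathcal{L}$-submodules; since $\partial_1,\partial_2\notin L(0,0)$, an $L(0,0)$-submodule is a priori only a $\mathcal{B}(0,0)$-submodule, so the $\mathcal{B}(0,0)$-analogue of Theorem \ref{th3.13} is indeed needed, exactly as you note (compare Theorem \ref{th2.5}, which is stated for both subalgebras in the $\textbf{p}\neq(0,0)$ case). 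Your explicit lattice argument showing the only $\mathcal{L}$-submodules are $0$, $\Omega''$, $\Omega$ is likewise a step the paper leaves implicit, though it is standard given that $\Omega/\Omega''\cong\mathbb{C}$.
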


\begin{theo}\label{th3.16} 
$(1)$ $\Omega(\bm{\lambda},\bm{\beta},b_{\textbf{0}},k)\cong\Omega(\bm{\lambda}',\bm{\beta}',b_{\textbf{0}}',k')$ as $L(0,0)$-modules $\Leftrightarrow$ $\bm{\lambda}=\bm{\lambda}',b_{\textbf{0}}=b_{\textbf{0}}',k=k'$.\par
	$(2)$ $\Omega(\bm{\lambda},\bm{\beta},b_{\textbf{0}},k)\cong\Omega(\bm{\lambda}',\bm{\beta}',b_{\textbf{0}}',k')$ as $\widetilde{L}(0,0)$-modules $\Leftrightarrow$ $\bm{\lambda}=\bm{\lambda}',\bm{\beta}=\bm{\beta}',b_{\textbf{0}}=b_{\textbf{0}}',k=k'$.
\end{theo}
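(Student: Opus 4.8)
The plan is to mirror the treatment of the case $\textbf{p}\neq(0,0)$. For the ``if'' direction of $(1)$, when $\bm\lambda=\bm\lambda'$, $b_{\textbf 0}=b_{\textbf 0}'$ and $k=k'$, one writes down the analogue of $\phi_{\bm\lambda,\alpha,\gamma}$, namely
$$\phi_{\bm\beta,\bm\beta'}\colon\Omega(\bm\lambda,\bm\beta,b_{\textbf 0},k)\longrightarrow\Omega(\bm\lambda,\bm\beta',b_{\textbf 0},k),\qquad(\partial_1+\beta_1)^i(\partial_2+\beta_2)^j\longmapsto(\partial_1+\beta_1')^i(\partial_2+\beta_2')^j,$$
and checks by direct substitution in \eqref{4.23} and \eqref{equ2.6} that it commutes with the actions of $E(\textbf m)$ and $t^{\textbf m}$; this works because those actions only involve the combinations $\partial_1+\beta_1$ and $\partial_2+\beta_2$ and integer translates of them, which $\phi_{\bm\beta,\bm\beta'}$ carries over verbatim. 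For the ``if'' direction of $(2)$, once in addition $\bm\beta=\bm\beta'$, the identity map does the job. Both are routine verifications.

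For the ``only if'' direction of $(2)$ there is a clean shortcut. An $\widetilde{L}(0,0)$-module isomorphism $\Phi$ intertwines the actions of $\partial_1,\partial_2\in\widetilde{L}(0,0)$, which on $\mathbb{C}[\partial_1,\partial_2]$ are ordinary multiplication; hence $\Phi$ is a $\mathbb{C}[\partial_1,\partial_2]$-module map, i.e. multiplication by $\Phi(1)$, and bijectivity forces $\Phi(1)\in\mathbb{C}^{\times}$, so $\Phi=c\cdot\mathrm{id}$. Then the two module structures literally coincide. Equating the two expressions in \eqref{4.23} for $E(\textbf m)\cdot 1$ and comparing coefficients of $\partial_1$ gives $\lambda_1^{m_1}\lambda_2^{m_2}=\lambda_1'^{m_1}\lambda_2'^{m_2}$, hence $\bm\lambda=\bm\lambda'$, and then the constant terms give $m_2\beta_1-m_1\beta_2=m_2\beta_1'-m_1\beta_2'$ for all $\textbf m$, hence $\bm\beta=\bm\beta'$; equating the two expressions in \eqref{equ2.6} then yields $k=k'$ and $b_{\textbf 0}=b_{\textbf 0}'$.

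For the ``only if'' direction of $(1)$ the operators $\partial_1,\partial_2$ are unavailable, so the invariants must be recovered one at a time. Since $\textbf p=(0,0)$ we have $E(\textbf 0)=0$ and $t^{\textbf 0}$ is central in $\widetilde{L}(0,0)$, acting by the scalar $b_{\textbf 0}$ (resp. $b_{\textbf 0}'$), so $b_{\textbf 0}=b_{\textbf 0}'$; and from $t^{\textbf m}t^{-\textbf m}=k^{2}\,\mathrm{id}$ on $\Omega(\bm\lambda,\bm\beta,b_{\textbf 0},k)$ we get $k^{2}=k'^{2}$. Now split on $k$. If $k\neq 0$, the constant polynomial $1$ is, up to a scalar, the unique common eigenvector of the commuting family $\{\,t^{\textbf m}:\textbf m\neq\textbf 0\,\}$ (a polynomial scaled by every integer translation must be constant), so $\Phi(1)=c\cdot 1$; comparing the eigenvalues $\lambda_1^{m_1}\lambda_2^{m_2}k$ on the two sides for $\textbf m=(1,0),(0,1),(2,0)$ forces $\bm\lambda=\bm\lambda'$ and $k=k'$. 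If $k=0$, then $t^{\textbf m}$ acts as $0$ for $\textbf m\neq\textbf 0$, so the same holds on the target and $k'=0$; the $L(0,0)$-module $\Omega(\bm\lambda,\bm\beta,b_{\textbf 0},0)$ then coincides with the $\mathcal B(0,0)$-module $\Omega(\bm\lambda,\bm\beta)$ with $t^{\textbf 0}$ acting as $b_{\textbf 0}$, and restricting $\Phi$ to $\mathcal B(0,0)$, the $\textbf p=(0,0)$ analogue of Theorem \ref{theo2.4}$(1)$ (proved by the same method) gives $\bm\lambda=\bm\lambda'$.

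The main obstacle is exactly the recovery of $\bm\lambda$ in part $(1)$: $\bm\lambda$ enters only through the scalar factors $\lambda_1^{m_1}\lambda_2^{m_2}$, which cancel in the obvious operator identities, and there is no grading operator inside $L(0,0)$ to detect it. For $k\neq 0$ the shift operators $t^{\textbf m}$ settle this through the distinguished eigenvector $1$; for $k=0$ one is forced back onto the $\mathcal B(0,0)$-classification, whose proof requires identifying $\Omega''(\bm\lambda,\bm\beta)=(\partial_1+\beta_1)\Omega+(\partial_2+\beta_2)\Omega$ as the unique codimension-one $\mathcal B(0,0)$-submodule of $\Omega(\bm\lambda,\bm\beta)$ (using that $\mathcal B(0,0)$ is perfect and $\mathcal B(0,0)\cdot\Omega=\Omega''$, so that $\Phi$ must preserve it), followed by a leading-term analysis of $\Phi$. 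Throughout one must keep in mind that $\Omega'$ and $\Omega''$ are submodules only for $\mathcal B$ and $\widetilde{\mathcal B}$, not for $L$ or $\widetilde{L}$ — consistent with the simplicity established in Theorem \ref{th3.15}.
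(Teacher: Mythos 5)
Your proposal is correct in substance but takes a genuinely different route from the paper in both ``only if'' directions. For part $(2)$, your observation that an $\widetilde{L}(0,0)$-isomorphism commutes with the left-multiplication action of $\partial_1,\partial_2$, hence is multiplication by a unit of $\mathbb{C}[\partial_1,\partial_2]$ and therefore a nonzero scalar, is cleaner and more self-contained than the paper's argument, which first runs the part-$(1)$ computation and then extracts $\bm{\beta}=\bm{\beta}'$ by applying the isomorphism to $E(0,1)(\partial_1+\beta_1)$ and comparing top-degree coefficients; once $\Phi=c\cdot\mathrm{id}$ you read off all parameters at once, with no case distinction on $k$. For part $(1)$, the paper applies $\phi$ to $t^{(0,1)}$, $t^{(0,2)}$ and $t^{(0,0)}$ acting on $(\partial_1+\beta_1)^i$ to get $\lambda_2k=\lambda_2'k'$, $\lambda_2^2k=\lambda_2'^2k'$ and $b_{\textbf{0}}=b_{\textbf{0}}'$ and then divides; your joint-eigenvector argument for the constant $1$ reaches the same equations and is equally valid when $k\neq 0$.

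The one genuine weak point is the branch $k=0$ of part $(1)$, which you rightly isolate: there the $t^{\textbf{m}}$, $\textbf{m}\neq\textbf{0}$, act by zero and carry no information about $\bm{\lambda}$, and you appeal to ``the $\textbf{p}=(0,0)$ analogue of Theorem \ref{theo2.4}$(1)$.'' That statement is true, but it is nowhere in the paper (Theorem \ref{theo2.4} is stated only for $\textbf{p}\in\mathbb{C}^{\times}\times\mathbb{C}^{\times}$, and Theorems \ref{th3.13}--\ref{th3.14} concern $\mathcal{L}=\widetilde{\mathcal B}(0,0)$, which contains $\partial_1,\partial_2$), so as written your proof discharges this case only by citation. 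Your sketch does complete easily: intertwining $E(0,1)$ and $E(0,2)$ applied to $1$ gives $\Phi(\partial_1+\beta_1)=(\lambda_2'/\lambda_2)(\partial_1+\beta_1')(\Phi 1)(\partial_1,\partial_2-1)$ and $\Phi(\partial_1+\beta_1)=(\lambda_2'/\lambda_2)^2(\partial_1+\beta_1')(\Phi 1)(\partial_1,\partial_2-2)$, and comparing leading coefficients forces $\lambda_2'/\lambda_2=1$, with $\lambda_1=\lambda_1'$ symmetric; no appeal to preservation of $\Omega''$ is needed, and incidentally $\mathcal{B}(0,0)$ is not perfect ($E(\textbf{0})$ is not a commutator), though that plays no real role. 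Be aware that the paper's own proof is silent on $k=0$ --- its step ``then $\lambda_2=\lambda_2', k=k'$'' divides by $k$ --- so your explicit case split exposes a gap in the published argument rather than creating a new one; it just needs the missing half-page of computation to be written out.
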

\begin{proof}
	$(1)$ If $\bm{\lambda}=\bm{\lambda}',b_{\textbf{0}}=b_{\textbf{0}}',k=k'$, then $\Omega(\bm{\lambda},\bm{\beta},b_{\textbf{0}},k)\cong\Omega(\bm{\lambda}',\bm{\beta}',b_{\textbf{0}}',k')$ as $L(0,0)$-modules, and the linear map $\phi_{\bm{\beta},\bm{\beta}'}:\Omega(\bm{\lambda},\bm{\beta},b_{\textbf{0}},k)\to\Omega(\bm{\lambda},\bm{\beta}',b_{\textbf{0}},k),(\partial_1+\beta_1)^i(\partial_2+\beta_2)^j\mapsto(\partial_1+\beta_1')^i(\partial_2+\beta_2')^j$ is an $L(0,0)$-module isomorphism. It is obvious that $\phi_{\bm{\beta},\bm{\beta}'}$ is a bijection. By (\ref{4.23}), we have
	\begin{center}
		$\begin{aligned}
			&\phi_{\bm{\beta},\bm{\beta}'}(E(\textbf{m})(\partial_1+\beta_1)^i(\partial_2+\beta_2)^j)\\
			=&\phi_{\bm{\beta},\bm{\beta}'}(\lambda_1^{m_1}\lambda_2^{m_2}(m_2(\partial_1+\beta_1)-m_1(\partial_2+\beta_2))(\partial_1+\beta_1-m_1)^i(\partial_2+\beta_2-m_2)^j)\\
			=&\lambda_1^{m_1}\lambda_2^{m_2}(m_2(\partial_1+\beta_1')-m_1(\partial_2+\beta_2'))(\partial_1+\beta_1'-m_1)^i(\partial_2+\alpha'_2-m_2)^j)\\
			=&E(\textbf{m})\phi_{\bm{\beta},\bm{\beta}'}((\partial_1+\beta_1)^i(\partial_2+\beta_2)^j),
		\end{aligned}$
	\end{center}
	and by (\ref{equ2.6}), we can also get $\phi_{\bm{\beta},\bm{\beta}'}(t^{\textbf{m}}(\partial_1+\beta_1)^i(\partial_2+\beta_2)^j)=t^{\textbf{m}}\phi_{\bm{\beta},\bm{\beta}'}((\partial_1+\beta_1)^i(\partial_2+\beta_2)^j)$.\par
	If $\Omega(\bm{\lambda},\bm{\beta},b_{\textbf{0}},k)\cong\Omega(\bm{\lambda}',\bm{\beta}',b_{\textbf{0}}',k')$ as $L(0,0)$-modules, there exists an isomorphism $\phi:\Omega(\bm{\lambda},\bm{\beta},b_{\textbf{0}},k)\to\Omega(\bm{\lambda},\bm{\beta}',b_{\textbf{0}},k)$ such that $\phi(E(\textbf{m})f(\partial_1,\partial_2))=E(\textbf{m})\phi(f(\partial_1,\partial_2))$ and $\phi(t^{\textbf{m}}f(\partial_1,\partial_2))$ $=t^{\textbf{m}}\phi(f(\partial_1,\partial_2))$. We consider $t^{(0,1)}$ acts on $(\partial_1+\beta_1)^i$ of $\Omega(\bm{\lambda},\bm{\beta},b_{\textbf{0}},k)$, then we get $t^{(0,1)}(\partial_1+\beta_1)^i=\lambda_2k(\partial_1+\beta_1)^i$. Apply $\phi$ to both sides of the equation to obtain $\phi(t^{(0,1)}(\partial_1+\beta_1)^i)=t^{(0,1)}\phi((\partial_1+\beta_1)^i)=\lambda_2k\phi((\partial_1+\beta_1)^i)$. We denote $\phi((\partial_1+\beta_1)^i)=f(\partial_1,\partial_2)\in\Omega(\bm{\lambda}',\bm{\beta}',b_{\textbf{0}}',k')$, then $t^{(0,1)}f(\partial_1,\partial_2)=\lambda_2kf(\partial_1,\partial_2)=\lambda_2'k'f(\partial_1,\partial_2-1)$, the coefficients of the highest degree terms imply $\lambda_2k=\lambda_2'k'$. Similarly, we consider the action of $t^{(0,2)}$ and $t^{(0,0)}$ on $(\partial_1+\beta_1)^i$, and get $\lambda_2^2k=\lambda_2'^2k'$ and $b_{\textbf{0}}=b_{\textbf{0}}'$. Then $\lambda_2=\lambda_2',k=k'$. It is similar to show $\lambda_1=\lambda_1'$. Hence, we have $\bm{\lambda}=\bm{\lambda}',b_{\textbf{0}}=b_{\textbf{0}}',k=k'$.\par
	$(2)$ If $\bm{\lambda}=\bm{\lambda}',\bm{\beta}=\bm{\beta}',b_{\textbf{0}}=b_{\textbf{0}}',k=k'$, it is obvious that $\Omega(\bm{\lambda},\bm{\beta},b_{\textbf{0}},k)\cong\Omega(\bm{\lambda}',\bm{\beta}',b_{\textbf{0}}',k')$ as $\widetilde{L}(0,0)$-modules, and $id$ is an isomorphism.\par
	If $\Omega(\bm{\lambda},\bm{\beta},b_{\textbf{0}},k)\cong\Omega(\bm{\lambda}',\bm{\beta}',b_{\textbf{0}}',k')$ as $\widetilde{L}(0,0)$-modules, then $\Omega(\bm{\lambda},\bm{\beta},b_{\textbf{0}},k)\cong\Omega(\bm{\lambda}',\bm{\beta}',b_{\textbf{0}}',k')$ as $L(0,0)$-modules, thus $\bm{\lambda}=\bm{\lambda}',b_{\textbf{0}}=b_{\textbf{0}}',k=k'$. We consider $E(0,1)$ acts on $(\partial_1+\beta_1)$ of $\Omega(\bm{\lambda},\bm{\beta},b_{\textbf{0}},k)$, i.e. $E(0,1)(\partial_1+\beta_1)=\lambda_2(\partial_1+\beta_1)^2$. Apply $\phi$ to both sides of the equation to obtain $\phi(E(0,1)(\partial_1+\beta_1))=E(0,1)\phi(\partial_1+\beta_1)=\lambda_2\phi((\partial_1+\beta_1)^2)=E(0,1)\phi(\partial_1+\beta_1)=E(0,1)(\partial_1+\beta_1)\phi(1)$. We denote $\phi(1)=g(\partial_1,\partial_2)\in\Omega(\bm{\lambda}',\bm{\beta}',b_{\textbf{0}}',k')$, then $E(0,1)(\partial_1+\beta_1)g(\partial_1,\partial_2)=\lambda_2(\partial_1+\beta_1)^2g(\partial_1,\partial_2)=\lambda_2'(\partial_1+\beta_1')(\partial_1+\beta_1)g(\partial_1,\partial_2-1).$ So we have $(\partial_1+\beta_1)g(\partial_1,\partial_2)=(\partial_1+\beta_1')g(\partial_1,\partial_2-1)$, then we see the coefficient of $\partial_1^M\partial_2^N$ in the equation and get $\beta_1=\beta_1'$, where $\partial_1^M\partial_2^N$ is the highest degree item of $g(\partial_1,\partial_2)$. It is similar to show $\beta_2=\beta_2'$. Hence, we have $\bm{\lambda}=\bm{\lambda}',\bm{\beta}=\bm{\beta}',b_{\textbf{0}}=b_{\textbf{0}}',k=k'$.
\end{proof}

\section{Classification of $\widetilde{L}(p_1, p_2)$-modules as free $U(\mathfrak{h})$-modules of rank one}
In this section, we will determine all $\widetilde{L}(p_1, p_2)$-modules as $U(\mathfrak{h})$-free modules of rank 1, where $p_1,p_2\in\mathbb{C}$, and $\mathfrak{h}=\mathbb{C}\partial_1\oplus\mathbb{C}\partial_2$ is the canonical Cartan subalgebra of $\widetilde{L}(p_1, p_2)$.\par
First, we show the following technical lemma on polynomials: nontrivial translation invariant two-variable polynomials are linear polynomials.
	\begin{lemm}\label{lemm3.0}
		Suppose $F(Y,Z)=F(Y-q_1,Z-q_2)\in\mathbb{C}[Y,Z]$ for some $q_1,q_2\in\mathbb{C}$, then we have
		\begin{center}
			$\left\{\begin{array}{l} F(Y,Z)\in\mathbb{C}[Y]~~\mbox{if}~~q_1=0,q_2\neq0,
				\\F(Y,Z)\in\mathbb{C}[Z]~~\mbox{if}~~q_1\neq0,q_2=0,
				\\F(Y,Z)=z_0+z_1Y+z_2Z~~\mbox{if}~~q_1\neq0,q_2\neq0\mbox{, where}~~z_1q_1=-z_2q_2.
			\end{array}\right.$
		\end{center}
	\end{lemm}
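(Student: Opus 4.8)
The plan is to argue by iterated differencing. For a polynomial $F(Y,Z)$ and $q=(q_1,q_2)$, introduce the difference operator $(\Delta_q F)(Y,Z) = F(Y,Z) - F(Y-q_1,Z-q_2)$; the hypothesis says $\Delta_q F = 0$. The key observation is that $\Delta_q$ strictly lowers degree: if $F$ has total degree $d\ge 1$ with homogeneous top part $F_d$, then the top-degree part of $\Delta_q F$ is $q_1\,\partial_Y F_d + q_2\,\partial_Z F_d$, which has degree $d-1$ and is nonzero unless $F_d$ is annihilated by the directional derivative $q_1\partial_Y + q_2\partial_Z$. So $\Delta_q F = 0$ forces $(q_1\partial_Y + q_2\partial_Z)F_d = 0$ for the leading part, and then one peels off $F_d$ and repeats on $F - F_d$.

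First I would dispose of the two degenerate cases. If $q_1 = 0$, $q_2\neq 0$, then for each fixed $Y$ the one-variable polynomial $Z\mapsto F(Y,Z)$ is invariant under $Z\mapsto Z-q_2$; a nonzero one-variable polynomial cannot be periodic, so $F(Y,Z)$ is constant in $Z$, i.e. $F\in\mathbb{C}[Y]$. The case $q_1\neq 0$, $q_2 = 0$ is symmetric, giving $F\in\mathbb{C}[Z]$. These two cases are genuinely easy and need no degree bookkeeping.

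The main case is $q_1\neq 0$, $q_2\neq 0$. Here the directional derivative $L := q_1\partial_Y + q_2\partial_Z$ is, after the linear change of variables $U = q_2 Y - q_1 Z$, $V = (\text{anything independent})$, essentially $\partial_V$ up to scalar; its kernel on homogeneous polynomials of degree $d$ consists of the scalar multiples of $U^d = (q_2Y - q_1Z)^d$. So the argument becomes: write $F = \sum_{j=0}^d F_j$ with $F_j$ homogeneous of degree $j$; the top part must satisfy $L F_d = 0$, hence $F_d = c_d(q_2 Y - q_1 Z)^d$; but then $\Delta_q\big(c_d(q_2Y-q_1Z)^d\big) = 0$ identically, since $q_2Y - q_1Z$ is itself $\Delta_q$-invariant (it is killed by the shift $(Y,Z)\mapsto(Y-q_1,Z-q_2)$ because $q_2q_1 - q_1q_2 = 0$). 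Therefore $F - F_d$ is again $\Delta_q$-invariant of degree $< d$, and by induction on $d$ we conclude $F = \sum_j c_j(q_2Y - q_1Z)^j$ for scalars $c_j$. It remains only to see that the hypothesis forces all $c_j = 0$ for $j\ge 2$: a power $(q_2Y-q_1Z)^j$ with $j\ge 2$ is not invariant under \emph{translation by $q$ in every coordinate}—wait, it is—so I must instead use that the statement claims $F$ is of the special linear form, which is exactly $c_0 + c_1(q_2Y - q_1Z)$, i.e. $z_0 = c_0$, $z_1 = c_1 q_2$, $z_2 = -c_1 q_1$, automatically satisfying $z_1 q_1 = -z_2 q_2$. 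So in fact the correct reading is that \emph{every} $\Delta_q$-invariant $F$ is a polynomial in $q_2Y - q_1Z$, and the lemma's third line should really only be asserting linearity; I would double-check against the intended application and, if higher powers are genuinely excluded, that must come from an additional constraint not visible in the isolated lemma. The cleanest safe route is: prove $F \in \mathbb{C}[q_2Y - q_1Z]$ by the induction above, observe this lies in $\mathbb{C}[Y,Z]$ with $z_1 q_1 + z_2 q_2 = 0$ relation holding degree-by-degree, and present the linear case as the instance used downstream.

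The one real obstacle is the bookkeeping in the leading-term analysis: one must be careful that $\Delta_q F_d$ can have degree \emph{lower} than $d-1$ (indeed it vanishes entirely when $F_d \in \ker L$), so the induction is on total degree of $F$ and not a naive "match coefficients in each degree" argument—each stage removes the top homogeneous piece only after verifying it is forced into $\ker L$, which is what makes $\Delta_q$ annihilate it. Everything else is routine polynomial algebra.
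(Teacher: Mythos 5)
Your treatment of the first two cases is correct and is in substance the same as the paper's (the paper compares coefficients of $Y^yZ^{N-1}$; you invoke the fact that a nonconstant one-variable polynomial cannot be periodic -- the same argument). The interesting part is the third case, and your hesitation there is fully justified: the third clause of Lemma \ref{lemm3.0} is \emph{false as stated}. Your own computation shows why: the linear form $U=q_2Y-q_1Z$ satisfies $U(Y-q_1,Z-q_2)=U(Y,Z)$, hence $U^d$ is translation-invariant for every $d$, and already $F=(q_2Y-q_1Z)^2$ (e.g.\ $F=(Y-Z)^2$ with $q_1=q_2=1$) satisfies the hypothesis but is not of the form $z_0+z_1Y+z_2Z$. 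Your leading-term induction proves the correct statement, namely $F\in\mathbb{C}[q_2Y-q_1Z]$, of which the lemma's conclusion describes only the degree $\le 1$ elements. The paper's own proof breaks down precisely in the subcase $M>1$, $N>1$: after applying $\partial_Z^{N-1}$ to both sides it asserts ``we know it is impossible,'' but differentiation commutes with translation, so the resulting identity is perfectly consistent (for $F=(Y-Z)^2$ one gets $-2Y+2Z=-2(Y-1)+2(Z-1)$, which holds). So there is a genuine gap, but it is in the paper, not in your argument.

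You should also note the downstream consequences, since the false clause is actually invoked. It is used to conclude that $h_{\textbf{m}}(X_{\textbf{n}},X_{\textbf{p}})$ is linear when $\Delta_{\textbf{m}}=0$, and in case (3) of Lemma \ref{lemm3.7} to conclude that $g_{\textbf{m}}$ is linear when $p_1,p_2\neq0$. Both uses can be repaired with the corrected conclusion $F\in\mathbb{C}[q_2Y-q_1Z]$, but each needs an extra line: for $h_{\textbf{m}}$ one gets $h_{\textbf{m}}\in\mathbb{C}[X_{\textbf{n}}+X_{\textbf{p}}]$ from $\textbf{n}$ and $h_{\textbf{m}}\in\mathbb{C}[X_{\textbf{p}}-X_{\textbf{n}}]$ from $-\textbf{n}$, and these two subrings meet only in the constants since $X_{\textbf{n}},X_{\textbf{p}}$ are independent; for $g_{\textbf{m}}$ one gets $g_{\textbf{m}}=G(X_{\textbf{p}})$ and then relation \eqref{3.6} with $\Delta_{\textbf{n}}\neq0$ forces $G(X_{\textbf{p}})-G(X_{\textbf{p}}-\Delta_{\textbf{n}})$ to be constant, hence $\deg G\le1$ whenever the $t$-action is nontrivial. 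So the paper's final classification survives, but the lemma should be restated as $F\in\mathbb{C}[q_2Y-q_1Z]$ (with the degenerate cases as special instances) and its applications adjusted; your proposed proof is exactly the right proof of that corrected statement.
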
\par
\begin{proof}
	Write $F(Y,Z)=\sum\limits_{k=0}^M\sum\limits_{l=0}^Na_{kl}Y^kZ^l$, then the assumption says that
	\begin{equation}\label{3.1.1}
		\sum\limits_{k=0}^M\sum\limits_{l=0}^Na_{kl}Y^kZ^l=\sum\limits_{k=0}^M\sum\limits_{l=0}^Na_{kl}(Y-q_1)^k(Z-q_2)^l,
	\end{equation}
and let $x,y$ be the maximum integers such that $a_{Mx}\neq0,a_{yN}\neq0$. \\
Case \textbf {(1)} $q_1=0,q_2\neq0$.\par
The equation (\ref{3.1.1}) is then written as:
	\begin{equation}\label{3.1.2}
	\sum\limits_{k=0}^M\sum\limits_{l=0}^Na_{kl}Y^kZ^l=\sum\limits_{k=0}^M\sum\limits_{l=0}^Na_{kl}Y^k(Z-q_2)^l.
    \end{equation}\par
If $N\ge1$, then the coefficient of $Y^y Z^{N-1}$ implies that $a_{y,N-1}=a_{y,N-1}-a_{yN}Nq_2$, so $a_{yN}Nq_2=0$, which contradicts with $a_{yN}\neq0,N\neq0$ and $q_2\neq0$. Thus, $N=0$ and $F(Y,Z)\in\mathbb{C}[Y]$.\\
Case \textbf{(2)} $q_1\neq0,q_2=0$.\par
This is similar to case \textbf{(1)}.\\
\textbf{(3)} $q_1\neq0,q_2\neq0$.\par
If $M>1,N\le1$, then the equation (\ref{3.1.1}) is
\begin{center}
	$\sum\limits_{k=0}^Ma_{k0}Y^k+\sum\limits_{k=0}^Ma_{k1}Y^kZ=\sum\limits_{k=0}^Ma_{k0}(Y-q_1)^k+\sum\limits_{k=0}^Ma_{k1}(Y-q_1)^k(Z-q_2),$
\end{center}
then we apply $\frac{\partial}{\partial Z}$ to both sides to get that $\sum\limits_{k=0}^Ma_{k1}Y^k=\sum\limits_{k=0}^Ma_{k1}(Y-q_1)^k$, which condradicts $M>1$. It is similar for $M\le1,N>1$. So $M>1,N>1$ or $M\le1,N\le1$.\par
If $M>1,N>1$, we could apply $\frac{\partial^{N-1}}{\partial Z^{N-1}} $ to both sides of (\ref{3.1.1}) and get
\begin{center}
	$\sum\limits_{k=0}^M(N-1)!a_{k,N-1}Y^k+\sum\limits_{k=0}^MN!a_{kN}Y^kZ=\sum\limits_{k=0}^M(N-1)!a_{k,N-1}(Y-q_1)^k+\sum\limits_{k=0}^MN!a_{kN}(Y-q_1)^k(Z-q_2),$
\end{center}
we know it is impossible.\par
Hence, we have $M\le1,N\le1$, we could denote $F(Y,Z)=z_0+z_1Y+z_2Z+z_{11}YZ$, by the equation (\ref{3.1.1}), we get $z_{11}=0$ and $z_1,z_2$ satisfy $z_1q_1=-z_2q_2$.
\end{proof}
    Let $V$ be a $\widetilde{L}(p_1, p_2)$-module such that its restriction to $U(\mathfrak{h})$ is a free module of rank 1. 
    Choose a homogeneous basis element $1\in V$, we have $V=U(\mathfrak{h})\cdot1=\mathbb{C}[\partial_1,\partial_2]\cdot1$.  For $\textbf{p}\in\mathbb C^2$ and $\bm{\partial}=(\partial_1, \partial_2)$, the polynomial $f(\bm{\partial}+\textbf{p})=f(\partial_1+p_1, \partial_2+p_2)$ is similarly defined as below \eqref{2.20}.

\begin{lemm}\label{lemm3.1}
	For any $\textbf{m}=(m_1,m_2)\in\mathbb{Z}\times\mathbb{Z}$, $\textbf{p}=(p_1, p_2)\in\mathbb{C}\times\mathbb{C}$,  $f(\bm{\partial})=f(\partial_1,\partial_2)\in\mathbb{C}[\partial_1,\partial_2]=U(\mathfrak{h})$, then $E(\textbf{m})f(\bm{\partial})\cdot1=f(\bm{\partial}-\textbf{m})E(\textbf{m})\cdot1$ and $t^{\textbf{m}}f(\bm{\partial})\cdot1=f(\bm{\partial}-\textbf{m}-\textbf{p})t^{\textbf{m}}\cdot1.$
	\begin{proof}
		Since $[\partial_1,E(\textbf{m})]=m_1E(\textbf{m}),[\partial_2,E(\textbf{m})]=m_2E(\textbf{m})$, then we can obtain $E(\textbf{m})\partial_1=(\partial_1-m_1)E(\textbf{m}),E(\textbf{m})\partial_2=(\partial_2-m_2)E(\textbf{m})$.
		Using induction on $n$, we assume that  $E(\textbf{m})\partial_1^n=(\partial_1-m_1)^nE(\textbf{m}),E(\textbf{m})\partial_2^n=(\partial_2-m_2)^nE(\textbf{m}),n\in\mathbb{N}$, then $E(\textbf{m})\partial_1^{n+1}=E(\textbf{m})\partial_1^n\partial_1=(\partial_1-m_1)^nE(\textbf{m})\partial_1=(\partial_1-m_1)^{n+1}E(\textbf{m})$. Similarly, we have $E(\textbf{m})\partial_2^{n+1}=(\partial_2-m_2)^{n+1}E(\textbf{m})$. Hence, $E(\textbf{m})\partial_1^n=(\partial_1-m_1)^nE(\textbf{m}),E(\textbf{m})\partial_2^n=(\partial_2-m_2)^nE(\textbf{m})$ for all $n\in\mathbb{N}$.\par
		Since $[\partial_1,t^{\textbf{m}}]=(m_1+p_1)t^{\textbf{m}},[\partial_2,t^{\textbf{m}}]=(m_2+p_2)t^{\textbf{m}}$, we obtain that $t^{\textbf{m}}\partial_1=(\partial_1-m_1-p_1)t^{\textbf{m}},t^{\textbf{m}}\partial_2=(\partial_2-m_2-p_2)t^{\textbf{m}}$. Using the induction, we can get $t^{\textbf{m}}\partial_1^n=(\partial_1-m_1-p_1)^nt^{\textbf{m}},t^{\textbf{m}}\partial_2^n=(\partial_2-m_2-p_2)^nt^{\textbf{m}}$ for all $n\in\mathbb{N}$.
	\end{proof}
\end{lemm}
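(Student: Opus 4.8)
The statement to prove is Lemma~\ref{lemm3.1}, but as written the ``proof'' only establishes the commutation identities $E(\textbf{m})\partial_i^n=(\partial_i-m_i)^nE(\textbf{m})$ and $t^{\textbf{m}}\partial_i^n=(\partial_i-m_i-p_i)^nt^{\textbf{m}}$ in $U(\widetilde L(p_1,p_2))$; I would organize my proof around passing from these monomial identities to the full polynomial statement. The plan is: first, reduce the claim to monomials $f(\bm\partial)=\partial_1^k\partial_2^l$ by linearity of the module action — both sides of each asserted equation are linear in $f$, so it suffices to check generators $\partial_1^k\partial_2^l$ of $\mathbb{C}[\partial_1,\partial_2]$. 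Second, for such a monomial, compute $E(\textbf{m})\partial_1^k\partial_2^l\cdot 1$ by repeatedly applying $[\partial_i,E(\textbf m)]=m_iE(\textbf m)$ to slide $E(\textbf m)$ to the right past the $\partial$'s: using $E(\textbf m)\partial_1=(\partial_1-m_1)E(\textbf m)$ and induction (exactly the computation already in the excerpt) gives $E(\textbf m)\partial_1^k\partial_2^l = (\partial_1-m_1)^k(\partial_2-m_2)^l E(\textbf m)$ as operators, hence $E(\textbf m)\partial_1^k\partial_2^l\cdot 1 = (\partial_1-m_1)^k(\partial_2-m_2)^l\, (E(\textbf m)\cdot 1)$, which is precisely $f(\bm\partial-\textbf m)E(\textbf m)\cdot 1$ for $f=\partial_1^k\partial_2^l$. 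Third, repeat verbatim for $t^{\textbf m}$ using $[\partial_i,t^{\textbf m}]=(m_i+p_i)t^{\textbf m}$, obtaining $t^{\textbf m}\partial_1^k\partial_2^l\cdot 1=(\partial_1-m_1-p_1)^k(\partial_2-m_2-p_2)^l\,(t^{\textbf m}\cdot 1)=f(\bm\partial-\textbf m-\textbf p)t^{\textbf m}\cdot 1$.

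In more detail, the key point that makes Step~2 work is that $\partial_1$ and $\partial_2$ commute (relation \eqref{2.13}), so we may first move $E(\textbf m)$ past all copies of $\partial_1$ and then past all copies of $\partial_2$ without the two operations interfering; each single move is the base case $E(\textbf m)\partial_i=(\partial_i-m_i)E(\textbf m)$, and an $n$-fold move is handled by the one-line induction $E(\textbf m)\partial_i^{n+1}=E(\textbf m)\partial_i^n\cdot\partial_i=(\partial_i-m_i)^nE(\textbf m)\partial_i=(\partial_i-m_i)^{n+1}E(\textbf m)$. After the monomial identity $E(\textbf m)\partial_1^k\partial_2^l=(\partial_1-m_1)^k(\partial_2-m_2)^lE(\textbf m)$ is in hand, apply both sides to the cyclic generator $1\in V$ and sum against the coefficients $a_{kl}$ of a general $f$; since the $a_{kl}$ are scalars and $\partial_1,\partial_2$ act by (commuting) left multiplication, $\sum_{k,l}a_{kl}(\partial_1-m_1)^k(\partial_2-m_2)^l\cdot(E(\textbf m)\cdot 1)=f(\partial_1-m_1,\partial_2-m_2)\cdot(E(\textbf m)\cdot 1)$ by definition of the polynomial $f(\bm\partial-\textbf m)$ recalled after \eqref{2.20}. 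The $t^{\textbf m}$ statement is identical with $m_i$ replaced by $m_i+p_i$.

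I do not anticipate a genuine obstacle here; this is a bookkeeping lemma whose only content is the two $\ad\partial_i$ eigenvalue relations together with linearity of the action. If I had to name the most delicate point, it is purely notational: being careful that ``$f(\bm\partial-\textbf m)$'' denotes the polynomial obtained by substitution and not the element $f(\bm\partial)\cdot 1$ shifted in some other sense, i.e.\ keeping straight that we are asserting an equality of elements of $V=\mathbb{C}[\partial_1,\partial_2]\cdot 1$ obtained by letting the operator $E(\textbf m)$ (resp.\ $t^{\textbf m}$) pass through the polynomial, not a statement inside $U(\mathfrak h)$ alone. One should also note that the identities hold as operators on all of $V$ (since $1$ is a free generator), so in fact $E(\textbf m)f(\bm\partial)=f(\bm\partial-\textbf m)E(\textbf m)$ and $t^{\textbf m}f(\bm\partial)=f(\bm\partial-\textbf m-\textbf p)t^{\textbf m}$ hold as operators on $V$, which is how the lemma will be used in the subsequent classification; recording it in this operator form costs nothing extra and is what the later arguments need.
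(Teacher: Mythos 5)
Your proposal is correct and follows essentially the same route as the paper: both rest on the inductive commutation identities $E(\textbf{m})\partial_i^n=(\partial_i-m_i)^nE(\textbf{m})$ and $t^{\textbf{m}}\partial_i^n=(\partial_i-m_i-p_i)^nt^{\textbf{m}}$ derived from the $\ad\partial_i$ relations. The only difference is that you spell out the reduction to monomials and the final summation by linearity, which the paper leaves implicit.
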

We write the action of $E(\textbf{m}),t^{\textbf{m}}$ on $1\in V$ as:
	\begin{equation}\label{a:act-E}
		E(\textbf{m})\cdot1=g_{\textbf{m}}(\bm{\partial})\cdot1,\quad t^{\textbf{m}}\cdot1=h_{\textbf{m}}(\bm{\partial})\cdot1,
	\end{equation}
	where $g_{\textbf{m}}(\bm{\partial})$ and $h_{\textbf{m}}(\bm{\partial})$ are polynomials in $\bm{\partial}$.
	
	By \eqref{e:L} and Lemma \ref{lemm3.1}, for any $\textbf{m}, \textbf{n}\in\mathbb{Z}\times\mathbb{Z}$ and $1\in V$, we have
\begin{equation}\label{3.5}
	h_{\textbf{n}}(\bm{\partial}-\textbf{m}-\textbf{p})h_{\textbf{m}}(\bm{\partial})-h_{\textbf{m}}(\bm{\partial}-\textbf{n}-\textbf{p})h_{\textbf{n}}(\bm{\partial})=0,
\end{equation}
	\begin{equation}\label{3.6}
		h_{\textbf{n}}(\bm{\partial}-\textbf{m})g_{\textbf{m}}(\bm{\partial})-g_{\textbf{m}}(\bm{\partial}-\textbf{n}-\textbf{p})h_{\textbf{n}}(\bm{\partial})=|\textbf n+\textbf p, \textbf m+\textbf p| h_{\textbf{m}+\textbf{n}}(\bm{\partial}),
	\end{equation}
	\begin{equation}\label{3.4}
	g_{\textbf{n}}(\bm{\partial}-\textbf{m})g_{\textbf{m}}(\bm{\partial})-g_{\textbf{m}}(\bm{\partial}-\textbf{n})g_{\textbf{n}}(\bm{\partial})=|\textbf n+\textbf p, \textbf m+\textbf p| g_{\textbf{m}+\textbf{n}}(\bm{\partial}).
\end{equation}\par
In the following, we discuss the situation depending on $\textbf{p}=(0,0)$ or not. 

\subsection{The case $\textbf{p}\neq(0,0)$}
\
\newline
\indent
For simplicity, we denote
	\begin{center}
		$\textbf{m}^\bot=(m_2,-m_1),$
	\end{center}
	\begin{center}
		$\bm{\partial}=(\partial_1,\partial_2),\quad\bm{\partial}-\textbf{m}=(\partial_1-m_1,\partial_2-m_2),$
	\end{center}
	\begin{center}
		$(\bm{\partial}|\textbf{m})=m_1\partial_1+m_2\partial_2,\quad(\textbf{m}|\textbf{n})=m_1n_1+m_2n_2,$
	\end{center}
	\begin{center}
		$X_{\textbf{m}}(\bm{\partial}+\textbf{r})=m_2(\partial_1+r_1)-m_1(\partial_2+r_2),\quad\Delta_{\textbf{m}}=({\textbf{m}}|{\textbf p}^{\perp})=p_2m_1-p_1m_2,$
	\end{center}
	where $\textbf{m},\textbf{n}\in\mathbb{Z}\times\mathbb{Z},\textbf{p}, \textbf{r}\in\mathbb{C}\times\mathbb{C}$. In the following, we briefly write $X_{\textbf{m}}$  for
	$X_\textbf{m}(\bm{\partial})=m_2\partial_1-m_1\partial_2$ and the notation $X_{\textbf p}$ is similarly defined.
We will compute $h_{\textbf{n}}(\bm{\partial})$ and $g_{\textbf{n}}(\bm{\partial})$ (see \eqref{a:act-E}) depending on $\Delta_{\textbf{m}}$ being zero or not.
	
\textbf{Subcase 1}: $\Delta_{\textbf{m}}\neq0$:\par
By Theorem 3.4 of \cite{GWL} and Theorem 3.9 of \cite{XSX}, we have the following theorem:
\begin{theo}\label{th3.2}
	Suppose $\Delta_{\textbf{m}}\neq0$ and $V$ is a $\mathcal{B}(p_1, p_2)$-module or a $\widetilde{\mathcal B}(p_1, p_2)$-module and is free of rank one
when restricted to $U(\mathfrak{h})$. Then up to a parity,  $V\cong\Omega(\bm{\lambda},\alpha)$ for some $\bm{\lambda}\in\mathbb{C}^{\times}\times\mathbb{C}^{\times}$ and $\alpha\in\mathbb{C}$ with the
module structure given by:
	\begin{center}
		$E(\textbf{m})f(\bm{\partial})=\lambda_1^{m_1}\lambda_2^{m_2}(X_{\textbf{p}}-\Delta_{\textbf{m}}\alpha+X_{\textbf{m}})f(\bm{\partial}-\textbf{m}),$
	\end{center}
	and $\partial_1,\partial_2$ act as left multiplication.
\end{theo}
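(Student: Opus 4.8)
The plan is to recognize this as the classification of rank-one $U(\mathfrak{h})$-free modules over the Block-type algebra, i.e.\ \cite[Theorem 3.4]{GWL} and \cite[Theorem 3.9]{XSX}, and to recall how that classification is obtained and how the explicit module structure is read off. Write the action on the cyclic generator as $E(\textbf{m})\cdot 1=g_{\textbf{m}}(\bm{\partial})\cdot 1$ as in \eqref{a:act-E}. By Lemma \ref{lemm3.1}, the bracket $[E(\textbf{m}),E(\textbf{n})]=-|\textbf{m}+\textbf{p},\textbf{n}+\textbf{p}|\,E(\textbf{m}+\textbf{n})$ applied to $1$ is exactly the functional equation \eqref{3.4}, so the problem reduces to solving \eqref{3.4} for the family $(g_{\textbf{m}})_{\textbf{m}\in\mathbb{Z}^2}$ of polynomials. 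The initial datum is pinned down: by Remark \ref{label4}(2), $E(\textbf{0})=p_2\partial_1-p_1\partial_2$, so $g_{\textbf{0}}(\bm{\partial})=X_{\textbf{p}}$, and the instance $\textbf{n}=\textbf{0}$ of \eqref{3.4} then holds automatically.

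The core step is to show each $g_{\textbf{m}}$ is affine-linear of the shape $g_{\textbf{m}}(\bm{\partial})=c_{\textbf{m}}(X_{\textbf{p}}-\Delta_{\textbf{m}}\alpha+X_{\textbf{m}})$ for scalars $c_{\textbf{m}}$ and one fixed $\alpha\in\mathbb{C}$. The key observation driving this is that in \eqref{3.4} the two top-degree homogeneous parts of the products on the left always cancel, so the left side has degree strictly smaller than $\deg g_{\textbf{m}}+\deg g_{\textbf{n}}$; consequently every relation with $|\textbf{m}+\textbf{p},\textbf{n}+\textbf{p}|\neq 0$ forces cancellation among the subleading terms as well, and a degree-and-divisibility analysis of these constraints along lattice directions where the determinant does not vanish (such directions exist because $\textbf{p}\neq(0,0)$) forces $\deg g_{\textbf{m}}\le 1$ for every $\textbf{m}$. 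Once this is known, comparing the degree-one and degree-zero coefficients in \eqref{3.4} fixes the linear part of $g_{\textbf{m}}$ as $c_{\textbf{m}}(X_{\textbf{p}}+X_{\textbf{m}})$ and the constant term as $-c_{\textbf{m}}\Delta_{\textbf{m}}\alpha$, with a single scalar $\alpha$ (read off, say, from $g_{e_1}$ for $e_1=(1,0)$). Re-substituting this ansatz into \eqref{3.4} and using that $\Omega(\bm{\lambda},\alpha)$ with $\bm{\lambda}=(1,1)$ is a module, the remaining scalar identities collapse to $c_{\textbf{m}+\textbf{n}}=c_{\textbf{m}}c_{\textbf{n}}$ for sufficiently many pairs, so $c_{\textbf{m}}=\lambda_1^{m_1}\lambda_2^{m_2}$ with $\lambda_i=c_{e_i}$; and $c_{\textbf{0}}=1=c_{\textbf{m}}c_{-\textbf{m}}$ forces $\lambda_1,\lambda_2\in\mathbb{C}^{\times}$. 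Since $(m_2+p_2)(\partial_1+p_1\alpha)-(m_1+p_1)(\partial_2+p_2\alpha)=X_{\textbf{p}}-\Delta_{\textbf{m}}\alpha+X_{\textbf{m}}$, the resulting action is precisely that of $\Omega(\bm{\lambda},\alpha)$ in \eqref{2.19}, and conversely \eqref{2.19} visibly satisfies all the defining relations, so the two are isomorphic.

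The step I expect to be the real obstacle is the degree bound $\deg g_{\textbf{m}}\le 1$: the coefficient $|\textbf{m}+\textbf{p},\textbf{n}+\textbf{p}|$ vanishes for many pairs $(\textbf{m},\textbf{n})$, so not every instance of \eqref{3.4} gives usable information, and one must organize the argument around a sublattice of directions on which this determinant and the invariants $\Delta_{\bullet}$ are simultaneously nonzero, bound the degrees there by a divisibility argument in $\mathbb{C}[\partial_1,\partial_2]$, and then recover the remaining $g_{\textbf{m}}$ by a further application of \eqref{3.4}. This is precisely the bookkeeping carried out in \cite{GWL,XSX}, and the hypothesis $\Delta_{\textbf{m}}\neq 0$ is exactly what guarantees enough such directions. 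Finally, the qualifier ``up to a parity'' records the one residual discrete normalization choice for the generators, the two outcomes being interchanged by the evident twist; in the $\widetilde{\mathcal{B}}(p_1,p_2)$ case one notes in addition that $\partial_1,\partial_2$ are forced to act by ordinary left multiplication, which is immediate from $V=\mathbb{C}[\partial_1,\partial_2]\cdot 1$.
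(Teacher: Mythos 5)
The paper does not actually prove Theorem~\ref{th3.2}: it is imported verbatim from the literature, the entire ``proof'' being the sentence ``By Theorem 3.4 of \cite{GWL} and Theorem 3.9 of \cite{XSX}, we have the following theorem.'' Your first move --- recognizing the statement as exactly that cited classification --- is therefore the same as the paper's, and at the level the paper operates your proposal is fine. Since you go further and sketch how the cited result is established, two caveats on that sketch. First, the decisive step, the bound $\deg g_{\textbf{m}}\le 1$ for all $\textbf{m}$, is asserted rather than carried out: the top-degree cancellation you note only yields $\deg g_{\textbf{m}+\textbf{n}}\le \deg g_{\textbf{m}}+\deg g_{\textbf{n}}-1$ when $|\textbf{n}+\textbf{p},\textbf{m}+\textbf{p}|\neq 0$, which by itself does not force linearity; you correctly flag this as ``the real obstacle,'' but then defer it entirely to \cite{GWL,XSX}, so the sketch is not self-contained. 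Second, the initial normalization $g_{\textbf{0}}=X_{\textbf{p}}$ ``by Remark~\ref{label4}(2)'' is not legitimate for an abstract $\mathcal{B}(p_1,p_2)$- or $\widetilde{\mathcal B}(p_1,p_2)$-module: that remark describes one concrete realization of the algebra, not a relation holding in every module, and indeed in the paper's own later analysis (proof of Lemma~\ref{lemm3.7}) the constant term of $g_{\textbf{0}}$ is first written as an unknown $d_{\textbf{0}}(p_1,p_2)$ and only then shown to vanish by substituting $\textbf{m}=-\textbf{n}$ into \eqref{3.4}. Neither caveat makes your route wrong --- it is the route the references take --- but as written the proposal is a correct identification of the source plus a plausible outline, not a proof.
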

Now we calculate $h_{\textbf{n}}(\bm{\partial})$ and $g_{\textbf{n}}(\bm{\partial})$:
\begin{lemm}\label{a:bm}
	If $\Delta_{\textbf{m}}\neq0$, then $g_{\textbf{m}}(\bm{\partial})=g_{\textbf{m}}(X_{\textbf{m}},X_{\textbf{p}})=\lambda_1^{m_1}\lambda_2^{m_2}(X_{\textbf{p}}-\Delta_{\textbf{m}}\alpha+X_{\textbf{m}})$ and $h_{\textbf{m}}(\bm{\partial})=h_{\textbf{m}}(X_{\textbf{m}},X_{\textbf{p}})=b_{\textbf{m}}\in\mathbb{C}$.
	\begin{proof}
		Since an $L(p_1, p_2)$-module is also a $\mathcal{B}(p_1, p_2)$-module. Theorem \ref{th3.2} implies that $g_{\textbf{m}}(\bm{\partial})=\lambda_1^{m_1}\lambda_2^{m_2}(X_{\textbf{p}}-\Delta_{\textbf{m}}\alpha+X_{\textbf{m}})$ if $\Delta_{\textbf{m}}\neq0$.\par
		 If $k_1X_{\textbf{m}}+k_2X_{\textbf{p}}=\partial_1(k_1m_2+k_2p_2)-\partial_2(k_1m_1+k_2p_1)=0$, then
		 \begin{center}
		 	$\begin{pmatrix}
		 		m_1  & p_1\\
		 		m_2  & p_2
		 	\end{pmatrix}\begin{pmatrix}
		 	k_1\\
		 	k_2
	 	\end{pmatrix}=\begin{pmatrix}
	 	0\\
	 	0
 	\end{pmatrix}.$
		 \end{center}
		 When $\Delta_{\textbf{m}}=m_1p_2-m_2p_1\neq0$, $k_1=k_2=0$. Hence, $X_{\textbf{m}}$ and $X_{\textbf{p}}$ are linear independent, $\mathbb{C}[\partial_1,\partial_2]=\mathbb{C}[X_{\textbf{m}},X_{\textbf{p}}]$. So there exist $g_{\textbf{m}}(X_{\textbf{m}},X_{\textbf{p}}),h_{\textbf{m}}(X_{\textbf{m}},X_{\textbf{p}})\in\mathbb{C}[X_{\textbf{m}},X_{\textbf{p}}]$ such that $g_{\textbf{m}}(X_{\textbf{m}},X_{\textbf{p}})=g_{\textbf{m}}(\bm{\partial}),h_{\textbf{m}}(X_{\textbf{m}},X_{\textbf{p}})=h_{\textbf{m}}(\bm{\partial})$, where $X_{\textbf{m}}=X_{\textbf{m}}(\bm{\partial}),X_{\textbf{p}}=X_{\textbf{p}}(\bm{\partial})$.

Setting $\textbf{m}=\textbf{n}\neq\textbf{0}$ and $\Delta_{\textbf{m}}\neq0$ in (\ref{3.6}), then $h_{\textbf{m}}(X_{\textbf{m}},X_{\textbf{p}}-\Delta_{\textbf{m}})g_{\textbf{m}}(X_{\textbf{m}},X_{\textbf{p}})-g_{\textbf{m}}(X_{\textbf{m}}+\Delta_{\textbf{m}},X_{\textbf{p}}-\Delta_{\textbf{m}})h_{\textbf{m}}(X_{\textbf{m}},X_{\textbf{p}})=0$. Since $g_{\textbf{m}}(X_{\textbf{m}},X_{\textbf{p}})=g_{\textbf{m}}(X_{\textbf{m}}+\Delta_{\textbf{m}},X_{\textbf{p}}-\Delta_{\textbf{m}})$, we get $h_{\textbf{m}}(X_{\textbf{m}},X_{\textbf{p}})=h_{\textbf{m}}(X_{\textbf{m}},X_{\textbf{p}}-\Delta_{\textbf{m}})$. By Lemma \ref{lemm3.0}, we have $h_{\textbf{m}}(X_{\textbf{m}},X_{\textbf{p}})\in\mathbb{C}[X_{\textbf{m}}]$.

	 We set $(\textbf{m}|\textbf{n}^{\bot})=0,\Delta_{\textbf{n}}\neq0$ in (\ref{3.5}), then we get $h_{\textbf{n}}(X_{\textbf{n}}+\Delta_{\textbf{n}})h_{\textbf{m}}(X_{\textbf{m}})=h_{\textbf{m}}(X_{\textbf{m}}+\Delta_{\textbf{m}})h_{\textbf{n}}(X_{\textbf{n}})$, i.e. $[\sum\limits_{k=0}^M a_k(X_{\textbf{n}}+\Delta_{\textbf{n}})^k](\sum\limits_{l=0}^N a_l' X_{\textbf{m}}^l)=[\sum\limits_{l=0}^N a_l'(X_{\textbf{m}}+\Delta_{\textbf{m}})^l](\sum\limits_{k=0}^M a_k X_{\textbf{n}}^k)$. The coefficient of the item $X_{\textbf{n}}^{M-1} X_{\textbf{m}}^{N}$ satisfies $a_{M-1}a_N'+a_Ma_N'M\Delta_{\textbf{n}}=a_{M-1}a_N'$,
	 so  $M=0$. It is similar to get $N=0$. Hence,
	 \begin{equation}\label{3.8}
	 	h_{\textbf{m}}(X_{\textbf{m}},X_{\textbf{p}})=b_{\textbf{m}}\in\mathbb{C}~~\mbox{if}~~\Delta_{\textbf{m}}\neq0.
	 \end{equation}
 The proof is completed.
	\end{proof}
\end{lemm}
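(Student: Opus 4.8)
The plan is to bootstrap off the already-known $\mathcal{B}(p_1,p_2)$-action. A $\widetilde{L}(p_1,p_2)$-module $V$ which is free of rank one over $U(\mathfrak{h})$ becomes, upon restriction, a $\widetilde{\mathcal B}(p_1,p_2)$-module of the same type, so Theorem~\ref{th3.2} applies verbatim and already gives $g_{\textbf{m}}(\bm{\partial})=\lambda_1^{m_1}\lambda_2^{m_2}(X_{\textbf{p}}-\Delta_{\textbf{m}}\alpha+X_{\textbf{m}})$ whenever $\Delta_{\textbf{m}}\neq 0$. Thus all the remaining content is to show that $h_{\textbf{m}}$ is a constant, which I extract by feeding this $g_{\textbf{m}}$ into the compatibility identities \eqref{3.5} and \eqref{3.6}. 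The basic device is a change of coordinates: when $\Delta_{\textbf{m}}\neq0$ the linear forms $X_{\textbf{m}}=m_2\partial_1-m_1\partial_2$ and $X_{\textbf{p}}=p_2\partial_1-p_1\partial_2$ are linearly independent (their coefficient matrix has determinant $\pm\Delta_{\textbf{m}}$), so $\mathbb{C}[\partial_1,\partial_2]=\mathbb{C}[X_{\textbf{m}},X_{\textbf{p}}]$ and I may regard $g_{\textbf{m}},h_{\textbf{m}}$ as polynomials in $(X_{\textbf{m}},X_{\textbf{p}})$; in these coordinates the shifts $\bm{\partial}\mapsto\bm{\partial}-\textbf{m}$ and $\bm{\partial}\mapsto\bm{\partial}-\textbf{m}-\textbf{p}$ act by $(X_{\textbf{m}},X_{\textbf{p}})\mapsto(X_{\textbf{m}},X_{\textbf{p}}-\Delta_{\textbf{m}})$ and by the shear $(X_{\textbf{m}},X_{\textbf{p}})\mapsto(X_{\textbf{m}}+\Delta_{\textbf{m}},X_{\textbf{p}}-\Delta_{\textbf{m}})$, and $g_{\textbf{m}}$, which depends only on $X_{\textbf{m}}+X_{\textbf{p}}$, is invariant under the latter shear.

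I then set $\textbf{n}=\textbf{m}$ in \eqref{3.6}. The right-hand side vanishes since $|\textbf{m}+\textbf{p},\textbf{m}+\textbf{p}|=0$, and, after passing to the coordinates above and using the shear-invariance of $g_{\textbf{m}}$, the left-hand side collapses to $g_{\textbf{m}}(\bm{\partial})\bigl(h_{\textbf{m}}(X_{\textbf{m}},X_{\textbf{p}}-\Delta_{\textbf{m}})-h_{\textbf{m}}(X_{\textbf{m}},X_{\textbf{p}})\bigr)$. Since $\mathbb{C}[X_{\textbf{m}},X_{\textbf{p}}]$ is an integral domain and $g_{\textbf{m}}\neq0$, this forces $h_{\textbf{m}}(X_{\textbf{m}},X_{\textbf{p}})=h_{\textbf{m}}(X_{\textbf{m}},X_{\textbf{p}}-\Delta_{\textbf{m}})$, i.e. $h_{\textbf{m}}$ is invariant under a nonzero translation in its second variable, so Lemma~\ref{lemm3.0} (case $q_1=0,q_2\neq0$) gives $h_{\textbf{m}}\in\mathbb{C}[X_{\textbf{m}}]$. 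The identical computation with $\textbf{m}$ replaced by any $\textbf{n}$ having $\Delta_{\textbf{n}}\neq0$ yields $h_{\textbf{n}}\in\mathbb{C}[X_{\textbf{n}}]$.

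To finish I rule out positive degree in $X_{\textbf{m}}$. Pick $\textbf{n}\in\mathbb{Z}^2$ with $\Delta_{\textbf{n}}\neq0$, with $\textbf{n}$ not parallel to $\textbf{m}$ (so that $X_{\textbf{m}},X_{\textbf{n}}$ are independent and $\mathbb{C}[\partial_1,\partial_2]=\mathbb{C}[X_{\textbf{m}},X_{\textbf{n}}]$), and with $c_2:=(\textbf{m}|\textbf{n}^{\bot})+\Delta_{\textbf{m}}\neq0$; each of these conditions amounts to avoiding one line in $\mathbb{Z}^2$, so such $\textbf{n}$ exists. Tracking the shifts as before turns \eqref{3.5} into the genuine two-variable identity $h_{\textbf{n}}(X_{\textbf{n}}+c_1)h_{\textbf{m}}(X_{\textbf{m}})=h_{\textbf{m}}(X_{\textbf{m}}+c_2)h_{\textbf{n}}(X_{\textbf{n}})$, with $c_1$ an analogous constant that I will not need. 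Comparing the coefficient of $X_{\textbf{n}}^{\deg h_{\textbf{n}}}X_{\textbf{m}}^{\deg h_{\textbf{m}}-1}$ on the two sides produces $(\deg h_{\textbf{m}})\,c_2\,a_M a'_N=0$, where $a_M,a'_N$ are the (nonzero) leading coefficients of $h_{\textbf{n}},h_{\textbf{m}}$; since $c_2\neq0$ this forces $\deg h_{\textbf{m}}=0$, i.e. $h_{\textbf{m}}=b_{\textbf{m}}\in\mathbb{C}$. I expect the bookkeeping of how $X_{\textbf{m}},X_{\textbf{p}},X_{\textbf{n}}$ transform under the various translations to be the most error-prone part; and the one genuinely delicate point is this last step, where one must avoid choosing $\textbf{n}$ parallel to $\textbf{m}$ (then $X_{\textbf{m}},X_{\textbf{n}}$ would be proportional and the coefficient comparison illegitimate) — the generic choice of $\textbf{n}$ is precisely what makes the degree count go through.
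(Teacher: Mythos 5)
Your proposal is correct and, for most of its length, runs along the same track as the paper: both invoke Theorem \ref{th3.2} to identify $g_{\textbf{m}}$, both pass to the coordinates $(X_{\textbf{m}},X_{\textbf{p}})$ (legitimate exactly because $\Delta_{\textbf{m}}\neq0$), and both set $\textbf{n}=\textbf{m}$ in \eqref{3.6} and exploit the shear-invariance of $g_{\textbf{m}}$ together with Lemma \ref{lemm3.0} to conclude $h_{\textbf{m}}\in\mathbb{C}[X_{\textbf{m}}]$. The one place you genuinely diverge is the final degree-zero step, and there your version is the more defensible one. The paper kills the degree by taking $(\textbf{m}|\textbf{n}^{\bot})=0$ in \eqref{3.5}, i.e.\ $\textbf{n}$ \emph{parallel} to $\textbf{m}$; but then $X_{\textbf{n}}$ is a scalar multiple of $X_{\textbf{m}}$, so ``the coefficient of $X_{\textbf{n}}^{M-1}X_{\textbf{m}}^{N}$'' is not a well-defined extraction, and if one honestly collapses everything to the single variable $X_{\textbf{m}}$ (using $\Delta_{\textbf{n}}=c\Delta_{\textbf{m}}$ for $\textbf{n}=c\textbf{m}$) the comparison of subleading coefficients yields only $\deg h_{\textbf{n}}=\deg h_{\textbf{m}}$, not $0$. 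You instead choose $\textbf{n}$ \emph{non-parallel} to $\textbf{m}$ with $\Delta_{\textbf{n}}\neq0$ and $c_2=(\textbf{m}|\textbf{n}^{\bot})+\Delta_{\textbf{m}}\neq0$ (such $\textbf{n}$ exists, as you note), so that $X_{\textbf{m}},X_{\textbf{n}}$ are algebraically independent and the coefficient comparison $(\deg h_{\textbf{m}})\,c_2\,a_Ma_N'=0$ is legitimate; this is exactly the repair the paper's argument needs. One small residual point, which the paper shares: the final step tacitly assumes the auxiliary $h_{\textbf{n}}$ is not the zero polynomial (otherwise $a_M$ is undefined and the identity is vacuous); this is easily patched, e.g.\ by feeding a vanishing $h_{\textbf{n}}$ back into \eqref{3.6} with a suitable $\textbf{m}'$ satisfying $\textbf{m}'+\textbf{n}=\textbf{m}$ to force $h_{\textbf{m}}=0$, but it is worth a sentence.
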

The following lemma shows that there exists $\textbf{m}\in\mathbb{Z}\times\mathbb{Z}~(\Delta_{\textbf{m}}\neq0)$ such that the constant $h_{\textbf{m}}(\bm{\partial})=b_{\textbf{m}}\neq0$ if $t^{\textbf{m}}$ acts nontrivial.
\begin{lemm}\label{lemm3.4}
	If the action constant $b_{\textbf{n}}=0$ for any $\textbf{n}\in\mathbb{Z}\times\mathbb{Z}$ such that $\Delta_{\textbf{n}}\neq0$, then $h_{\textbf{m}}(\bm{\partial})=0$
	for all $\textbf{m}\in\mathbb{Z}\times\mathbb{Z}$, i.e. the actions of all $t^{\textbf{m}}$ are trivial.
	\begin{proof}
		We assume $h_{\textbf{m}}(\bm{\partial})=b_{\textbf{n}}=0$ for any $\textbf{n}\in\mathbb{Z}\times\mathbb{Z}$ such that $\Delta_{\textbf{n}}\neq0$. When $\Delta_{\textbf{m}}=0$, we set $h_{\textbf{m}}(\bm{\partial})=\sum\limits_{k=0}^M\sum\limits_{l=0}^Na_{kl}\partial_1^k\partial_2^l$.
		
		Taking $\textbf{m},\textbf{n}$ satisfy $\Delta_{\textbf{m}}\neq0,\Delta_{\textbf{n}}\neq0,\Delta_{\textbf{m}+\textbf{n}}=0$ and $\textbf{m}+\textbf{n}\neq-2\textbf{p}$ in (\ref{3.6}), we have  $|\textbf n+\textbf p, \textbf m+\textbf p| \neq0$ and $|\textbf n+\textbf p, \textbf m+\textbf p| h_{\textbf{m}+\textbf{n}}(\bm{\partial})=0$, thus $h_{\textbf{m}+\textbf{n}}(\bm{\partial})=0$. So we get $h_{\textbf{n}}(\bm{\partial})=0$ if $\Delta_{\textbf{n}}=0$ and $\textbf{n}\neq-2\textbf{p}$. Hence, $h_{\textbf{n}}(\bm{\partial})=0$ if $\textbf{n}\neq-2\textbf{p}$.\par
		 We set $\Delta_{\textbf{m}}\neq0,\textbf{n}=-2\textbf{p}\in\mathbb{Z}\times\mathbb{Z}$ in (\ref{3.6}), then $\Delta_{\textbf{m}+\textbf{n}}\neq0$, $\textbf{m}+\textbf{n}\neq-2\textbf{p}$, and
		 \begin{equation}\label{3.9}
		 	\begin{aligned}
		 		&\sum\limits_{k=0}^M\sum\limits_{l=0}^N a_{kl}(\partial_1-m_1)^k(\partial_2-m_2)^l[(m_2+p_2)\partial_1-(m_1+p_1)\partial_2-\Delta_{\textbf{m}}\alpha]\\
		 		=&\sum\limits_{k=0}^M\sum\limits_{l=0}^N a_{kl}\partial_1^k\partial_2^l[(m_2+p_2)\partial_1-(m_1+p_1)\partial_2-\Delta_{\textbf{m}}(\alpha+1)].
		 	\end{aligned}
		 \end{equation}\par
	Next we prove that all coefficients $a_{kl}$ satisfy $a_{kl}=0$. We discuss in the following situations:\\
	$(i)~~p_1=0,p_2\neq0$.\par
	Taking $\textbf{m}=(m_1,0),m_1\neq0$ in (\ref{3.9}), we get
	\begin{equation}\label{3.10}
		\begin{aligned}
			&\sum\limits_{k=0}^M\sum\limits_{l=0}^N a_{kl}(\partial_1-m_1)^k\partial_2^l(p_2\partial_1-m_1\partial_2-m_1p_2\alpha)\\
			=&\sum\limits_{k=0}^M\sum\limits_{l=0}^Na_{kl}\partial_1^k\partial_2^l[p_2\partial_1-m_1\partial_2-m_1p_2(\alpha+1)].
		\end{aligned}
	\end{equation}

We suppose $h_{-2\textbf{p}}\neq0$, and set $x,y$ as the maximum integers such that $a_{Mx}\neq0,a_{yN}\neq0$, then the coefficients of $\partial_1^M\partial_2^x$ in (\ref{3.10}) satisfy $p_2a_{M-1,x}-p_2a_{Mx}Mm_1-m_1a_{M,x-1}-m_1p_2\alpha a_{Mx}=p_2a_{M-1,x}-m_1a_{M,x-1}-m_1p_2(\alpha+1)a_{Mx}$, so we obtain $M=1$.
Hence, the equation (\ref{3.10}) can be written as
\begin{equation}\label{3.11}
	-(m_1\partial_2+m_1p_2\alpha)\sum\limits_{l=0}^x a_{1l}\partial_2^l=p_2\sum\limits_{l=0}^N a_{0l}\partial_2^l.
\end{equation}

If $x=N$, then the coefficients of $\partial_2^{N+1}$ in (\ref{3.11}) satisfy $-m_1a_{1N}=0$, thus $a_{1N}=a_{1x}=0$, this contradicts with $a_{1x}\neq0$, so $x\le N-1$, so we have that
 $a_{1N}=0$ and $a_{0N}\neq0$. Since the coefficients of $\partial_2^N$ in (\ref{3.11}) satisfy $-m_1a_{1,N-1}=p_2a_{0N}\neq0$,  we see that $a_{1,N-1}\neq0$ and $x=N-1$.

Consider the coefficients of $\partial_2^N$ in (\ref{3.11}), we see that $-m_1a_{1,N-1}=p_2a_{0N}\neq0$. If we replace $m_1$ with $2m_1$ in (\ref{3.10}), we have $-2m_1a_{1,N-1}=p_2a_{0N}\neq0$. But this contradicts $a_{1,N-1}\neq0$ and $a_{0N}\neq0$. Hence, $h_{-2\textbf{p}}=0$.\\
$(ii)~~p_1\neq0,p_2=0$.\par
It is seen by the similar argument that $h_{-2\textbf{p}}=0$.\\
$(iii)~~p_1\neq0,p_2\neq0$.\par
We can set $\textbf{m}=(m_1,0),m_1\neq0$ and $m_1+p_1\neq0$ in (\ref{3.9}), then similarly we obtain $h_{-2\textbf{p}}=0$.

In conclusion, $h_{\textbf{m}}(\bm{\partial})=0$ for all $\textbf{m}\in\mathbb{Z}\times\mathbb{Z}$ if $h_{\textbf{n}}(\bm{\partial})=0$ for all $\textbf{n}\in\mathbb{Z}\times\mathbb{Z}$ that satisfy $\Delta_{\textbf{n}}\neq0$.
	\end{proof}
\end{lemm}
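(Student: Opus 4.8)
The plan is to bootstrap from the vectors $\textbf{n}$ with $\Delta_{\textbf{n}}\neq0$, whose behaviour is already pinned down. By Lemma~\ref{a:bm} and the standing hypothesis, $h_{\textbf{n}}(\bm{\partial})=b_{\textbf{n}}=0$ whenever $\Delta_{\textbf{n}}\neq0$, so only the vectors proportional to $\textbf{p}$ (those with $\Delta_{\textbf{n}}=0$) remain to be treated, and among these only $\textbf{n}=-2\textbf{p}$ --- which is relevant solely when $-2\textbf{p}\in\mathbb{Z}\times\mathbb{Z}$ --- will need special care. The engine is the relation \eqref{3.6}: I substitute well-chosen integer pairs $(\textbf{m},\textbf{n})$ and track the scalar $|\textbf{n}+\textbf{p},\textbf{m}+\textbf{p}|$ on its right-hand side, for which the identity $|\textbf{n}+\textbf{p},\textbf{m}+\textbf{p}|=|\textbf{n},\textbf{m}|+\Delta_{\textbf{n}}-\Delta_{\textbf{m}}$ is convenient.

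First I would dispose of a generic $\textbf{n}_0$ proportional to $\textbf{p}$ with $\textbf{n}_0\neq-2\textbf{p}$. Fix once and for all some $\textbf{w}\in\mathbb{Z}\times\mathbb{Z}$ with $\Delta_{\textbf{w}}\neq0$ (one of $(1,0),(0,1)$ serves, as $\textbf{p}\neq(0,0)$) and apply \eqref{3.6} with $\textbf{m}=\textbf{w}$, $\textbf{n}=\textbf{n}_0-\textbf{w}$. Then $\Delta_{\textbf{n}}=-\Delta_{\textbf{w}}\neq0$, so $h_{\textbf{n}}(\bm{\partial})=0$; since the left-hand side of \eqref{3.6} involves only $h_{\textbf{n}}$ and $g_{\textbf{m}}$, it vanishes identically. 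Writing $\textbf{n}_0=t\textbf{p}$ with $t\in\mathbb{C}$, the displayed identity gives $|\textbf{n}+\textbf{p},\textbf{m}+\textbf{p}|=-(t+2)\Delta_{\textbf{w}}$, which is nonzero because $\textbf{n}_0\neq-2\textbf{p}$. Hence $h_{\textbf{n}_0}(\bm{\partial})=h_{\textbf{m}+\textbf{n}}(\bm{\partial})=0$. This proves $h_{\textbf{n}}(\bm{\partial})=0$ for every $\textbf{n}\neq-2\textbf{p}$.

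There remains $h_{-2\textbf{p}}(\bm{\partial})$, needed only if $-2\textbf{p}\in\mathbb{Z}\times\mathbb{Z}$. Pick any $\textbf{m}$ with $\Delta_{\textbf{m}}\neq0$; since $\Delta_{\textbf{m}-2\textbf{p}}=\Delta_{\textbf{m}}\neq0$ we have $h_{\textbf{m}-2\textbf{p}}(\bm{\partial})=b_{\textbf{m}-2\textbf{p}}=0$, so \eqref{3.6} with $\textbf{n}=-2\textbf{p}$ --- using the explicit $g_{\textbf{m}}$ of Lemma~\ref{a:bm} and $\bm{\partial}-\textbf{n}-\textbf{p}=\bm{\partial}+\textbf{p}$ --- reduces to the two-variable functional equation \eqref{3.9}, relating $h_{-2\textbf{p}}(\bm{\partial}-\textbf{m})$ and $h_{-2\textbf{p}}(\bm{\partial})$ through the linear polynomials $X_{\textbf{m}}+X_{\textbf{p}}-\Delta_{\textbf{m}}\alpha$ and $X_{\textbf{m}}+X_{\textbf{p}}-\Delta_{\textbf{m}}(\alpha+1)$. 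Writing $h_{-2\textbf{p}}(\bm{\partial})=\sum a_{kl}\partial_1^k\partial_2^l$, I would first compare coefficients of the highest-degree monomials to force $\deg_{\partial_1}h_{-2\textbf{p}}\le1$ (or $\deg_{\partial_2}h_{-2\textbf{p}}\le1$ in the mirror subcase); then, since \eqref{3.9} holds for infinitely many admissible $\textbf{m}$ --- in particular one may rescale $m_1\mapsto 2m_1$, keeping $m_1$ and $m_1+p_1$ nonzero --- a further coefficient comparison produces two incompatible identities unless every $a_{kl}=0$. To legitimize the choices $\textbf{m}=(m_1,0)$ or $\textbf{m}=(0,m_2)$ one splits into the subcases $p_1=0\neq p_2$, $p_1\neq0=p_2$, and $p_1p_2\neq0$, but each is a short leading-term computation.

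I expect this last step --- ruling out nonzero solutions of \eqref{3.9} --- to be the main obstacle: it requires careful control of leading coefficients in two variables, the rescaling trick in $\textbf{m}$, and the (minor) case split on which of $p_1,p_2$ vanishes. Everything preceding it is purely formal, resting only on the determinant identity together with Lemma~\ref{lemm3.0} and Lemma~\ref{a:bm}.
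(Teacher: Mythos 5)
Your proposal is correct and follows essentially the same route as the paper: first use \eqref{3.6} with one index having nonzero $\Delta$ to kill $h_{\textbf{n}}$ for all $\textbf{n}\neq-2\textbf{p}$ (your determinant computation $-(t+2)\Delta_{\textbf{w}}$ is exactly the paper's nonvanishing condition $|\textbf{n}+\textbf{p},\textbf{m}+\textbf{p}|\neq0$), and then reduce the remaining case $\textbf{n}=-2\textbf{p}$ to the functional equation \eqref{3.9}, which is eliminated by leading-coefficient comparison plus the $m_1\mapsto 2m_1$ rescaling, with the same case split on which of $p_1,p_2$ vanishes. The only difference is presentational; your sketch of the final coefficient analysis matches the paper's executed computation.
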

\textbf{Subcase 2}: $\Delta_{\textbf{m}}=0$:\par
In the following lemma, we show that $h_{\textbf{m}}(\bm{\partial})$ are polynomials of degree zero when $\textbf{p}=(p_1, p_2)\in(\mathbb{C}\times\mathbb{C})\backslash\left \{ (0,0) \right \}$ and $\Delta_{\textbf{m}}=0$.
\begin{lemm}
	For any $\textbf{m}\in\mathbb{Z}\times\mathbb{Z}$ such that $\Delta_{\textbf{m}}=0$, we have $h_{\textbf{m}}(\bm{\partial})=b_{\textbf{m}}\in\mathbb{C}$.
	\begin{proof}
		First, we take $\textbf{m}=-2\textbf{n}$ in (\ref{3.6}), and $\Delta_{\textbf{m}}\neq0,\Delta_{\textbf{n}}\neq0$, then we have $b_{-\textbf{n}}=\lambda_1^{-2n_1}\lambda_2^{-2n_2}b_{\textbf{n}}$.
		
			Then $\textbf{m},\textbf{n}$ satisfy $\Delta_{\textbf{m}}=0,\Delta_{\textbf{n}}\neq0$ and $h_{\textbf{n}}(\bm{\partial})=b_{\textbf{n}}\neq0$ in (\ref{3.5}), then we have $h_{\textbf{m}}(\bm{\partial})=h_{\textbf{m}}(\bm{\partial}-\textbf{n}-\textbf{p})$, this gives $h_{\textbf{m}}(X_{\textbf{n}},X_{\textbf{p}})=h_{\textbf{m}}(X_{\textbf{n}}+\Delta_{\textbf{n}},X_{\textbf{p}}-\Delta_{\textbf{n}})$. By Lemma \ref{lemm3.0}, we get $h_{\textbf{m}}(X_{\textbf{n}},X_{\textbf{p}})=z_0+z_1X_{\textbf{n}}+z_2X_{\textbf{p}}$, then we could denote $h_{\textbf{m}}(\bm{\partial})=b_{\textbf{m}}+b_{\textbf{m}}^{(1)}\partial_1+b_{\textbf{m}}^{(2)}\partial_2$. And $h_{\textbf{\textbf{m}}}(\bm{\partial})=h_{\textbf{\textbf{m}}}(\bm{\partial}-\textbf{n}-\textbf{p})$ could imply $(n_1+p_1)b_{\textbf{m}}^{(1)}+(n_2+p_2)b_{\textbf{m}}^{(2)}=0$.\par
			For the above $\textbf{n}$, we know $b_{-\textbf{n}}=\lambda_1^{-2n_1}\lambda_2^{-2n_2}b_{\textbf{n}}\neq0$ and $\Delta_{-\textbf{n}}\neq0$, then we can similarly obtain $(n_1-p_1)b_{\textbf{m}}^{(1)}+(n_2-p_2)b_{\textbf{m}}^{(2)}=0$ by replacing $\textbf{n}$ with $-\textbf{n}$. Hence, the system of linear equations
\begin{center}
	$\begin{pmatrix}
		n_1+p_1 & n_2+p_2 \\
		n_1-p_1 & n_2-p_2
	\end{pmatrix}\begin{pmatrix}
		b_{\textbf{m}}^{(1)}\\
		b_{\textbf{m}}^{(2)}
	\end{pmatrix}=\begin{pmatrix}
	0\\
	0
\end{pmatrix}$
\end{center}
holds, and by $\begin{vmatrix}
	n_1+p_1 & n_2+p_2\\
	n_1-p_1 & n_2-p_2
\end{vmatrix}=-2\Delta_{\textbf{n}}\neq0$, we have $b_{\textbf{m}}^{(1)}=b_{\textbf{m}}^{(2)}=0$. Overall,
\begin{equation}\label{3.17}
	h_{\textbf{m}}(\bm{\partial})=b_{\textbf{m}}\in\mathbb{C}~~\mbox{if}~~\Delta_{\textbf{m}}=0.
\end{equation}
This concludes the proof.
	\end{proof}
\end{lemm}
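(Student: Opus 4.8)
The plan is to exploit the relations \eqref{3.5} and \eqref{3.6} in the regime $\Delta_{\textbf{m}}=0$, using as input the already-established facts that $h_{\textbf{n}}(\bm{\partial})=b_{\textbf{n}}\in\mathbb{C}$ whenever $\Delta_{\textbf{n}}\neq0$ (Lemma \ref{a:bm}), and that we may assume some such $b_{\textbf{n}}\neq0$ (otherwise Lemma \ref{lemm3.4} already forces all $h_{\textbf{m}}$ to vanish, and the conclusion $h_{\textbf{m}}=b_{\textbf{m}}=0$ holds trivially). So fix $\textbf{m}$ with $\Delta_{\textbf{m}}=0$; the goal is to show the polynomial $h_{\textbf{m}}(\bm{\partial})$ is constant.

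First I would produce an auxiliary $\textbf{n}$ with $\Delta_{\textbf{n}}\neq0$ and $b_{\textbf{n}}\neq0$ such that \emph{also} $\Delta_{-\textbf{n}}\neq0$ and $b_{-\textbf{n}}\neq0$: the relation $b_{-\textbf{n}}=\lambda_1^{-2n_1}\lambda_2^{-2n_2}b_{\textbf{n}}$, obtained by setting $\textbf{m}=-2\textbf{n}$ in \eqref{3.6} (here $|{-\textbf{n}}+\textbf{p},\textbf{n}+\textbf{p}|$ reduces to $-2\Delta_{\textbf{n}}\neq0$ and $g$ is the known formula from Theorem \ref{th3.2}), shows $b_{-\textbf{n}}\neq0$ automatically, and $\Delta_{-\textbf{n}}=-\Delta_{\textbf{n}}\neq0$. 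Next, plug this $\textbf{n}$ into \eqref{3.5}: since $h_{\textbf{n}}(\bm{\partial})=b_{\textbf{n}}$ is a nonzero constant, \eqref{3.5} collapses to $h_{\textbf{m}}(\bm{\partial}-\textbf{n}-\textbf{p})=h_{\textbf{m}}(\bm{\partial})$. In the coordinates $X_{\textbf{n}},X_{\textbf{p}}$ (which are independent since $\Delta_{\textbf{n}}\neq0$, so $\mathbb{C}[\partial_1,\partial_2]=\mathbb{C}[X_{\textbf{n}},X_{\textbf{p}}]$) this says $h_{\textbf{m}}(X_{\textbf{n}},X_{\textbf{p}})=h_{\textbf{m}}(X_{\textbf{n}}+\Delta_{\textbf{n}},X_{\textbf{p}}-\Delta_{\textbf{n}})$, i.e. invariance under a nonzero translation with both components nonzero. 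Lemma \ref{lemm3.0}(3) then forces $h_{\textbf{m}}$ to be affine: $h_{\textbf{m}}(\bm{\partial})=b_{\textbf{m}}+b_{\textbf{m}}^{(1)}\partial_1+b_{\textbf{m}}^{(2)}\partial_2$, with the translation-invariance giving the constraint $(n_1+p_1)b_{\textbf{m}}^{(1)}+(n_2+p_2)b_{\textbf{m}}^{(2)}=0$.

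The remaining step is to kill the linear part. Running the same argument with $-\textbf{n}$ in place of $\textbf{n}$ (legitimate because $\Delta_{-\textbf{n}}\neq0$ and $b_{-\textbf{n}}\neq0$) yields a second constraint $(n_1-p_1)b_{\textbf{m}}^{(1)}+(n_2-p_2)b_{\textbf{m}}^{(2)}=0$. The $2\times2$ coefficient matrix has determinant $(n_1+p_1)(n_2-p_2)-(n_2+p_2)(n_1-p_1)=-2(n_1p_2-n_2p_1)=-2\Delta_{\textbf{n}}\neq0$, so $b_{\textbf{m}}^{(1)}=b_{\textbf{m}}^{(2)}=0$, giving $h_{\textbf{m}}(\bm{\partial})=b_{\textbf{m}}\in\mathbb{C}$ as claimed. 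The only real subtlety — the "main obstacle" — is arranging the auxiliary index $\textbf{n}$ correctly: one needs simultaneously $\textbf{n}\in\mathbb{Z}\times\mathbb{Z}$, $\Delta_{\textbf{n}}\neq0$, $b_{\textbf{n}}\neq0$, and $-2\textbf{n}$ in range to derive $b_{-\textbf{n}}\neq0$; the identity $b_{-\textbf{n}}=\lambda_1^{-2n_1}\lambda_2^{-2n_2}b_{\textbf{n}}$ is what makes this painless, since the scalars $\lambda_i$ are nonzero by hypothesis. Everything else is formal manipulation of \eqref{3.5}–\eqref{3.6} together with Lemma \ref{lemm3.0}.
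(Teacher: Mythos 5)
Your proposal is correct and follows essentially the same route as the paper: the auxiliary index $\textbf{n}$ with $b_{\textbf{n}}\neq0$, the relation $b_{-\textbf{n}}=\lambda_1^{-2n_1}\lambda_2^{-2n_2}b_{\textbf{n}}$ from \eqref{3.6}, the translation-invariance of $h_{\textbf{m}}$ in the $(X_{\textbf{n}},X_{\textbf{p}})$ coordinates fed into Lemma \ref{lemm3.0}, and the $2\times2$ system with determinant $-2\Delta_{\textbf{n}}$ killing the linear part. The only quibbles are cosmetic: your explicit reduction to Lemma \ref{lemm3.4} in the all-$b_{\textbf{n}}=0$ case is a nice touch the paper leaves implicit, and the determinant $|\textbf{n}+\textbf{p},-2\textbf{n}+\textbf{p}|$ appearing when you set $\textbf{m}=-2\textbf{n}$ is actually $3\Delta_{\textbf{n}}$ rather than $-2\Delta_{\textbf{n}}$, which changes nothing since all that matters is that it is nonzero.
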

Combining the equations (\ref{3.8}) and (\ref{3.17}), we obtain that the action $t^{\textbf{m}}\cdot 1$ is a constant action (cf. \eqref{a:act-E}):
\begin{equation}\label{3.18}
	h_{\textbf{m}}(\bm{\partial})=b_{\textbf{m}}\in\mathbb{C}~~\mbox{for all}~~\textbf{m}\in\mathbb{Z}\times\mathbb{Z}.
\end{equation}\par
The next lemma gives the relationship between $b_{\textbf{m}}$ and $b_{\textbf{0}}$ for all $\textbf{m}\in\mathbb{Z}\times\mathbb{Z}$.
\begin{lemm}\label{lemm3.6}
	For any $\textbf{m}\in\mathbb{Z}\times\mathbb{Z}$, we have that $b_{\textbf{m}}=\lambda_1^{m_1}\lambda_2^{m_2}b_{\textbf{0}}$.
	\begin{proof}
		Setting $\textbf{m}+\textbf{n}=\textbf{0},\Delta_{\textbf{m}}\neq0$ in (\ref{3.6}), then $\Delta_{\textbf{n}}\neq0,(\textbf{m}|\textbf{n}^{\bot})=0$, and $-2\lambda_1^{m_1}\lambda_2^{m_2}\Delta_{\textbf{m}}b_{\textbf{n}}=-2\Delta_{\textbf{m}}b_{\textbf{0}}$, so we obtain
		\begin{center}
			$b_{\textbf{n}}=\lambda_1^{n_1}\lambda_2^{n_2}b_{\textbf{0}}$ if $\Delta_{\textbf{n}}\neq0$.
		\end{center}
		
We take $\Delta_{\textbf{m}}\neq0,\Delta_{\textbf{n}}=0$ in (\ref{3.6}), then $\Delta_{\textbf{m}+\textbf{n}}\neq0$, and $-(\Delta_{\textbf{m}}+(\textbf{m}|\textbf{n}^{\bot}))b_{\textbf{n}}=-\lambda_1^{n_1}\lambda_2^{n_2}(\Delta_{\textbf{m}}+(\textbf{m}|\textbf{n}^{\bot}))b_{\textbf{0}}$. In addition, $\Delta_{\textbf{m}}\neq0,\Delta_{\textbf{n}}=0$ and $\Delta_{\textbf{m}}+(\textbf{m}|\textbf{n}^{\bot})=|\textbf m, \textbf n+\textbf p|\neq0$ yields that $\Delta_{\textbf{n}}=0$ and $\textbf{n}\neq-\textbf{p}$, thus
\begin{center}
	$b_{\textbf{n}}=\lambda_1^{n_1}\lambda_2^{n_2}b_{\textbf{0}}$ if $\Delta_{\textbf{n}}=0$ and $\textbf{n}\neq-\textbf{p}$.
\end{center}\par
Combining the above discussions, we can get
\begin{equation}\label{3.19}
	b_{\textbf{n}}=\lambda_1^{n_1}\lambda_2^{n_2}b_{\textbf{0}}~~\mbox{if}~~\textbf{n}\neq-\textbf{p}.
\end{equation}\par
Letting $\Delta_{\textbf{m}}\neq0,\Delta_{\textbf{n}}\neq0$ and $\textbf{m}+\textbf{n}=-\textbf{p}$ in (\ref{3.6}), then we have $\Delta_{\textbf{n}}-(\textbf{m}|\textbf{n}^{\bot})-\Delta_{\textbf{m}}\neq0$, and $\lambda_1^{m_1}\lambda_2^{m_2}b_{\textbf{n}}[\Delta_{\textbf{n}}-(\textbf{m}|\textbf{n}^{\bot})-\Delta_{\textbf{m}}]=[\Delta_{\textbf{n}}-(\textbf{m}|\textbf{n}^{\bot})-\Delta_{\textbf{m}}]b_{-\textbf{p}}$. By (\ref{3.19}), we can get
\begin{equation}\label{3.20}
	b_{-\textbf{p}}=\lambda_1^{-p_1}\lambda_2^{-p_2}b_{\textbf{0}}.
\end{equation}\par
By (\ref{3.19}) and (\ref{3.20}), we obtain
\begin{equation}\label{3.21}
	b_{\textbf{m}}=\lambda_1^{m_1}\lambda_2^{m_2}b_{\textbf{0}}~~\mbox{for all}~~\textbf{m}\in\mathbb{Z}\times\mathbb{Z}.
\end{equation}
This completes the proof.
	\end{proof}
\end{lemm}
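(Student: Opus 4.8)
The plan is to extract the whole statement from the single functional equation \eqref{3.6}, using two facts already available: by \eqref{3.18} every $h_{\textbf{m}}(\bm{\partial})$ equals the constant $b_{\textbf{m}}$, and by Lemma~\ref{a:bm}, whenever $\Delta_{\textbf{m}}\neq0$ the polynomial $g_{\textbf{m}}(\bm{\partial})$ is the affine-linear expression $\lambda_1^{m_1}\lambda_2^{m_2}(X_{\textbf{p}}-\Delta_{\textbf{m}}\alpha+X_{\textbf{m}})$. Its linear part is $\lambda_1^{m_1}\lambda_2^{m_2}(X_{\textbf{p}}+X_{\textbf{m}})=\lambda_1^{m_1}\lambda_2^{m_2}X_{\textbf{m}+\textbf{p}}$, so the shift $g_{\textbf{m}}(\bm{\partial})-g_{\textbf{m}}(\bm{\partial}-\textbf{n}-\textbf{p})$ is itself a constant, equal to $\lambda_1^{m_1}\lambda_2^{m_2}X_{\textbf{m}+\textbf{p}}(\textbf{n}+\textbf{p})=\lambda_1^{m_1}\lambda_2^{m_2}\,|\textbf{n}+\textbf{p},\textbf{m}+\textbf{p}|$. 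Feeding the constants $b_{\textbf{n}}$, $b_{\textbf{m}+\textbf{n}}$ and this shift into \eqref{3.6} reduces it to the scalar identity
\[
\lambda_1^{m_1}\lambda_2^{m_2}\,|\textbf{n}+\textbf{p},\textbf{m}+\textbf{p}|\,b_{\textbf{n}}=|\textbf{n}+\textbf{p},\textbf{m}+\textbf{p}|\,b_{\textbf{m}+\textbf{n}},
\]
valid for every $\textbf{n}\in\mathbb{Z}^2$ and every $\textbf{m}\in\mathbb{Z}^2$ with $\Delta_{\textbf{m}}\neq0$. Hence $b_{\textbf{m}+\textbf{n}}=\lambda_1^{m_1}\lambda_2^{m_2}b_{\textbf{n}}$ as soon as the coefficient $|\textbf{n}+\textbf{p},\textbf{m}+\textbf{p}|$ is nonzero.

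From here I would propagate the relation in three stages. First, put $\textbf{n}=-\textbf{m}$ with $\Delta_{\textbf{m}}\neq0$: then $|\textbf{n}+\textbf{p},\textbf{m}+\textbf{p}|=-2\Delta_{\textbf{m}}\neq0$ and $\textbf{m}+\textbf{n}=\textbf{0}$, so $b_{\textbf{0}}=\lambda_1^{m_1}\lambda_2^{m_2}b_{-\textbf{m}}$, i.e. $b_{\textbf{n}}=\lambda_1^{n_1}\lambda_2^{n_2}b_{\textbf{0}}$ whenever $\Delta_{\textbf{n}}\neq0$. Second, for $\textbf{n}$ with $\Delta_{\textbf{n}}=0$ and $\textbf{n}\neq-\textbf{p}$, the condition $\Delta_{\textbf{n}}=0$ says that $\textbf{n}+\textbf{p}$ is a nonzero multiple of $\textbf{p}$, so $|\textbf{n}+\textbf{p},\textbf{m}+\textbf{p}|$ is a nonzero multiple of $\Delta_{\textbf{m}}$ and stays nonzero for any integral $\textbf{m}$ with $\Delta_{\textbf{m}}\neq0$; since $\Delta_{\textbf{m}+\textbf{n}}=\Delta_{\textbf{m}}\neq0$, Stage~1 gives $b_{\textbf{m}+\textbf{n}}=\lambda_1^{m_1+n_1}\lambda_2^{m_2+n_2}b_{\textbf{0}}$, and cancelling the scalar yields $b_{\textbf{n}}=\lambda_1^{n_1}\lambda_2^{n_2}b_{\textbf{0}}$. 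Together these two stages establish \eqref{3.19}, the formula for all $\textbf{n}\neq-\textbf{p}$. Third, if $-\textbf{p}\in\mathbb{Z}^2$, choose $\textbf{m},\textbf{n}\in\mathbb{Z}^2$ with $\textbf{m}+\textbf{n}=-\textbf{p}$ and $\Delta_{\textbf{m}}\neq0$ (automatically $\Delta_{\textbf{n}}=-\Delta_{\textbf{m}}\neq0$); then $\textbf{n}+\textbf{p}=-\textbf{m}$, $\textbf{m}+\textbf{p}=-\textbf{n}$, so $|\textbf{n}+\textbf{p},\textbf{m}+\textbf{p}|=|\textbf{m},\textbf{n}|=\Delta_{\textbf{n}}\neq0$, and the scalar identity together with Stage~1 for $b_{\textbf{n}}$ gives $b_{-\textbf{p}}=\lambda_1^{-p_1}\lambda_2^{-p_2}b_{\textbf{0}}$, which is \eqref{3.20}. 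Assembling the stages yields \eqref{3.21}.

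The only delicate part is bookkeeping the vanishing locus of the scalar $|\textbf{n}+\textbf{p},\textbf{m}+\textbf{p}|$: for each target index $\textbf{n}$ one must exhibit a legitimate \emph{integral} $\textbf{m}$ with $\Delta_{\textbf{m}}\neq0$ and $|\textbf{n}+\textbf{p},\textbf{m}+\textbf{p}|\neq0$ — existence of some $\textbf{m}\in\mathbb{Z}^2$ with $\Delta_{\textbf{m}}\neq0$ is where $\textbf{p}\neq(0,0)$ enters — and one must treat $\textbf{n}=-\textbf{p}$ separately, because for that index the naive choice gives $\textbf{n}+\textbf{p}=\textbf{0}$ and the identity becomes vacuous, forcing the alternative splitting $\textbf{m}+\textbf{n}=-\textbf{p}$ used in Stage~3.
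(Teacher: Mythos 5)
Your proposal is correct and follows essentially the same route as the paper: it specializes equation \eqref{3.6} with $h$ constant and $g_{\textbf{m}}$ affine-linear (for $\Delta_{\textbf{m}}\neq0$) to the scalar relation $\lambda_1^{m_1}\lambda_2^{m_2}|\textbf{n}+\textbf{p},\textbf{m}+\textbf{p}|\,b_{\textbf{n}}=|\textbf{n}+\textbf{p},\textbf{m}+\textbf{p}|\,b_{\textbf{m}+\textbf{n}}$, and then runs the same three specializations ($\textbf{m}+\textbf{n}=\textbf{0}$; $\Delta_{\textbf{n}}=0$ with $\textbf{n}\neq-\textbf{p}$; $\textbf{m}+\textbf{n}=-\textbf{p}$) that the paper uses. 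Your extra care about the integrality of $-\textbf{p}$ and the existence of an integral $\textbf{m}$ with $\Delta_{\textbf{m}}\neq0$ is a small but welcome refinement of the same argument.
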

The following lemma gives the expressions of $g_{\textbf{m}}(\bm{\partial})$ when $\textbf{p}=(p_1, p_2)\in(\mathbb{C}\times\mathbb{C})\backslash\left \{ (0,0) \right \}$ and  $\Delta_{\textbf{m}}=0$.

\begin{lemm}\label{lemm3.7}
	Suppose $\Delta_{\textbf{m}}=0$, then $g_{\textbf{m}}(\bm{\partial})=\lambda_1^{m_1}\lambda_2^{m_2}(X_{\textbf{p}}+X_{\textbf{m}})$.
	\begin{proof} It's enough to consider the nontrivial action of
		$t^{\textbf{m}}$. By Lemmas \ref{lemm3.4} and \ref{lemm3.6}, we can write $b_{\textbf{m}}=\lambda_1^{m_1}\lambda_1^{m_1}b_{\textbf{0}}$
		for some constant $b_{\textbf{0}}\neq 0$.
		
	Let $g_{\textbf{m}}(\bm{\partial})=\sum\limits_{k=0}^M\sum\limits_{l=0}^Na_{kl}\partial_1^k\partial_2^l$. Taking $\textbf{n}=\textbf{0}$ and $\Delta_{\textbf{m}}=0$ in (\ref{3.6})
	implies that $g_{\textbf{m}}(\bm{\partial})=g_{\textbf{m}}(\bm{\partial}-\textbf{p})$. According to Lemma {\ref{lemm3.0}}, we have
		\begin{equation}\label{3.23}
			\left\{\begin{array}{l} g_{\textbf{m}}(\bm{\partial})\in\mathbb{C}[\partial_2]~~\mbox{if}~~\textbf{m}=(m_1,0),p_1\neq0,p_2=0,
				\\g_{\textbf{m}}(\bm{\partial})\in\mathbb{C}[\partial_1]~~\mbox{if}~~\textbf{m}=(0,m_2),p_1=0,p_2\neq0,
				\\g_{\textbf{m}}(\bm{\partial})=d_{\textbf{m}}+d_{\textbf{m}}^{(1)}\partial_1+d_{\textbf{m}}^{(2)}\partial_2~~\mbox{if}~~\Delta_{\textbf{m}}=0,p_1\neq0,p_2\neq0\mbox{, where}~~d_{\textbf{m}}^{(1)}p_1=-d_{\textbf{m}}^{(2)}p_2.
			\end{array}\right.
		\end{equation}
	\textbf{(1)} $p_1\neq0,p_2=0$.\par
	In this case, we have $g_{(m_1,0)}(\bm{\partial})=\sum\limits_{l=0}^Na_{l}\partial_2^l$, where $a_{N}\neq0$. Taking $\textbf{m}=(m_1,0),\textbf{n}=(n_1,n_2)$ and $n_2\neq0$ in (\ref{3.6}), then $\Delta_{\textbf{m}}=0,\Delta_{\textbf{n}}\neq0,\Delta_{\textbf{m}+\textbf{n}}\neq0$, and
		\begin{equation}\label{eq3.20}
			b_{\textbf{n}}[\sum\limits_{l=0}^Na_{l}\partial_2^l-\sum\limits_{l=0}^Na_{l}(\partial_2-n_2)^l]=-b_{\textbf{m}+\textbf{n}}n_2(m_1+p_1).
		\end{equation}\par
	When $m_1+p_1=0$,
	we have $\sum\limits_{l=0}^Na_l\partial_2^l=\sum\limits_{l=0}^Na_l(\partial_2-n_2)^l$, then $N=0$. We can denote $g_{(-p_1,0)}(\bm{\partial})=d_{(-p_1,0)}\in\mathbb{C}$. When $m_1+p_1\neq0$, we discuss the coefficient equations in (\ref{eq3.20}): If $N>1$, the coefficient of $\partial_2^{N-1}$ satisfies $b_{\textbf{n}}a_NNn_2=0$, it is a contradiction; if $N=0$, the constant term satisfies $-n_2(m_1+p_1)b_{\textbf{m}+\textbf{n}}=0$, it is also a contradiction. Hence, we have $N=1$, and the constant term satisfies $b_{\textbf{n}}a_1n_2=-n_2(m_1+p_1)b_{\textbf{m}+\textbf{n}}$, i.e. $a_1=-(m_1+p_1)\lambda_1^{m_1}$. So we can get $g_{(m_1,0)}(\bm{\partial})=a_0-(m_1+p_1)\lambda_1^{m_1}\partial_2$ if $m_1+p_1\neq0$, where $a_0$ depends on $(m_1,0)$, thus we denote $a_0$ as $d_{(m_1,0)}$ to be compatible with the expression of $g_{(m_1,0)}(\bm{\partial})$.\par
	Combining the above discussion, we obtain the expressions of $g_{(m_1,0)}(\bm{\partial})$ when $p_1\neq0,p_2=0$:
	\begin{equation}\label{3.24}
		g_{(m_1,0)}(\bm{\partial})=d_{(m_1,0)}-(m_1+p_1)\lambda_1^{m_1}\partial_2~~\mbox{for all}~~ m_1\in\mathbb{Z}.
	\end{equation}\par
Next we calculate $d_{(m_1,0)}$. We set $\textbf{n}=(n_1,0),\textbf{m}=(m_1,m_2),m_2\neq0$ in (\ref{3.4}), then we have $\Delta_{\textbf{n}}=0,\Delta_{\textbf{m}}\neq0,\Delta_{\textbf{m}+\textbf{n}}\neq0$, and $n_1m_2d_{(n_1,0)}=0$,
	   this yields $d_{(n_1,0)}=0$ if $n_1\neq0$.\par
	   Hence, in the case $p_1\neq0,p_2=0$, we can get
	   \begin{equation}\label{3.25}
	   	   g_{(m_1,0)}(\bm{\partial})=\delta_{m_1,0}d_{(0,0)}(p_1,0)-(m_1+p_1)\lambda_1^{m_1}\partial_2~~\mbox{for}~~p_1\neq0,p_2=0,m_1\in\mathbb{Z},
	   \end{equation}
   where $d_{(0,0)}(p_1,0)$ is a complex number related to $(p_1,0)$.\\
\textbf{(2)} $p_1=0,p_2\neq0$.\par
The case is similar to the case \textbf{(1)}, we can obtain
\begin{equation}\label{3.26}
	g_{(0,m_2)}(\bm{\partial})=\delta_{m_2,0}d_{(0,0)}(0,p_2)+(m_2+p_2)\lambda_2^{m_2}\partial_1~~\mbox{for}~~p_1=0,p_2\neq0,m_2\in\mathbb{Z}.
\end{equation}
\textbf{(3)} $p_1\neq0,p_2\neq0$.\par
We set $\Delta_{\textbf{m}}=0,\Delta_{\textbf{n}}\neq0$ in (\ref{3.6}), then $\Delta_{\textbf{m}+\textbf{n}}\neq0$, and
 $(n_1+p_1)d_{\textbf{m}}^{(1)}+(n_2+p_2)d_{\textbf{m}}^{(2)}=\lambda_1^{m_1}\lambda_2^{m_2}[\Delta_{\textbf{n}}-(\textbf{m}|\textbf{n}^{\bot})]$. Since $d_{\textbf{m}}^{(1)}p_1=-d_{\textbf{m}}^{(2)}p_2$, then $n_1d_{\textbf{m}}^{(1)}+n_2d_{\textbf{m}}^{(2)}=\lambda_1^{m_1}\lambda_2^{m_2}[\Delta_{\textbf{n}}-(\textbf{m}|\textbf{n}^{\bot})]$, and we can get
\begin{center}
	$\left\{\begin{array}{l}d_{\textbf{m}}^{(1)}=\lambda_1^{m_1}\lambda_2^{m_2}p_2[1-\dfrac{(\textbf{m}|\textbf{n}^{\bot})}{\Delta_{\textbf{n}}}]=\lambda_1^{m_1}\lambda_2^{m_2}p_2(1+K),
		\\d_{\textbf{m}}^{(2)}=-\lambda_1^{m_1}\lambda_2^{m_2}p_1[1-\dfrac{(\textbf{m}|\textbf{n}^{\bot})}{\Delta_{\textbf{n}}}]=-\lambda_1^{m_1}\lambda_2^{m_2}p_1(1+K),
	\end{array}\right.$
\end{center}
where $K=-\dfrac{(\textbf{m}|\textbf{n}^{\bot})}{\Delta_{\textbf{n}}},\textbf{m}=K(p_1, p_2)$.\par
Now, the expressions of $g_{\textbf{m}}(\bm{\partial})$ could be written as $g_{\textbf{m}}(\bm{\partial})=d_{\textbf{m}}+\lambda_1^{m_1}\lambda_2^{m_2}(K+1)(p_2\partial_1-p_1\partial_2)=d_{\textbf{m}}+\lambda_1^{m_1}\lambda_2^{m_2}(X_{\textbf{p}}+X_{\textbf{m}})$ if $\Delta_{\textbf{m}}=0$. Then taking $\Delta_{\textbf{n}}=0,\Delta_{\textbf{m}}\neq0$ in (\ref{3.4}), we have $\Delta_{\textbf{m}+\textbf{n}}\neq0$ and $(\textbf{m}|\textbf{n}^{\bot})d_{\textbf{n}}=\lambda_1^{n_1}\lambda_2^{n_2}\Delta_{\textbf{n}}(m_2\partial_1-m_1\partial_2)=0$. If $\Delta_{\textbf{n}}=0,\Delta_{\textbf{m}}\neq0$ and $(\textbf{m}|\textbf{n}^{\bot})\neq0$, we have $\Delta_{\textbf{n}}=0$ and $\textbf{n}\neq\textbf{0}$, so we can get $d_{\textbf{n}}=0$ if $\Delta_{\textbf{n}}=0,\textbf{n}\neq\textbf{0}$.

Hence, we get
\begin{equation}\label{3.27}
	g_{\textbf{m}}(\bm{\partial})=\delta_{\textbf{m},\textbf{0}}d_{\textbf{0}}(p_1,p_2)+\lambda_1^{m_1}\lambda_2^{m_2}(X_{\textbf{p}}+X_{\textbf{m}})~~\mbox{for}~~p_1\neq0,p_2\neq0,\Delta_{\textbf{m}}=0.
\end{equation}

Combining (\ref{3.25})-(\ref{3.27}), we have
\begin{equation}\label{3.28}
	g_{\textbf{m}}(\bm{\partial})=\delta_{\textbf{m},\textbf{0}}d_{\textbf{0}}(p_1,p_2)+\lambda_1^{m_1}\lambda_2^{m_2}(X_{\textbf{p}}+X_{\textbf{m}})~~\mbox{if}~~\Delta_{\textbf{m}}=0.
\end{equation}
Then $g_{\textbf{0}}(\bm{\partial})=d_{\textbf{0}}(p_1,p_2)+X_{\textbf{p}}$.\par
Next we further calculate $g_{\textbf{0}}(\bm{\partial})$. We take $\textbf{n}=(n_1,n_2)\neq\textbf{0},\textbf{m}=-\textbf{n},\Delta_{\textbf{n}}\neq0$ in (\ref{3.4}), then $g_{\textbf{n}}(\bm{\partial}+\textbf{n})g_{-\textbf{n}}(\bm{\partial})-g_{-\textbf{n}}(\bm{\partial}-\textbf{n})g_{\textbf{n}}(\bm{\partial})=2\Delta_{\textbf{n}}g_{\textbf{0}}(\bm{\partial})$, which could imply $d_{\textbf{0}}(p_1,p_2)=0$. Hence, for any $(p_1,p_2)\neq(0,0)$, we have
\begin{equation}\label{equ3.23}
	g_{\textbf{m}}(\bm{\partial})=\lambda_1^{m_1}\lambda_2^{m_2}(X_{\textbf{p}}+X_{\textbf{m}})~~\mbox{if}~~\Delta_{\textbf{m}}=0.
\end{equation}
The proof is completed.
	\end{proof}
\end{lemm}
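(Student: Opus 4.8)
The plan is to push the three structure relations \eqref{3.5}, \eqref{3.6}, \eqref{3.4} as far as they will go now that the abelian part is known. By \eqref{3.18} and \eqref{3.21} each $t^{\textbf{m}}$ acts by the scalar $b_{\textbf{m}}=\lambda_1^{m_1}\lambda_2^{m_2}b_{\textbf{0}}$, and by Lemma \ref{a:bm} the $g_{\textbf{n}}$ with $\Delta_{\textbf{n}}\ne0$ are already the asserted $\lambda_1^{n_1}\lambda_2^{n_2}(X_{\textbf{p}}-\Delta_{\textbf{n}}\alpha+X_{\textbf{n}})$. First I would reduce to $b_{\textbf{0}}\ne0$: otherwise, by Lemma \ref{lemm3.4} all $t^{\textbf{m}}$ act trivially, $V$ is then a free rank-one $\widetilde{\mathcal B}(p_1,p_2)$-module, and the claim follows from Theorem \ref{th3.2}. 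Now fix $\textbf{m}$ with $\Delta_{\textbf{m}}=0$, so $\textbf{m}$ is a scalar multiple of $\textbf{p}$. Putting $\textbf{n}=\textbf{0}$ in \eqref{3.6}, and using that $h_{\textbf{0}},h_{\textbf{m}}$ are constants together with $|\textbf{p},\textbf{m}+\textbf{p}|=-\Delta_{\textbf{m}}=0$, gives $b_{\textbf{0}}\bigl(g_{\textbf{m}}(\bm{\partial})-g_{\textbf{m}}(\bm{\partial}-\textbf{p})\bigr)=0$, hence $g_{\textbf{m}}(\bm{\partial})=g_{\textbf{m}}(\bm{\partial}-\textbf{p})$. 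By Lemma \ref{lemm3.0} this forces $\deg g_{\textbf{m}}\le1$: $g_{\textbf{m}}\in\mathbb{C}[\partial_2]$ if $p_1=0\ne p_2$, $g_{\textbf{m}}\in\mathbb{C}[\partial_1]$ if $p_1\ne0=p_2$, and $g_{\textbf{m}}=d_{\textbf{m}}+d_{\textbf{m}}^{(1)}\partial_1+d_{\textbf{m}}^{(2)}\partial_2$ with $p_1d_{\textbf{m}}^{(1)}=-p_2d_{\textbf{m}}^{(2)}$ when $p_1p_2\ne0$.

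Next I would pin down the linear part. Choose $\textbf{n}\in\mathbb{Z}^2$ with $\Delta_{\textbf{n}}\ne0$ (possible since $\textbf{p}\ne(0,0)$); then $\Delta_{\textbf{m}+\textbf{n}}=\Delta_{\textbf{n}}\ne0$ and $b_{\textbf{n}}\ne0$. Substituting $\textbf{m},\textbf{n}$ into \eqref{3.6}, using $g_{\textbf{m}}(\bm{\partial}-\textbf{n}-\textbf{p})=g_{\textbf{m}}(\bm{\partial}-\textbf{n})$ and dividing by $b_{\textbf{n}}$, yields $g_{\textbf{m}}(\bm{\partial})-g_{\textbf{m}}(\bm{\partial}-\textbf{n})=\lambda_1^{m_1}\lambda_2^{m_2}\,|\textbf{n}+\textbf{p},\textbf{m}+\textbf{p}|$. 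The right-hand side is a constant while the left-hand side is the first difference of a polynomial of degree $\le1$, so comparing the two linear coefficients against $p_1d_{\textbf{m}}^{(1)}=-p_2d_{\textbf{m}}^{(2)}$ (a $2\times2$ system of determinant $\Delta_{\textbf{n}}\ne0$) and using $|\textbf{n}+\textbf{p},\textbf{m}+\textbf{p}|=\Delta_{\textbf{n}}-(\textbf{m}|\textbf{n}^{\bot})$ with $\textbf{m}\parallel\textbf{p}$ gives exactly $d_{\textbf{m}}^{(1)}\partial_1+d_{\textbf{m}}^{(2)}\partial_2=\lambda_1^{m_1}\lambda_2^{m_2}(X_{\textbf{p}}+X_{\textbf{m}})$; the degenerate sub-cases $p_i=0$ are handled identically with $\textbf{n}$ chosen to have the relevant coordinate nonzero.

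It then remains to kill the constant $d_{\textbf{m}}$. For $\textbf{m}\ne\textbf{0}$ with $\Delta_{\textbf{m}}=0$ I would feed $\textbf{m}$ and a suitable $\textbf{n}$ with $\Delta_{\textbf{n}}\ne0$ into \eqref{3.4}: both $g_{\textbf{n}}$ and $g_{\textbf{m}+\textbf{n}}$ are already known, and comparing constant terms leaves $(\textbf{m}|\textbf{n}^{\bot})d_{\textbf{m}}=0$; since $\textbf{m}\ne\textbf{0}$ one can pick $\textbf{n}$ with $(\textbf{m}|\textbf{n}^{\bot})\ne0$ and $\Delta_{\textbf{n}}\ne0$ simultaneously, so $d_{\textbf{m}}=0$. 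The case $\textbf{m}=\textbf{0}$ is the delicate one, since $\textbf{0}^{\bot}=\textbf{0}$ makes that argument vacuous: here I would apply \eqref{3.4} to the antipodal pair $(\textbf{n},-\textbf{n})$ with $\Delta_{\textbf{n}}\ne0$, obtaining $2\Delta_{\textbf{n}}\,g_{\textbf{0}}(\bm{\partial})=g_{\textbf{n}}(\bm{\partial}+\textbf{n})g_{-\textbf{n}}(\bm{\partial})-g_{-\textbf{n}}(\bm{\partial}-\textbf{n})g_{\textbf{n}}(\bm{\partial})$, then substitute the explicit $g_{\pm\textbf{n}}$ from Lemma \ref{a:bm}; using $X_{\textbf{p}}(\bm{\partial}\pm\textbf{n})=X_{\textbf{p}}\pm\Delta_{\textbf{n}}$ and $X_{\textbf{n}}(\bm{\partial}\pm\textbf{n})=X_{\textbf{n}}$, the quadratic and $\alpha$-dependent terms cancel and the right-hand side collapses to $2\Delta_{\textbf{n}}X_{\textbf{p}}$, forcing $g_{\textbf{0}}(\bm{\partial})=X_{\textbf{p}}$, i.e. $d_{\textbf{0}}=0$. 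Assembling the cases gives $g_{\textbf{m}}(\bm{\partial})=\lambda_1^{m_1}\lambda_2^{m_2}(X_{\textbf{p}}+X_{\textbf{m}})$ whenever $\Delta_{\textbf{m}}=0$. I expect the main obstacle to be precisely this last step together with the case split: because $E(\textbf{0})$ is essentially central, all three relations degenerate at $\textbf{m}=\textbf{0}$, so one must resort to the antipodal-pair identity and rely on the (fortunate) cancellation it produces, and one has to verify separately, in each of the sub-cases $p_1=0$, $p_2=0$, $p_1p_2\ne0$, that an auxiliary $\textbf{n}\in\mathbb{Z}^2$ with all the required nonvanishing determinants actually exists.
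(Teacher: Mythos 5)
Your proposal follows the paper's own route almost step for step: the reduction to $b_{\textbf{0}}\neq0$, the $\textbf{p}$-periodicity $g_{\textbf{m}}(\bm{\partial})=g_{\textbf{m}}(\bm{\partial}-\textbf{p})$ obtained from \eqref{3.6} with $\textbf{n}=\textbf{0}$, the determination of the linear part from \eqref{3.6} with $\Delta_{\textbf{n}}\neq0$ via the $2\times2$ system of determinant $\Delta_{\textbf{n}}$, the vanishing of $d_{\textbf{m}}$ for $\textbf{m}\neq\textbf{0}$ from \eqref{3.4}, and the antipodal-pair identity $g_{\textbf{n}}(\bm{\partial}+\textbf{n})g_{-\textbf{n}}(\bm{\partial})-g_{-\textbf{n}}(\bm{\partial}-\textbf{n})g_{\textbf{n}}(\bm{\partial})=2\Delta_{\textbf{n}}g_{\textbf{0}}(\bm{\partial})$ to kill $d_{\textbf{0}}$ are exactly the paper's steps, and your observation that the right-hand side collapses to $2\Delta_{\textbf{n}}X_{\textbf{p}}$ checks out. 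The one inaccuracy is the claim that Lemma \ref{lemm3.0} already forces $\deg g_{\textbf{m}}\le1$: when $p_1p_2=0$ that lemma only says $g_{\textbf{m}}$ is a polynomial in a single variable, of a priori unbounded degree (and you have the two degenerate cases swapped: $p_1=0\neq p_2$ gives $g_{\textbf{m}}\in\mathbb{C}[\partial_1]$, not $\mathbb{C}[\partial_2]$). In those cases the degree bound must instead be extracted from the difference equation $g_{\textbf{m}}(\bm{\partial})-g_{\textbf{m}}(\bm{\partial}-\textbf{n})=\lambda_1^{m_1}\lambda_2^{m_2}\,|\textbf{n}+\textbf{p},\textbf{m}+\textbf{p}|$ itself: a first difference with the relevant shift coordinate nonzero can only be constant if the polynomial has degree at most one, which is precisely the coefficient analysis the paper carries out in its cases \textbf{(1)} and \textbf{(2)} via \eqref{eq3.20}. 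Since your plan already writes down that difference equation, this is a repairable imprecision rather than a genuine gap, and the remainder of the argument is sound.
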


From Theorem \ref{th3.2} to Lemma \ref{lemm3.7}, we obtain the following theorem:
\begin{theo}
	Suppose that $V$ is an $L(p_1, p_2)$-module or an $\widetilde{L}(p_1, p_2)$-module with $(p_1, p_2)\in \mathbb{C}^{\times}\times\mathbb{C}^{\times}$ when and is free of rank one when restricted to $U(\mathfrak{h})$. Then $V\cong\Omega(\bm{\lambda},\alpha,b_{\textbf{0}})$ for some $\bm{\lambda}\in\mathbb{C}^{\times}\times\mathbb{C}^{\times},$ $\alpha\in\mathbb{C},b_{\textbf{0}}\in\mathbb{C}$ with the action given by
	\begin{equation}\label{3.29}
		E(\textbf{m})f(\partial_1,\partial_2)=\lambda_1^{m_1}\lambda_2^{m_2}(X_{\textbf{p}}-\Delta_{\textbf{m}}\alpha+X_{\textbf{m}})f(\partial_1-m_1,\partial_2-m_2),
	\end{equation}
	\begin{equation}\label{3.30}
		t^\textbf{m}f(\partial_1,\partial_2)=\lambda_1^{m_1}\lambda_2^{m_2}b_{\textbf{0}}f(\partial_1-m_1-p_1,\partial_2-m_2-p_2),
	\end{equation}
and $\partial_1,\partial_2$ act as
 left multiplication. In particular, when $b_{\textbf{0}}=0$, the above modules are $\mathcal{B}(p_1, p_2)$-modules or $\widetilde{\mathcal B}(p_1, p_2)$-modules.
\end{theo}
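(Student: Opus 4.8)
The plan is to assemble the descriptions of the action polynomials $g_{\textbf{m}}(\bm{\partial})$ and $h_{\textbf{m}}(\bm{\partial})$ (cf.\ \eqref{a:act-E}) produced by the preceding lemmas into one pair of closed formulas, and then to recognize the tautological linear bijection $V\to\Omega(\bm{\lambda},\alpha,b_{\textbf{0}})$ as a module isomorphism. For the $g_{\textbf{m}}$: Lemma \ref{a:bm} together with Theorem \ref{th3.2} gives $g_{\textbf{m}}(\bm{\partial})=\lambda_1^{m_1}\lambda_2^{m_2}(X_{\textbf{p}}-\Delta_{\textbf{m}}\alpha+X_{\textbf{m}})$ for every $\textbf{m}$ with $\Delta_{\textbf{m}}\neq0$, where $\bm{\lambda}\in\mathbb{C}^{\times}\times\mathbb{C}^{\times}$ and $\alpha\in\mathbb{C}$ are fixed (one must note these are the \emph{same} for all such $\textbf{m}$, which is part of Theorem \ref{th3.2}), while Lemma \ref{lemm3.7} gives $g_{\textbf{m}}(\bm{\partial})=\lambda_1^{m_1}\lambda_2^{m_2}(X_{\textbf{p}}+X_{\textbf{m}})$ for every $\textbf{m}$ with $\Delta_{\textbf{m}}=0$. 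Since $\Delta_{\textbf{m}}\alpha=0$ whenever $\Delta_{\textbf{m}}=0$, the two cases merge into the single formula $g_{\textbf{m}}(\bm{\partial})=\lambda_1^{m_1}\lambda_2^{m_2}(X_{\textbf{p}}-\Delta_{\textbf{m}}\alpha+X_{\textbf{m}})$ valid for all $\textbf{m}\in\mathbb{Z}\times\mathbb{Z}$. For the $h_{\textbf{m}}$: equation \eqref{3.18} and Lemma \ref{lemm3.6} give $h_{\textbf{m}}(\bm{\partial})=b_{\textbf{m}}=\lambda_1^{m_1}\lambda_2^{m_2}b_{\textbf{0}}$ for all $\textbf{m}$, with $b_{\textbf{0}}\in\mathbb{C}$ (the value $b_{\textbf{0}}=0$ being permitted, in which case Lemma \ref{lemm3.4} applies).

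Next I would use Lemma \ref{lemm3.1} to convert these into the action on a general element: for $f(\bm{\partial})\cdot1\in V=\mathbb{C}[\partial_1,\partial_2]\cdot1$,
\[
E(\textbf{m})\bigl(f(\bm{\partial})\cdot1\bigr)=g_{\textbf{m}}(\bm{\partial})\,f(\bm{\partial}-\textbf{m})\cdot1,\qquad t^{\textbf{m}}\bigl(f(\bm{\partial})\cdot1\bigr)=h_{\textbf{m}}(\bm{\partial})\,f(\bm{\partial}-\textbf{m}-\textbf{p})\cdot1,
\]
with $\partial_1,\partial_2$ acting by left multiplication. Substituting the formulas just obtained turns the right-hand sides into exactly \eqref{3.29}--\eqref{3.30} under the correspondence $f(\bm{\partial})\cdot1\leftrightarrow f(\partial_1,\partial_2)$. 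This correspondence is plainly a $\mathbb{C}$-linear bijection $V\to\Omega(\bm{\lambda},\alpha,b_{\textbf{0}})$ and, by construction, intertwines the actions of $\partial_i$, $E(\textbf{m})$ and $t^{\textbf{m}}$; hence it is an isomorphism of $\widetilde{L}(p_1,p_2)$-modules, a fortiori of $L(p_1,p_2)$-modules. Finally, if $b_{\textbf{0}}=0$ then \eqref{3.30} shows each $t^{\textbf{m}}$ acts as $0$, so the module structure factors through the quotient of $\widetilde{L}(p_1,p_2)$ (resp.\ $L(p_1,p_2)$) by the abelian ideal spanned by the $t^{\textbf{m}}$, which is $\widetilde{\mathcal{B}}(p_1,p_2)$ (resp.\ $\mathcal{B}(p_1,p_2)$); this is the last assertion.

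Essentially all the difficulty has been front-loaded into the earlier lemmas, most notably Lemma \ref{lemm3.4} (no non-constant $h_{\textbf{m}}$ survives unless all $t^{\textbf{m}}$ act trivially) and Lemma \ref{lemm3.7} (the shape of $g_{\textbf{m}}$ in the degenerate directions $\Delta_{\textbf{m}}=0$), both of which required splitting according to whether $p_1$ or $p_2$ vanishes. The one point in the proof of the theorem itself that warrants real attention is verifying that the two parameters $\alpha$ and $b_{\textbf{0}}$ are genuinely independent --- i.e.\ that plugging the formulas of the first paragraph into \eqref{3.5}--\eqref{3.4} yields identities for \emph{every} $\alpha\in\mathbb{C}$ and $b_{\textbf{0}}\in\mathbb{C}$, so that $\Omega(\bm{\lambda},\alpha,b_{\textbf{0}})$ is a well-defined module for all parameter values and no spurious relation between $\alpha$ and $b_{\textbf{0}}$ is forced.
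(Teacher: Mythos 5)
Your proposal is correct and follows essentially the same route as the paper, which offers no separate proof of this theorem but simply states that it follows by assembling Theorem \ref{th3.2} and Lemmas \ref{a:bm}--\ref{lemm3.7}; your write-up is exactly that assembly (merging the $\Delta_{\textbf{m}}\neq0$ and $\Delta_{\textbf{m}}=0$ cases for $g_{\textbf{m}}$, invoking \eqref{3.18} and Lemma \ref{lemm3.6} for $h_{\textbf{m}}$, and transporting the structure via Lemma \ref{lemm3.1}). The well-definedness of $\Omega(\bm{\lambda},\alpha,b_{\textbf{0}})$ for arbitrary $\alpha,b_{\textbf{0}}$ that you flag at the end is already asserted in Section 2 where the module is introduced, so it is not an additional burden on this theorem's proof.
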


\subsection{The case $\textbf{p}=(0,0)$}
\
\newline
\indent
We recall the following result from \cite{Z}:
\begin{theo}\label{th3.9} For $\bm{\lambda}\in\mathbb{C}^{\times}\times\mathbb{C}^{\times}$, $\bm{\beta}\in\mathbb{C}\times\mathbb{C}$, let
	$\Omega(\bm{\lambda},\bm{\beta})=\mathbb{C}[\partial_1,\partial_2]$ be the $\mathcal L$-module given by
	\begin{equation}\label{3.38}
		E(\textbf{m})f(\partial_1,\partial_2)=\lambda_1^{m_1}\lambda_2^{m_2}(m_2(\partial_1+\beta_1)-m_1(\partial_2+\beta_2))f(\partial_1-m_1,\partial_2-m_2),
	\end{equation}
where $f(\partial_1,\partial_2)\in\mathbb{C}[\partial_1,\partial_2]$. Then any $\mathcal{L}$-module with rank one free $U(\mathfrak{h})$-action is isomorphic to some $\Omega(\bm{\lambda},\bm{\beta})$ for $\bm{\lambda}\in\mathbb{C}^{\times}\times\mathbb{C}^{\times}$ and $\bm{\beta}\in\mathbb{C}\times\mathbb{C}$.
\end{theo}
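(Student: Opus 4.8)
We indicate the strategy; the full details are in \cite{Z}, and the argument runs parallel to the case $\textbf{p}\neq(0,0)$ above, except that only the bracket among the $E$'s is available (the algebra $\mathcal{L}=\widetilde{\mathcal B}(0,0)$ has no $t^{\textbf{m}}$). Fix a free generator $1$ of $V$, so $V=\mathbb{C}[\partial_1,\partial_2]\cdot 1$, and write $E(\textbf{m})\cdot 1=g_{\textbf{m}}(\bm{\partial})\cdot 1$ with $g_{\textbf{m}}\in\mathbb{C}[\partial_1,\partial_2]$, as in \eqref{a:act-E}. Since the relations $[\partial_i,E(\textbf{m})]=m_iE(\textbf{m})$ hold, the $\textbf{p}=\textbf{0}$ case of Lemma \ref{lemm3.1} gives $E(\textbf{m})f(\bm{\partial})\cdot 1=f(\bm{\partial}-\textbf{m})g_{\textbf{m}}(\bm{\partial})\cdot 1$ for all $f$, so the module structure on $V$ is encoded in the family $\{g_{\textbf{m}}\}_{\textbf{m}\in\mathbb{Z}^{2}}$. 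Evaluating $[E(\textbf{m}),E(\textbf{n})]=-|\textbf{m},\textbf{n}|E(\textbf{m}+\textbf{n})$ (the $\textbf{p}=\textbf{0}$, $E(\textbf{0})=0$ case of \eqref{e:L}) on $1$ yields, exactly as in \eqref{3.4},
\begin{equation*}
g_{\textbf{n}}(\bm{\partial}-\textbf{m})g_{\textbf{m}}(\bm{\partial})-g_{\textbf{m}}(\bm{\partial}-\textbf{n})g_{\textbf{n}}(\bm{\partial})=|\textbf{n},\textbf{m}|\,g_{\textbf{m}+\textbf{n}}(\bm{\partial}),\qquad \textbf{m},\textbf{n}\in\mathbb{Z}^{2},
\end{equation*}
and the problem reduces to solving this functional equation.

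The core step is to prove that every $g_{\textbf{m}}$ is affine-linear in $\bm{\partial}$. In the displayed identity the homogeneous components of top degree $\deg g_{\textbf{m}}+\deg g_{\textbf{n}}$ cancel identically; the plan is to compare the next homogeneous layer for pairs in general position ($|\textbf{m},\textbf{n}|\neq 0$) and iterate along a $\mathbb{Z}$-basis of $\mathbb{Z}^{2}$ to force a uniform bound $\deg g_{\textbf{m}}\le 1$. Alternatively, once an $\textbf{n}$ with $g_{\textbf{n}}$ a nonzero constant is produced, specializing the identity to a vector parallel to $\textbf{n}$ makes $g_{\textbf{m}}$ invariant under a nonzero translation, and Lemma \ref{lemm3.0} applies. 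I expect this degree reduction, together with the bookkeeping that separates the genuinely nontrivial modules from the trivial $E$-action on $\mathbb{C}[\partial_1,\partial_2]$, to be the main obstacle; the remainder is linear algebra.

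Finally, writing $g_{\textbf{m}}(\bm{\partial})=c_{\textbf{m}}+v^{(1)}_{\textbf{m}}\partial_1+v^{(2)}_{\textbf{m}}\partial_2$ and substituting back into the functional equation, one matches coefficients: the quadratic terms vanish automatically; the linear part is a multiplicative recursion for $(v^{(1)}_{\textbf{m}},v^{(2)}_{\textbf{m}})$ which, initialized at $\textbf{m}=(\pm1,0),(0,\pm1)$ and propagated via the brackets with $E(\pm1,0)$ and $E(0,\pm1)$, forces $(v^{(1)}_{\textbf{m}},v^{(2)}_{\textbf{m}})=\lambda_1^{m_1}\lambda_2^{m_2}(m_2,-m_1)$ for constants $\lambda_1,\lambda_2$ (nonzero unless the whole $E$-action vanishes); the constant part then forces $c_{\textbf{m}}=\lambda_1^{m_1}\lambda_2^{m_2}(m_2\beta_1-m_1\beta_2)$ for some $\bm{\beta}=(\beta_1,\beta_2)\in\mathbb{C}\times\mathbb{C}$. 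Thus $g_{\textbf{m}}(\bm{\partial})=\lambda_1^{m_1}\lambda_2^{m_2}\bigl(m_2(\partial_1+\beta_1)-m_1(\partial_2+\beta_2)\bigr)$, and $f(\bm{\partial})\cdot 1\mapsto f(\bm{\partial})$ is then an $\mathcal{L}$-module isomorphism $V\xrightarrow{\ \sim\ }\Omega(\bm{\lambda},\bm{\beta})$ with the action \eqref{3.38}; that \eqref{3.38} does define such a module for every $\bm{\lambda}\in\mathbb{C}^{\times}\times\mathbb{C}^{\times}$ and $\bm{\beta}\in\mathbb{C}\times\mathbb{C}$ is a direct verification of the defining relations.
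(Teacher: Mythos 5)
The paper does not actually prove Theorem \ref{th3.9}: it is recalled verbatim from \cite{Z}, so there is no in-paper argument to compare yours against line by line. Your outline is the standard one and is exactly parallel to what the paper does for $\textbf{p}\neq(0,0)$ in Section 3: encode the module in the family $\{g_{\textbf{m}}\}$ via Lemma \ref{lemm3.1}, derive the functional equation \eqref{4.4}, bound the degree of each $g_{\textbf{m}}$ by one, and solve the resulting recursion; the coefficient-matching at the end does yield $g_{\textbf{m}}(\bm{\partial})=\lambda_1^{m_1}\lambda_2^{m_2}\bigl(m_2(\partial_1+\beta_1)-m_1(\partial_2+\beta_2)\bigr)$, in agreement with \eqref{3.38}. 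So the approach is the right one and, modulo the citation of \cite{Z}, is no less complete than the paper's own treatment of this statement.

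As a self-contained proof, however, there is a genuine gap precisely where you flag it: the affine-linearity of every $g_{\textbf{m}}$ is only announced as a ``plan'' (compare next-to-top homogeneous layers and iterate along a basis of $\mathbb{Z}^2$), and that degree bound is the entire analytic content of the theorem. Worse, your proposed shortcut --- produce an $\textbf{n}$ with $g_{\textbf{n}}$ a nonzero constant and then invoke translation invariance plus Lemma \ref{lemm3.0} --- cannot be realized for $\mathcal{L}$: in every nontrivial solution each $g_{\textbf{n}}$ with $\textbf{n}\neq\textbf{0}$ has degree exactly one, so no such $\textbf{n}$ exists. The translation-invariance device via Lemma \ref{lemm3.0} works in the paper's Section 3 only because there the polynomials $h_{\textbf{m}}$ attached to the abelian elements $t^{\textbf{m}}$ turn out to be constants, and $\mathcal{L}$ has no such elements. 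You should also say explicitly how the identically-zero solution of \eqref{4.4} (the trivial $E$-action, which is free of rank one over $U(\mathfrak{h})$ but is not any $\Omega(\bm{\lambda},\bm{\beta})$ with $\bm{\lambda}\in\mathbb{C}^{\times}\times\mathbb{C}^{\times}$) is excluded or absorbed into the statement, rather than relegating it to ``bookkeeping.''
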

In this case, (\ref{3.5})-(\ref{3.4}) yield
\begin{equation}\label{4.5}
	h_{\textbf{n}}(\bm{\partial}-\textbf{m})h_{\textbf{m}}(\bm{\partial})-h_{\textbf{m}}(\bm{\partial}-\textbf{n})h_{\textbf{n}}(\bm{\partial})=0,
\end{equation}
\begin{equation}\label{4.6}
	h_{\textbf{n}}(\bm{\partial}-\textbf{m})g_{\textbf{m}}(\bm{\partial})-g_{\textbf{m}}(\bm{\partial}-\textbf{n})h_{\textbf{n}}(\bm{\partial})=|\textbf n, \textbf m|h_{\textbf{m}+\textbf{n}}(\bm{\partial}),
\end{equation}
\begin{equation}\label{4.4}
	g_{\textbf{n}}(\bm{\partial}-\textbf{m})g_{\textbf{m}}(\bm{\partial})-g_{\textbf{m}}(\bm{\partial}-\textbf{n})g_{\textbf{n}}(\bm{\partial})=|\textbf n, \textbf m|g_{\textbf{m}+\textbf{n}}(\bm{\partial}).
\end{equation}

	Now we calculate the expressions of $h_{\textbf{n}}(\bm{\partial})$ and $g_{\textbf{n}}(\bm{\partial})$:
	\begin{lemm} We have that $g_{\textbf{m}}(\bm{\partial})=\lambda_1^{m_1}\lambda_2^{m_2}(m_2(\partial_1+\beta_1)-m_1(\partial_2+\beta_2))$ and $h_{\textbf{m}}(\bm{\partial})=b_{\textbf{m}}\in\mathbb{C}$.
	\begin{proof}
		By Theorem \ref{th3.9}, $g_{\textbf{m}}(\bm{\partial})=\lambda_1^{m_1}\lambda_2^{m_2}(m_2(\partial_1+\beta_1)-m_1(\partial_2+\beta_2))$. Write $h_{\textbf{m}}(\bm{\partial})=\sum\limits_{k=0}^M\sum\limits_{l=0}^N a_{kl}\partial_1^k\partial_2^l$, and we claim that $M=N=0$.
	
		As before setting $\textbf{m}=\textbf{n}\neq\textbf{0}$ in (\ref{4.6}) leads to
		$\sum\limits_{k=0}^M\sum\limits_{l=0}^N a_{kl}(\partial_1-m_1)^k(\partial_2-m_2)^l=\sum\limits_{k=0}^M\sum\limits_{l=0}^N a_{kl}\partial_1^k\partial_2^l$. According to Lemma {\ref{lemm3.0}}, we have
	\begin{equation}\label{3.34}
		\left\{\begin{array}{l}h_{\textbf{m}}(\bm{\partial})\in\mathbb{C}[\partial_2]~~\mbox{if}~~m_1\neq0,m_2=0,
			\\h_{\textbf{m}}(\bm{\partial})\in\mathbb{C}[\partial_1]~~\mbox{if}~~m_1=0,m_2\neq0,
			\\h_{\textbf{m}}(\bm{\partial})=x_{\textbf{m}}+x_{\textbf{m}}^{(1)}\partial_1+x_{\textbf{m}}^{(2)}\partial_2~~\mbox{if}~~m_1\neq0,m_2\neq0\mbox{, where}~~m_1x_{\textbf{m}}^{(1)}=-m_2x_{\textbf{m}}^{(2)}.
		\end{array}\right.
	\end{equation}\par
Taking $\textbf{m}=(m_1,0),\textbf{n}=(0,n_2),m_1,n_2\in\mathbb{Z}^*$ in (\ref{4.5}), then we get $\sum\limits_{k=0}^M\sum\limits_{l=0}^N a_{k}a_l'(\partial_1-m_1)^k \partial_2^l=\sum\limits_{k=0}^M\sum\limits_{l=0}^N a_{k}a_l'\partial_1^k(\partial_2-n_2)^l$, it is easy to see $M=0$ and $N=0$.
i.e $h_{\textbf{m}}(\bm{\partial})=b_{\textbf{m}}\in\mathbb C$ for the case of either $m_1=0, m_2\neq 0$ or $m_2=0, m_1\neq 0$.

Let $\textbf{m}=(m_1,m_2),\textbf{n}=(n_1,0),m_1,m_2,n_1\neq0$ in (\ref{4.5}), we have $x_{\textbf{m}}+x_{\textbf{m}}^{(1)}\partial_1+x_{\textbf{m}}^{(2)}\partial_2=x_{\textbf{m}}+x_{\textbf{m}}^{(1)}(\partial_1-n_1)+x_{\textbf{m}}^{(2)}\partial_2$, then we get $x_{\textbf{m}}^{(1)}=0$. Similarly, we get $x_{\textbf{m}}^{(2)}=0$. So we could denote
\begin{center}
	$h_{\textbf{m}}(\bm{\partial})=b_{\textbf{m}}\in\mathbb{C}$ if $\textbf{m}=(m_1,m_2),m_1\neq0,m_2\neq0.$
\end{center}\par
We set $\textbf{n}=\textbf{0},\textbf{m}\neq\textbf{0}$ in (\ref{4.6}), then $h_{\textbf{0}}(\bm{\partial})=h_{\textbf{0}}(\bm{\partial}-\textbf{m})$. Then we can obtain $h_{\textbf{0}}(\bm{\partial})=b_{\textbf{0}}\in\mathbb{C}$ if we take $\textbf{m}=(m_1,0),m_1\neq0$ and $\textbf{m}=(0,m_2),m_2\neq0$, respectively.\par
Overall, we can see that $h_{\textbf{m}}(\bm{\partial})=b_{\textbf{m}}\in\mathbb{C}$ for all $\textbf{m}\in\mathbb{Z}\times\mathbb{Z}$.
\end{proof}
\end{lemm}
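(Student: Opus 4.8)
The plan is to pin down $g_{\textbf m}$ and $h_{\textbf m}$ in turn: $g_{\textbf m}$ falls out of the known classification of rank-one $U(\mathfrak h)$-free $\mathcal L$-modules, after which $h_{\textbf m}$ is extracted from the three compatibility relations \eqref{4.5}--\eqref{4.4}. Since the $E(\textbf m)$ together with $\partial_1,\partial_2$ span the subalgebra $\widetilde{\mathcal B}(0,0)\cong\mathcal L$ of $\widetilde L(0,0)$, the restriction of $V$ to $\mathcal L$ is still free of rank one over $U(\mathfrak h)=\mathbb C[\partial_1,\partial_2]$ with cyclic generator $1$, so Theorem \ref{th3.9} yields an $\mathcal L$-module isomorphism $V\cong\Omega(\bm\lambda,\bm\beta)$ for some $\bm\lambda\in\mathbb C^\times\times\mathbb C^\times$, $\bm\beta\in\mathbb C\times\mathbb C$. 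As such an isomorphism is in particular a $U(\mathfrak h)$-isomorphism, it must send $1\in V$ to a nonzero scalar multiple of $1\in\Omega(\bm\lambda,\bm\beta)$; rescaling, one reads off $g_{\textbf m}(\bm\partial)=\lambda_1^{m_1}\lambda_2^{m_2}\big(m_2(\partial_1+\beta_1)-m_1(\partial_2+\beta_2)\big)$. In particular $g_{\textbf m}$ is a nonzero degree-one polynomial for $\textbf m\neq\textbf 0$, and a one-line check shows $g_{\textbf m}(\bm\partial-\textbf m)=g_{\textbf m}(\bm\partial)$.

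For $h_{\textbf m}$, write $h_{\textbf m}(\bm\partial)=\sum_{k,l}a_{kl}\partial_1^k\partial_2^l$. Setting $\textbf n=\textbf m\neq\textbf 0$ in \eqref{4.6}, the right-hand side vanishes since $|\textbf m,\textbf m|=0$, and using the translation invariance of $g_{\textbf m}$ just noted the relation becomes $\big(h_{\textbf m}(\bm\partial-\textbf m)-h_{\textbf m}(\bm\partial)\big)g_{\textbf m}(\bm\partial)=0$; as $\mathbb C[\partial_1,\partial_2]$ is an integral domain and $g_{\textbf m}\neq0$, this forces $h_{\textbf m}(\bm\partial-\textbf m)=h_{\textbf m}(\bm\partial)$. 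Lemma \ref{lemm3.0} then constrains the shape of $h_{\textbf m}$: a polynomial in $\partial_1$ alone if $m_1=0\neq m_2$, in $\partial_2$ alone if $m_2=0\neq m_1$, and of the form $x_{\textbf m}+x_{\textbf m}^{(1)}\partial_1+x_{\textbf m}^{(2)}\partial_2$ with $m_1x_{\textbf m}^{(1)}=-m_2x_{\textbf m}^{(2)}$ if $m_1m_2\neq0$. To collapse these to constants I would use \eqref{4.5}: taking $\textbf m=(m_1,0)$ and $\textbf n=(0,n_2)$ with $m_1,n_2\neq0$ forces each one-variable $h$ to have degree zero; taking $\textbf m$ with $m_1m_2\neq0$ against $\textbf n=(n_1,0)$ and then against $\textbf n=(0,n_2)$ kills $x_{\textbf m}^{(1)}$ and $x_{\textbf m}^{(2)}$; and taking $\textbf n=\textbf 0$, $\textbf m\neq\textbf 0$ in \eqref{4.6} gives $h_{\textbf 0}(\bm\partial-\textbf m)=h_{\textbf 0}(\bm\partial)$, so $\textbf m=(1,0)$ and $\textbf m=(0,1)$ together make $h_{\textbf 0}$ constant. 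Altogether $h_{\textbf m}(\bm\partial)=b_{\textbf m}\in\mathbb C$ for every $\textbf m\in\mathbb Z^2$.

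Each individual step is a short polynomial manipulation, so I do not expect any genuinely hard obstacle; what needs care is the case split in the second paragraph — each of the three shapes coming out of Lemma \ref{lemm3.0} has to be knocked down by a separate, well-chosen test pair $(\textbf m,\textbf n)$ — together with the mild bookkeeping at the transfer from Theorem \ref{th3.9}, where one must remember that the cited isomorphism is only at the level of $\mathcal L$-modules and therefore a priori carries the free generator of $V$ to some nonzero constant rather than to $1$ itself.
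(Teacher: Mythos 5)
Your proposal is correct and follows essentially the same route as the paper: read off $g_{\textbf{m}}$ from Theorem \ref{th3.9}, derive the translation invariance $h_{\textbf{m}}(\bm{\partial}-\textbf{m})=h_{\textbf{m}}(\bm{\partial})$ from \eqref{4.6} with $\textbf{n}=\textbf{m}$, apply Lemma \ref{lemm3.0} to constrain the shape of $h_{\textbf{m}}$, and then eliminate the remaining degrees of freedom with the same test pairs in \eqref{4.5} and \eqref{4.6}. The only difference is that you make explicit two small points the paper leaves implicit (the integral-domain cancellation of $g_{\textbf{m}}$ after noting $g_{\textbf{m}}(\bm{\partial}-\textbf{m})=g_{\textbf{m}}(\bm{\partial})$, and the normalization of the cyclic generator under the $\mathcal{L}$-module isomorphism), which is harmless.
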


\begin{lemm}
	For any $(0,0)\neq \textbf{m}\in\mathbb{Z}\times\mathbb{Z}$, there exists $k\in\mathbb{C}$ such that $b_{\textbf{m}}=\lambda_1^{m_1}\lambda_2^{m_2}k$.
	\begin{proof}
		Suppose $(\textbf{m}|\textbf{n}^{\bot})\neq0$ in (\ref{4.6}), then we get
			$\lambda_1^{m_1}\lambda_2^{m_2}b_{\textbf{n}}=b_{\textbf{m}+\textbf{n}}$.

	If $m_1\neq0$, we have $b_{(m_1,m_2)}=\lambda_1^{m_1}\lambda_2^{m_2-1}b_{(0,1)}$; if $m_2\neq0$, we have $b_{(m_1,m_2)}=\lambda_1^{m_1-1}\lambda_2^{m_2}b_{(1,0)}$. Write $k=\lambda_2^{-1}b_{(0,1)}=\lambda_1^{-1}b_{(1,0)}$, then
	\begin{center}
		$b_\textbf{m}=\lambda_1^{m_1}\lambda_2^{m_2}k$ if $\textbf{m}\neq\textbf{0}.$
	\end{center}
This completes the proof.
	\end{proof}
\end{lemm}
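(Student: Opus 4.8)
The plan is to lean on the preceding lemma, which in the case $\textbf{p}=(0,0)$ tells us that every $h_{\textbf{n}}(\bm{\partial})$ is a constant $b_{\textbf{n}}\in\mathbb{C}$ while $g_{\textbf{m}}(\bm{\partial})=\lambda_1^{m_1}\lambda_2^{m_2}\big(m_2(\partial_1+\beta_1)-m_1(\partial_2+\beta_2)\big)$. First I would substitute these into the compatibility relation \eqref{4.6}. Since the $h$-factors are now constants, the shifts on them disappear and \eqref{4.6} collapses to
\[ b_{\textbf{n}}\big(g_{\textbf{m}}(\bm{\partial})-g_{\textbf{m}}(\bm{\partial}-\textbf{n})\big)=|\textbf{n},\textbf{m}|\,b_{\textbf{m}+\textbf{n}}. \]
A one-line computation with the explicit form of $g_{\textbf{m}}$ gives $g_{\textbf{m}}(\bm{\partial})-g_{\textbf{m}}(\bm{\partial}-\textbf{n})=\lambda_1^{m_1}\lambda_2^{m_2}(n_1m_2-n_2m_1)=\lambda_1^{m_1}\lambda_2^{m_2}\,|\textbf{n},\textbf{m}|$, which is again a constant. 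Hence, whenever $|\textbf{n},\textbf{m}|\neq0$, equivalently $(\textbf{m}|\textbf{n}^{\bot})\neq0$, the common factor $|\textbf{n},\textbf{m}|$ cancels and we are left with the recursion $b_{\textbf{m}+\textbf{n}}=\lambda_1^{m_1}\lambda_2^{m_2}b_{\textbf{n}}$.

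The second step is to feed well-chosen pairs into this recursion so as to reach every nonzero index from the two basic ones $(1,0)$ and $(0,1)$. For $\textbf{m}=(m_1,m_2)$ with $m_1\neq0$ I would apply the recursion with the vector $(m_1,m_2-1)$ playing the role of $\textbf{m}$ and $\textbf{n}=(0,1)$ (the relevant determinant is $-m_1\neq0$), obtaining $b_{(m_1,m_2)}=\lambda_1^{m_1}\lambda_2^{m_2-1}b_{(0,1)}$; symmetrically, for $m_2\neq0$ I would take $(m_1-1,m_2)$ in the role of $\textbf{m}$ and $\textbf{n}=(1,0)$, obtaining $b_{(m_1,m_2)}=\lambda_1^{m_1-1}\lambda_2^{m_2}b_{(1,0)}$.

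The last step is the normalization. Comparing the two formulas on any $\textbf{m}$ with $m_1m_2\neq0$ and using that $\lambda_1,\lambda_2\in\mathbb{C}^{\times}$ forces $\lambda_2^{-1}b_{(0,1)}=\lambda_1^{-1}b_{(1,0)}$; I set $k$ to be this common value, so that $b_{(0,1)}=\lambda_2k$ and $b_{(1,0)}=\lambda_1k$. Since every nonzero $\textbf{m}$ has $m_1\neq0$ or $m_2\neq0$, one of the two formulas applies and, after inserting $b_{(0,1)}=\lambda_2k$ or $b_{(1,0)}=\lambda_1k$, yields $b_{\textbf{m}}=\lambda_1^{m_1}\lambda_2^{m_2}k$, as claimed. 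I do not anticipate a genuine obstacle here: the only point that needs care is the collapse of \eqref{4.6} into a scalar identity, which works precisely because the explicit $g_{\textbf{m}}$ is affine in $\bm{\partial}$ so that its first difference along $\textbf{n}$ is the constant $\lambda_1^{m_1}\lambda_2^{m_2}|\textbf{n},\textbf{m}|$; once that cancellation is available, invertibility of the $\lambda_i$ makes $k$ well defined and the remaining argument is just bookkeeping over the two coordinate cases.
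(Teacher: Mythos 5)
Your proposal is correct and follows essentially the same route as the paper: specialize \eqref{4.6} with constant $h$'s and the explicit affine $g_{\textbf{m}}$ to get the recursion $b_{\textbf{m}+\textbf{n}}=\lambda_1^{m_1}\lambda_2^{m_2}b_{\textbf{n}}$ whenever $(\textbf{m}|\textbf{n}^{\bot})\neq0$, then reach every nonzero index from $(1,0)$ and $(0,1)$ and normalize $k=\lambda_2^{-1}b_{(0,1)}=\lambda_1^{-1}b_{(1,0)}$. The only difference is that you spell out the cancellation of the first difference of $g_{\textbf{m}}$ and the consistency check at indices with $m_1m_2\neq0$, which the paper leaves implicit.
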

In summary, we have the following result.
\begin{theo}
	Suppose $V$ is an $L(0,0)$-module or an $\widetilde{L}(0,0)$-module and is free of rank one
when restricted to $U(\mathfrak{h})$, then $V\cong\Omega(\bm{\lambda},\bm{\beta},b_{\textbf{0}},k)$ for some $\bm{\lambda}\in\mathbb{C}^{\times}\times\mathbb{C}^{\times},$ $\bm{\beta}\in\mathbb{C}\times\mathbb{C},b_{\textbf{0}},k\in\mathbb{C}$ with the module structure given by:
	\begin{equation}
		E(\textbf{m})f(\partial_1,\partial_2)=\lambda_1^{m_1}\lambda_2^{m_2}(m_2(\partial_1+\beta_1)-m_1(\partial_2+\beta_2))f(\partial_1-m_1,\partial_2-m_2),
	\end{equation}
	\begin{equation}
			t^\textbf{m}f(\partial_1,\partial_2)=
	\left\{\begin{array}{l}\lambda_1^{m_1}\lambda_2^{m_2}kf(\partial_1-m_1,\partial_2-m_2),~~\mbox{if}~~\textbf{m}\neq\textbf{0},
		\\b_{\textbf{0}}f(\partial_1,\partial_2),~~\mbox{if}~~\textbf{m}=\textbf{0},
	\end{array}\right.
	\end{equation}
and $\partial_1,\partial_2$ act as left multiplication.
\end{theo}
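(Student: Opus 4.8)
The strategy is to observe that every piece of data defining $V$ has already been pinned down by the lemmas of this subsection, and then to verify that the obvious candidate map is an isomorphism. So let $V$ be as in the statement; since $\mathfrak{h}=\mathbb{C}\partial_1\oplus\mathbb{C}\partial_2$ acts, $V$ is in particular an $\widetilde{L}(0,0)$-module, an isomorphism of $\widetilde{L}(0,0)$-modules is a fortiori one of $L(0,0)$-modules, and $\Omega(\bm{\lambda},\bm{\beta},b_{\textbf{0}},k)$ is a module for both. First I would pick a free generator $1\in V$, so $V=\mathbb{C}[\partial_1,\partial_2]\cdot 1$, and write the action of the generators as in \eqref{a:act-E}. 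By Lemma \ref{lemm3.1} (with $\textbf{p}=\textbf{0}$) the action on a general element is $E(\textbf{m})f(\bm{\partial})\cdot 1=g_{\textbf{m}}(\bm{\partial})f(\bm{\partial}-\textbf{m})\cdot 1$ and $t^{\textbf{m}}f(\bm{\partial})\cdot 1=h_{\textbf{m}}(\bm{\partial})f(\bm{\partial}-\textbf{m})\cdot 1$, so that the module structure is completely encoded in the families $\{g_{\textbf{m}}\}$ and $\{h_{\textbf{m}}\}$, which are constrained by \eqref{4.5}--\eqref{4.4}.

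Next I would feed in the results already established. Restricting $V$ to the subalgebra generated by the $E(\textbf{m})$ together with $\partial_1,\partial_2$ gives a copy of the Virasoro-like algebra $\mathcal{L}$ (Remark \ref{label4}) acting freely of rank one over $U(\mathfrak{h})$; hence Theorem \ref{th3.9} forces $g_{\textbf{m}}(\bm{\partial})=\lambda_1^{m_1}\lambda_2^{m_2}\big(m_2(\partial_1+\beta_1)-m_1(\partial_2+\beta_2)\big)$ for some $\bm{\lambda}\in\mathbb{C}^{\times}\times\mathbb{C}^{\times}$ and $\bm{\beta}\in\mathbb{C}\times\mathbb{C}$. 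The two preceding lemmas of this subsection then finish the job: the first gives $h_{\textbf{m}}(\bm{\partial})=b_{\textbf{m}}\in\mathbb{C}$ for every $\textbf{m}$, and the second produces $k\in\mathbb{C}$ with $b_{\textbf{m}}=\lambda_1^{m_1}\lambda_2^{m_2}k$ for all $\textbf{m}\neq\textbf{0}$; I then set $b_{\textbf{0}}:=h_{\textbf{0}}(\bm{\partial})\in\mathbb{C}$. Substituting these expressions back into the formulas above reproduces exactly \eqref{3.38} and the two-branch formula \eqref{equ2.6} that define $\Omega(\bm{\lambda},\bm{\beta},b_{\textbf{0}},k)$.

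Finally I would exhibit the map $\Phi\colon V\to\Omega(\bm{\lambda},\bm{\beta},b_{\textbf{0}},k)$, $f(\bm{\partial})\cdot 1\mapsto f(\bm{\partial})$: it is a linear bijection by construction, it intertwines the $\partial_i$-action since both sides are left multiplication, and it intertwines the $E(\textbf{m})$- and $t^{\textbf{m}}$-actions by the displayed formulas, which now agree on the two sides; hence $\Phi$ is an isomorphism of $\widetilde{L}(0,0)$-modules. I do not expect a genuine obstacle here, since all of the analytic content --- identifying the $g_{\textbf{m}}$ from \eqref{4.4}, proving the $h_{\textbf{m}}$ constant from \eqref{4.5}--\eqref{4.6}, and establishing the scaling $b_{\textbf{m}}=\lambda_1^{m_1}\lambda_2^{m_2}k$ --- has already been carried out. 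The one point deserving a line of care is that the mixed relations \eqref{4.5}--\eqref{4.6}, coming from $[t^{\textbf{m}},t^{\textbf{n}}]$ and $[t^{\textbf{m}},E(\textbf{n})]$, impose no constraint beyond what has been used; one checks this directly by substituting the now-known $g_{\textbf{m}}$ and $h_{\textbf{m}}$ back in, so the theorem is simply the packaging of the preceding lemmas.
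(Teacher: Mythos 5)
Your proposal is correct and follows essentially the same route as the paper: the paper also presents this theorem as a direct summary of Theorem \ref{th3.9} (pinning down $g_{\textbf{m}}$ via the $\mathcal{L}$-restriction) together with the two preceding lemmas (showing $h_{\textbf{m}}=b_{\textbf{m}}\in\mathbb{C}$ and $b_{\textbf{m}}=\lambda_1^{m_1}\lambda_2^{m_2}k$ for $\textbf{m}\neq\textbf{0}$), with Lemma \ref{lemm3.1} converting the action on $1$ into the action on all of $\mathbb{C}[\partial_1,\partial_2]$. Your explicit check that the remaining relations impose no further constraint and your construction of the intertwiner $\Phi$ are just the (omitted) packaging step the paper leaves implicit.
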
\par
Consequently, we obtain the main result of this paper:
\begin{theo}
	Let $\textbf{p}\in\mathbb{C}\times\mathbb{C}$. Suppose $V$ is an $L(\textbf{p})$-module or an $\widetilde{L}(\textbf{p})$-module with free rank one action of the Cartan subalgebra
	$U(\mathfrak{h})$.
	
	$(1)$ If $\textbf{p}\neq(0,0)$, then $V\cong\Omega(\bm{\lambda},\alpha,b_{\textbf{0}})$ for some $\bm{\lambda}\in\mathbb{C}^{\times}\times\mathbb{C}^{\times},$ $\alpha\in\mathbb{C},b_{\textbf{0}}\in\mathbb{C}$ with
	the module action: 
	\begin{equation}\label{3.29}
		E(\textbf{m})f(\partial_1,\partial_2)=\lambda_1^{m_1}\lambda_2^{m_2}[(m_2+p_2)(\partial_1+p_1\alpha)-(m_1+p_1)(\partial_2+p_2\alpha)]f(\bm{\partial}-\textbf{m}),
	\end{equation}
	\begin{equation}\label{3.30}
		t^\textbf{m}f(\bm{\partial})=
		\lambda_1^{m_1}\lambda_2^{m_2}b_{\textbf{0}}f(\bm{\partial}-\textbf{m}-\textbf{p}),
	\end{equation}
	and $\partial_1,\partial_2$ act as left multiplication. In particular, when $b_{\textbf{0}}=0$, the above module is a $\mathcal{B}(\textbf{p})$-module or $\widetilde{\mathcal B}(\textbf{p})$-module.\par
	$(2)$ If $\textbf{p}=(0,0)$, then $V\cong\Omega(\bm{\lambda},\bm{\beta},b_{\textbf{0}},k)$ for some $\bm{\lambda}\in\mathbb{C}^{\times}\times\mathbb{C}^{\times},$ $\bm{\beta}\in\mathbb{C}\times\mathbb{C},b_{\textbf{0}},k\in\mathbb{C}$ with the module action:
	\begin{equation}\label{3.29}
		E(\textbf{m})f(\bm{\partial})=\lambda_1^{m_1}\lambda_2^{m_2}(m_2(\partial_1+\beta_1)-m_1(\partial_2+\beta_2))f(\bm{\partial}-\textbf{m}),
	\end{equation}
	\begin{equation}\label{3.30}
		t^\textbf{m}f(\bm{\partial})=
		\left\{\begin{array}{l}\lambda_1^{m_1}\lambda_2^{m_2}kf(\bm{\partial}-\textbf{m}),~~\mbox{if}~~\textbf{m}\neq\textbf{0},
			\\b_{\textbf{0}}f(\bm{\partial}),~~\mbox{if}~~\textbf{m}=\textbf{0},
		\end{array}\right.
	\end{equation}
	where $\partial_1,\partial_2$ act as left multiplication. In particular, when $b_{\textbf{0}}=k=0$, the above module is an $\mathcal{L}$-module.
\end{theo}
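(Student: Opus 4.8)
The plan is to determine the two polynomials $g_{\textbf{m}}(\bm{\partial})$ and $h_{\textbf{m}}(\bm{\partial})$ of \eqref{a:act-E}, which record the action of $E(\textbf{m})$ and $t^{\textbf{m}}$ on a fixed cyclic generator $1\in V$, and then to recognize the resulting module among the $\Omega$'s constructed in Section 2. Once $g_{\textbf{m}}$ and $h_{\textbf{m}}$ are known, Lemma \ref{lemm3.1} gives the action on an arbitrary element, namely $E(\textbf{m})f(\bm{\partial})\cdot 1=g_{\textbf{m}}(\bm{\partial})f(\bm{\partial}-\textbf{m})\cdot 1$ and $t^{\textbf{m}}f(\bm{\partial})\cdot 1=h_{\textbf{m}}(\bm{\partial})f(\bm{\partial}-\textbf{m}-\textbf{p})\cdot 1$, so the whole $\widetilde{L}(\textbf{p})$-module structure is pinned down after the identification $V\cong\mathbb{C}[\partial_1,\partial_2]$, $f(\bm{\partial})\cdot 1\mapsto f$. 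Since $L(\textbf{p})\subseteq\widetilde{L}(\textbf{p})$, it suffices to treat the $\widetilde{L}(\textbf{p})$ case, and in fact the determination of $g_{\textbf{m}},h_{\textbf{m}}$ has already been carried out: for $\textbf{p}\neq(0,0)$ by Theorem \ref{th3.2} and Lemmas \ref{a:bm}, \ref{lemm3.4}, \ref{lemm3.6}, \ref{lemm3.7} (summarized in the concluding theorem of Subsection 3.1), and for $\textbf{p}=(0,0)$ by Theorem \ref{th3.9} together with the two lemmas of Subsection 3.2. What remains is therefore only to rewrite these formulas in the normalized shape of \eqref{2.19}--\eqref{2.20} and \eqref{4.23}--\eqref{equ2.6} and to record the two degenerate cases.

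For part $(1)$, the cited results give $\bm{\lambda}\in\mathbb{C}^{\times}\times\mathbb{C}^{\times}$, $\alpha\in\mathbb{C}$ and $b_{\textbf{0}}\in\mathbb{C}$ with $h_{\textbf{m}}(\bm{\partial})=\lambda_1^{m_1}\lambda_2^{m_2}b_{\textbf{0}}$ for all $\textbf{m}$ (using Lemma \ref{lemm3.6} for the normalization $b_{\textbf{m}}=\lambda_1^{m_1}\lambda_2^{m_2}b_{\textbf{0}}$), and $g_{\textbf{m}}(\bm{\partial})=\lambda_1^{m_1}\lambda_2^{m_2}(X_{\textbf{p}}-\Delta_{\textbf{m}}\alpha+X_{\textbf{m}})$ both when $\Delta_{\textbf{m}}\neq0$ (Theorem \ref{th3.2}) and when $\Delta_{\textbf{m}}=0$ (Lemma \ref{lemm3.7}, where the middle term vanishes anyway). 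The one identity to verify is
\[
X_{\textbf{p}}-\Delta_{\textbf{m}}\alpha+X_{\textbf{m}}=(m_2+p_2)(\partial_1+p_1\alpha)-(m_1+p_1)(\partial_2+p_2\alpha),
\]
which follows by expanding the right-hand side and using $X_{\textbf{p}}=p_2\partial_1-p_1\partial_2$, $X_{\textbf{m}}=m_2\partial_1-m_1\partial_2$, $\Delta_{\textbf{m}}=p_2m_1-p_1m_2$. Comparing with \eqref{2.19}--\eqref{2.20} gives $V\cong\Omega(\bm{\lambda},\alpha,b_{\textbf{0}})$; and if $b_{\textbf{0}}=0$ then every $t^{\textbf{m}}$ acts as $0$, so the action factors through $\widetilde{\mathcal B}(\textbf{p})$ (resp. $\mathcal B(\textbf{p})$ for the $L(\textbf{p})$ case), which is the last clause of $(1)$.

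For part $(2)$, the argument is parallel with $\mathcal L=\widetilde{\mathcal B}(0,0)$ in place of $\mathcal B(\textbf{p})$: Theorem \ref{th3.9} gives $g_{\textbf{m}}(\bm{\partial})=\lambda_1^{m_1}\lambda_2^{m_2}(m_2(\partial_1+\beta_1)-m_1(\partial_2+\beta_2))$, and the lemmas of Subsection 3.2 give $h_{\textbf{m}}(\bm{\partial})=b_{\textbf{m}}\in\mathbb{C}$ with $b_{\textbf{m}}=\lambda_1^{m_1}\lambda_2^{m_2}k$ for $\textbf{m}\neq\textbf{0}$, while $b_{\textbf{0}}$ is left free because $g_{\textbf{0}}(\bm{\partial})=0$ and $|\textbf{n},\textbf{0}|=0$ make the relations \eqref{4.5}--\eqref{4.6} with $\textbf{m}=\textbf{0}$ or $\textbf{n}=\textbf{0}$ vacuous. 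Comparison with \eqref{4.23}--\eqref{equ2.6} gives $V\cong\Omega(\bm{\lambda},\bm{\beta},b_{\textbf{0}},k)$, and the case $b_{\textbf{0}}=k=0$ makes all $t^{\textbf{m}}$ act as $0$, so $V$ becomes an $\mathcal L$-module.

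Since every functional equation in \eqref{3.5}--\eqref{3.4} has already been solved in the preceding lemmas --- including the subtle points that $h_{\textbf{m}}$ is constant even when $\Delta_{\textbf{m}}=0$ and that the exceptional index $\textbf{m}=-2\textbf{p}$ is harmless --- the remaining work for this theorem is purely organizational, and there is no genuine obstacle left. The only step deserving attention is the bookkeeping of parameters: checking the displayed identity above, confirming that the normalization $b_{\textbf{m}}=\lambda_1^{m_1}\lambda_2^{m_2}b_{\textbf{0}}$ from Lemma \ref{lemm3.6} is exactly what makes \eqref{2.20} hold, and, in case $(2)$, noting that the value $b_{\textbf{0}}=t^{\textbf{0}}\cdot 1$ genuinely decouples from the constant $k$ governing $t^{\textbf{m}}$ for $\textbf{m}\neq\textbf{0}$.
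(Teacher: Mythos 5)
Your proposal is correct and follows exactly the route the paper intends: the theorem is stated there as a direct consequence of the concluding theorems of Subsections 3.1 and 3.2, and your assembly of Theorem \ref{th3.2}, Lemmas \ref{a:bm}--\ref{lemm3.7}, Theorem \ref{th3.9} and the two lemmas of Subsection 3.2, together with the verification that $X_{\textbf{p}}-\Delta_{\textbf{m}}\alpha+X_{\textbf{m}}=(m_2+p_2)(\partial_1+p_1\alpha)-(m_1+p_1)(\partial_2+p_2\alpha)$, is precisely the missing bookkeeping. The remarks on the degenerate cases $b_{\textbf{0}}=0$ (resp.\ $b_{\textbf{0}}=k=0$) and on the decoupling of $b_{\textbf{0}}$ from $k$ in part (2) are also accurate.
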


\vskip30pt
\noindent {\bf Data availability statement}: All data are included in the manuscript.

	\vskip30pt \centerline{\bf ACKNOWLEDGMENT}

The research is partially supported by the Simons Foundation MP-TSM-00002518,
NSFC grants (Nos. 12171303; 12071276, 11931009, 12226402; 12271332) and NSF of Shanghai Municipality grant no. 22ZR142 4600.

	\vskip8pt
	
\end{document}